\documentclass[10pt]{amsart}

\usepackage{amsmath,amssymb,amscd,amsthm,latexsym,graphics,enumerate,engord}
\usepackage[all]{xy}
\usepackage{pstricks-add}

\theoremstyle{plain}
    \newtheorem{thm}{Theorem}[section]
    \newtheorem{lem}[thm]   {Lemma}
    \newtheorem{cor}[thm]   {Corollary}
    \newtheorem{prop}[thm]  {Proposition}

\theoremstyle{definition}
    \newtheorem{defn}[thm]  {Definition}
    
    \newtheorem{probs}[thm] {Problems}

    \newtheorem{ex}[thm]{Example}
    \newtheorem{exs}[thm]{Examples}
    \newtheorem{rem}[thm]{Remark}
    \newtheorem{rems}[thm]{Remarks}

\makeatletter
\newcommand{\pushright}[1]{\ifmeasuring@#1\else\omit\hfill$\displaystyle#1$\fi\ignorespaces}
\newcommand{\pushleft}[1]{\ifmeasuring@#1\else\omit$\displaystyle#1$\hfill\fi\ignorespaces}
\makeatother

\def\disfrac#1#2{{\displaystyle{\frac{#1}{#2}  }}}

\def\R{{\mathbb{R}}}

\def\Z{{\mathbb{Z}}}
\def\A{{\mathcal{A}}}

\def\cat{{\rm{cat}\hskip1pt}}
\def\secat{{\rm{secat}\hskip1pt}}

\def\TC{{\rm{TC}\hskip1pt}}
\def\hdim{{\rm{hdim}\hskip1pt}}

\def\red{}
\def\blue{}

\title[Hopf invariants for sectional category]{Hopf invariants for sectional category with applications to topological robotics}

\author{Jes\'us Gonz\'alez\textsuperscript{\dag}}
\author{Mark Grant}
\author{Lucile Vandembroucq\textsuperscript{\ddag}}
\thanks{\textsuperscript{\dag}~~Partially supported by Conacyt Research Grant 221221.}
\thanks{\textsuperscript{\ddag}~~Partially supported by the Research Centre of Mathematics of the University of Minho with the Portuguese Funds from the ``Funda\c c\~ao para a Ci\^encia e a Tecnologia'', through the Project PEstOE/MAT/UI0013/2014.}

\address{Departamento de Matem\'aticas, Centro de Investigaci\'on y de Estudios Avanzados del IPN, Av.~IPN 2508, Zacatenco, M\'exico City 07000, M\'exico}
\email{jesus@math.cinvestav.mx}

\address{Department of Mathematical Sciences, University of Aberdeen, Fraser Noble Building, Meston Walk, Aberdeen AB24 3UE, UK}
\email{mark.grant@abdn.ac.uk}

\address{Universidade do Minho, Centro de Matem\'atica, Campus de Gualtar, 4710-057 Braga, Portugal}
\email{lucile@math.uminho.pt}
\begin{document}

\begin{abstract}
We develop a theory of generalized Hopf invariants in the setting of sectional category. In particular we show how Hopf invariants for a product of fibrations can be identified as shuffle joins of Hopf invariants for the factors. Our results are applied to the study of Farber's topological complexity for 2-cell complexes, as well as to the construction of a counterexample to the analogue for topological complexity of Ganea's conjecture on Lusternik-Schnirelmann category.
\end{abstract}

\maketitle

\noindent \textit{MSC 2010:} 55M30, 55Q25, 55S36, 68T40, 70B15.

\noindent \textit{Keywords:} Sectional category, topological complexity, generalized Hopf invariants, two-cell complexes, Ganea conjecture, join and shuffle maps.

\section{Introduction}

The sectional category of a fibration is the least number of open sets needed to cover the base, on each of which the fibration admits a continuous local section. This concept, originally studied by A.\ S.\ Schwarz \cite{Schwarz} under the name \emph{genus}, has found applications in diverse areas. Notable special cases include the Lusternik--Schnirelmann category (for which the standard reference has become the monograph \cite{CLOT} by Cornea, Lupton, Oprea and Tanr\'e) and Farber's topological complexity \cite{Far}, both of which are homotopy invariants of spaces which arise as the sectional category of associated path fibrations. The LS-category is classical and related to critical point theory, while topological complexity was conceived in the early part of the twenty-first century as part of a topological approach to the motion planning problem in Robotics. Further details and definitions will be given in Section \ref{secat} below. We remark that in the modern literature it is common to normalise these invariants so that the sectional category of a fibration with section is zero, a convention which we will adopt in this paper.

Most of the existing estimates for sectional category are cohomological in nature and are based on obstruction theory. The objective of the current paper is to produce more refined estimates using methods from unstable homotopy theory. There is an extensive literature on the application of generalized Hopf invariants to LS-category, originating with Berstein and Hilton \cite{B-H} and including spectacular applications by Iwase \cite{Iwase}, Stanley \cite{Stanley}, Strom \cite{Strom} and others (a nice summary can be found in Chapter 6 of \cite{CLOT}). Building on and generalizing the work of these authors, we develop a theory of generalized Hopf invariants in the setting of sectional category. \red{In very crude terms, Hopf invariants are homotopy classes which arise as obstructions for the sectional category of a map to increase by one upon adjunction of a cone on the base space. Although the explicit definition is based on general principles, the actual evaluation of such obstructions is a major problem in homotopy theory. A novel point in this paper is the development of an algebro-combinatorial method that allows us to get a reasonably sharp homological control of some of these obstructions. In fact, in many cases, Hopf invariants are carried by bottom cells, thus their evaluation can effectively be done in homological terms. The general idea is explained in Subsection~\ref{HopfProducts}, and full details supporting the theory are developed in Section~\ref{secsiete}. In particular, the method allows us to perform many} new computations of topological complexity which we believe would not be possible using \red{classic} obstruction-theoretic arguments.

Our first application is to the computation of the topological complexity of two-cell complexes $X=S^p\cup_\alpha e^{q+1}$. The LS-category of such a space $X$ is determined by the Berstein--Hilton--Hopf invariant
\[
H(\alpha)\in \pi_q(\Sigma\Omega S^p \wedge \Omega S^p) \cong \pi_q (S^{2p-1}\vee S^{3p-2}\vee\cdots )
\]
of the attaching map $\alpha: S^q\to S^p$ \cite{B-H}. When $p\ge2$, we have
\[
\cat(X) = \left\{\begin{array}{ll} 1 & \mbox{if }H(\alpha)=0, \\ 2 & \mbox{if }H(\alpha)\neq 0.
\end{array}\right.
 \]
 Note that in the metastable range $2p-1\le q\le 3p-3$ we may identify $H(\alpha)$ with its projection onto the bottom cell $H_0(\alpha) \in \pi_q(S^{2p-1})$. If $H_0(\alpha)\neq 0$ then $\cat(X) =2$, which by standard inequalities implies that $2\le \TC(X)\le 4$. In almost all cases, the usual cohomological bounds fail to determine the exact value of $\TC(X)$, for reasons of dimension. Using Hopf invariants, however, we are able to identify many cases with $\TC(X)\le 3$ (in Theorem~\ref{le3bis} below, where we use the symbol $\circledast$ to denote the join functor) as well as many cases with $\TC(X)\ge 3$ (in Theorem~\ref{ge3bis} below).

 \begin{thm}[Theorem \ref{le3}]\label{le3bis}
Let $X=S^p\cup_\alpha e^{q+1}$, where $\alpha: S^q\to S^p$ is in the metastable range $2p-1< q\le 3p-3$ and $H_0(\alpha)\neq 0$.
Then $\TC(X)\le 3$ if and only if $\hspace{.2mm}(4+2(-1)^p)H_0(\alpha)\circledast H_0(\alpha)=0$.
\end{thm}

\begin{thm}[Theorem \ref{ge3}]\label{ge3bis}
Let $X=S^p\cup_\alpha e^{q+1}$, where $\alpha: S^q\to S^p$ is in the metastable range $2p-1< q\le 3p-3$ and $H_0(\alpha)\neq 0$.
Then $\TC(X)\ge 3$ provided $(2+(-1)^p)H_0(\alpha)\neq0$.
\end{thm}

Note that the condition $(2+(-1)^p)H_0(\alpha)\neq0$ in Theorem~\ref{ge3bis} holds automatically if $p$ is odd. On the other hand, we remark right after the proof of Theorem~\ref{le3} that the condition $(4+2(-1)^p)H_0(\alpha)\circledast H_0(\alpha)=0$ in Theorem~\ref{le3bis} holds if $q$ is even.

Combining these two theorems gives the precise value $\TC(X)=3$ for large classes of two-cell complexes (see for instance Corollaries~\ref{partiuno} and~\ref{partidos}).
We are also able to draw conclusions about $\TC(X)$ outside of the metastable range, under the additional assumption $H(\alpha) = H_0(\alpha)$.

\begin{ex}\label{unocontc4}
If $p$ is odd, $2p-1<q\leq3p-3$, and the join square $H_0(\alpha)\circledast H_0(\alpha)$ is a non-trivial element of odd torsion, then $\TC(X)=4$.
\end{ex}

\begin{rem}
We also get a full description of $\TC(X)$ for $X=S^p\cup_\alpha e^{2p}$ (Theorems~\ref{grados} and~\ref{ClassicHopf} below). The proofs are, however, much more elementary than those in the cases of Theorems~\ref{le3bis} and~\ref{ge3bis}.
\end{rem}

Our second application is to the analogue of Ganea's conjecture for topological complexity. Recall that the product inequality $\cat(X\times Y)\le \cat(X) + \cat(Y)$ is satisfied by LS-category. Examples of strict inequality were given by Fox \cite{Fox}, involving Moore spaces with torsion at different primes. Ganea asked, in his famous list of problems \cite{Ganea1}, if we always get equality when one of the spaces involved is a sphere. That is, if $X$ is a finite complex, is it true that
 \[
 \cat(X\times S^k) = \cat(X) + 1 \qquad \mbox{for all $k\ge1$?}
 \]
 A positive answer became known as \emph{Ganea's conjecture}. The conjecture remained open for nearly $30$ years, shaping research in the subject. It was shown to hold for simply-connected rational spaces by work of Jessup~\cite{Jessup} and Hess~\cite{Hess}, and for large classes of manifolds by Rudyak~\cite{Rudyak}, Singhof~\cite{Singhof}, and Strom~\cite{strom-ls-on-manifolds}, until eventually proven to be false in general by Iwase \cite{Iwase,IwaseAinfty}. Iwase's counter-examples are two-cell complexes $X$ outside of the metastable range, whose Berstein--Hilton--Hopf invariants are essential but stably inessential, from which it follows that $\cat(X)=\cat(X\times S^k) =2$.

 The analogous question for topological complexity (which also satisfies the product inequality)
 asks whether, for any finite complex $X$ and $k\ge 1$, we always have an equality
\begin{equation}\label{TCGanea}
\TC(X\times S^k) = \TC(X) + \TC(S^k) = \left\{\begin{array}{ll} \TC(X) + 1 & \mbox{if $k$ odd,}\\ \TC(X) + 2 &\mbox{if $k$ even.} \end{array}\right.
\end{equation}
This question was raised by Jessup, Murillo and Parent \cite{JMP}, who proved that equation (\ref{TCGanea}) holds when $k\ge2$ for any formal, simply-connected rational complex $X$ of finite type. In this paper, we give a counter-example to (\ref{TCGanea}) for all even $k$, using Hopf invariant techniques.

\begin{thm}[Theorem \ref{noGaneaTC}]\label{noGaneaTCbis}
Let $Y$ be the stunted real projective space $\R P^6/\R P^2$, and let $X=Y\vee Y$. Then for all $k\ge2$ even,
\[
\TC(X)=4\quad\mbox{and}\quad \TC(X\times S^{k}) = 5.
\]
\end{thm}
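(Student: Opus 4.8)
The plan is to compute $\TC(X)$ and $\TC(X\times S^k)$ separately, in each case bounding above via a Hopf-invariant vanishing argument and bounding below via a cohomological (zero-divisor cup-length) argument, and then to observe that the two computations do not agree with the Ganea formula \eqref{TCGanea}.

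First I would analyze $Y=\R P^6/\R P^2$. As a CW complex it is built from cells in dimensions $3,4,5,6$, and it is (up to dimension considerations) a suspension of the stunted projective space $\R P^5/\R P^2 = \Sigma^{?}$-type object — more precisely $Y$ is a three-cell complex whose attaching maps are given by the relevant stable Hopf-type maps (the $\eta$ and the degree-two maps detected by Steenrod squares $Sq^1,Sq^2$ in $H^*(\R P^6/\R P^2;\Z/2)$). The key structural facts I would extract are: (i) $Y$ has nontrivial $Sq^2$ from $H^3$ to $H^5$ and from $H^4$ to $H^6$, so that $X=Y\vee Y$ has a nontrivial cup product in $\widetilde H^*(X\times X)$ after using the external products of these $Sq^2$-related classes together with the standard zero-divisor classes $\bar u = u\otimes 1 - 1\otimes u$; and (ii) the relevant Berstein--Hilton--Hopf invariants of the attaching maps, which by the machinery of the paper control $\secat$ of the fibrations $p_n\colon G_n(X)\to X$ (for $\TC$, the relevant fibrations being those defining $\TC$), can be made to vanish or not vanish. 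The lower bound $\TC(X)\ge 4$ should come from exhibiting a product of four zero-divisors in $H^*(X\times X;\Z/2)$ that is nonzero — using that $X=Y\vee Y$ doubles the available cohomology and that the $Sq^2$'s give extra multiplicative room (this is the same mechanism by which $\mathrm{wedge}$-ing forces cup-length up). The upper bound $\TC(X)\le 4$ should come from a Hopf-invariant computation as developed in the body of the paper: the obstruction to $\TC(X)\le 4$ is a Hopf invariant lying in a homotopy group of a join, and one shows it vanishes, either directly or because the relevant stable Hopf invariant vanishes (the hallmark of Iwase-type examples: essential but stably inessential Hopf invariants), combined with a dimension/metastability count showing the unstable correction terms also vanish.

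Next I would treat $X\times S^k$ for $k\ge 2$ even. The lower bound $\TC(X\times S^k)\ge 5$: one expects $\TC(X\times S^k)\ge \TC(X)+\TC(S^k)-$ nothing, but in fact the subtlety is the reverse — the product \emph{inequality} only gives $\TC(X\times S^k)\le \TC(X)+\TC(S^k)=4+2=6$, so I need a genuine lower bound of $5$. This should come again from zero-divisor cup-length: in $H^*((X\times S^k)^2;\Z/2)$ we have the four zero-divisors coming from $X$ (whose product is nonzero, witnessing $\TC(X)\ge4$) and we would like one more from the $S^k$ factor. For $k$ even, $\TC(S^k)=2$ means there is a single zero-divisor $\bar v$ with $\bar v^2\ne 0$ in $H^*(S^k\times S^k)$; multiplying the degree-$4$ product from $X$ by one copy of $\bar v$ — and checking this stays nonzero by a Künneth argument, since the top class of $X^2$ times $\bar v$ is a nonzero class in the top of $(X\times S^k)^2$ of the appropriate dimension — gives a nonzero product of five zero-divisors, hence $\TC(X\times S^k)\ge 5$. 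The upper bound $\TC(X\times S^k)\le 5$: here I would \emph{not} expect to just invoke the product inequality (which only gives $\le 6$); instead the point of the paper's Hopf-invariant-for-products machinery — identifying Hopf invariants of a product of fibrations as shuffle joins of the Hopf invariants of the factors — is exactly what improves $6$ to $5$. Concretely, the obstruction to $\TC(X\times S^k)\le 5$ is a Hopf invariant for the product of the $\TC$-fibration of $X$ (whose relevant Hopf invariant we already know vanishes from the $\TC(X)\le4$ argument) with the $\TC$-fibration of $S^k$; by the shuffle-join formula this product Hopf invariant is a combination of shuffle joins involving the vanishing factor, hence vanishes.

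The main obstacle, I expect, is the upper bound $\TC(X\times S^k)\le 5$ — i.e., correctly setting up and applying the shuffle-join formula for Hopf invariants of products to conclude vanishing. One must be careful that the join/shuffle decomposition genuinely reduces the product Hopf invariant to terms each containing the (vanishing) $X$-Hopf invariant as a join-factor, with the remaining factors (coming from $S^k$) contributing only shuffle permutations and not an independent obstruction; this requires the metastable-range hypotheses to control that no extra unstable terms appear, and it is where the stable-inessentiality of the $Y$-attaching-map Hopf invariants (the Iwase phenomenon) is essential — stably the relevant join vanishes, and the dimension count promotes this to an unstable vanishing. A secondary technical point is verifying the cohomology ring of $\R P^6/\R P^2$ and of $Y\vee Y$ precisely enough (in particular the Steenrod module structure and the external products) to nail the cup-length lower bounds of $4$ and $5$ exactly rather than merely bounding them.
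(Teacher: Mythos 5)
Your general strategy (cohomological lower bounds, Hopf-invariant join upper bound) matches the paper's outline, but there is a crucial misconception at the heart of your upper-bound argument for $\TC(X\times S^k)\le 5$, and a smaller one for $\TC(X)\le 4$.

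For $\TC(X)\le 4$ you propose a Hopf-invariant computation relying on Iwase-type stable inessentiality. This is unnecessary and not what the paper does: $X$ is $2$-connected and $6$-dimensional, so the ordinary dimension/connectivity bound $\TC(X)\le \frac{2\cdot 6}{3}=4$ already gives the upper bound with no homotopy theory. For the lower bound the paper uses the ring monomorphism $\Z/2[u]/(u^3)\hookrightarrow H^*(Y;\Z/2)$ (so cup-length of $Y\times Y$ is $4$) together with the wedge inequality $\cat(Y\times Y)\le\TC(Y\vee Y)$; you allude to a direct zero-divisor argument, which the paper also notes works, but your emphasis on Steenrod operations is a red herring — only the cup product structure is used.

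The genuine gap is in your vanishing mechanism for the product Hopf invariant. You write that the relevant $X$-Hopf invariant ``vanishes from the $\TC(X)\le 4$ argument,'' and that vanishing propagates through the shuffle join. This is backwards. Writing $X\times X\simeq K\cup_\alpha CS$ with $K$ the $11$-skeleton and $S=\vee_{i=1}^4 S^{11}$, one has $\TC_K(X)=3$ and $\TC(X)=4$, so the level-$3$ Hopf invariant $H^3_{\phi,\alpha}(\pi_X)\in[S,F_3(X)]$ is precisely \emph{nonzero} — that is why $\TC(X)=4$ rather than $3$. What the paper proves (via the computation $[S,F_3(X)]\cong\bigoplus\Z/2$) is that this Hopf invariant has order $2$. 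Separately, Lemma~\ref{TCHopfspheres} shows the $\TC$-Hopf invariant of $S^k$ projects to a degree-$\pm(1+(-1)^k)$ map on the bottom cell, i.e.\ degree $\pm 2$ for $k$ even. Theorem~\ref{Hopfproducts} then says the level-$5$ Hopf obstruction for $\pi_X\times\pi_{S^k}$ factors through the join $H^3_{\phi,\alpha}(\pi_X)\circledast H^1_\beta(\pi_{S^k})$, and this join vanishes because it is (essentially) $2$-torsion joined with an even-degree map: $H\circledast(\pm 2\iota)=\pm 2(H\circledast\iota)=(\pm 2H)\circledast\iota=0$. So the mechanism is torsion-times-even-degree cancellation, not propagation of a vanishing $X$-invariant, and certainly not stable inessentiality in the Iwase sense; note also that the $S^k$-factor's even degree is exactly where the ``$k$ even'' hypothesis enters, which your proposal does not explain.
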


We now briefly outline the contents of each section, and in doing so indicate the method of proof of the above results. Section~\ref{seccionpreliminar} is preliminary, and establishes our conventions and notations regarding base points, cones, suspensions and joins. In Section~\ref{secat} we give the necessary background on sectional category and relative sectional category, working in the context of fibred joins. The main new result here is Proposition \ref{secatupbyone}, which shows that the sectional category of a fibration relative to a subspace increases by at most one on attaching a cone, and moreover the section over the cone can be controlled in a certain sense. Section~\ref{hifscsec} is split into two subsections. In Subsection~\ref{HopfDefns} we define the Hopf invariants for sectional category, which determine the behaviour of relative sectional category under cone attachments. We also recall the definition of the Berstein--Hilton--Hopf invariants mentioned above and show how they fit into our framework. In Subsection~\ref{HopfProducts} we investigate Hopf invariants for cartesian products of fibrations. Using naturality of the exterior join construction, we prove our Theorem \ref{Hopfproducts} which states that Hopf invariants of a product can be obtained as joins of Hopf invariants of the factors, composed with a topological shuffle map (see below). This result is germane to the proofs of Theorems~\ref{le3bis} and~\ref{ge3bis} and Theorem~\ref{noGaneaTCbis}, which we give in Sections~\ref{secaptc2cw} and~\ref{secapgc4tc} respectively. Finally, Section~\ref{secsiete} is the technical heart of the paper. Using the description of the join in terms of simplicial (barycentric) parameters, together with the standard decomposition of the product of simplices $\Delta^n\times \Delta^m$ into simplices $\Delta^{n+m}$, we construct \emph{topological shuffle maps}
\[
\Phi^{A,B}_{n,m}: J^n(A)\circledast J^m(B)\to J^{n+m+1}(A\times B),
\]
which map from the join of the $(n+1)$-fold join of a space $A$ with the $(m+1)$-fold join of a space $B$ to the $(n+m+2)$-fold join of their product $A\times B$. We then describe (in Proposition~\ref{shuffleinhomology}) the effect of this map in homology, in terms of algebraic shuffles. This result is used in all of our calculations of Hopf invariants of products.

The idea of applying Hopf invariant techniques to obtain estimates for sectional category has been around for some time. For example, the upper bounds for the (higher) topological complexity of configuration spaces given in \cite{FG2} and \cite{GonzalezGrant}, and proved using obstruction theory, were originally obtained using Hopf invariants. \red{However, the homological method in this paper to evaluate certain Hopf invariants is new and independent of previous approaches.}

In developing the ideas in this paper, we have benefitted from discussions with many people. In this regard, the {authors would like to thank Michael Farber, Pierre Ghienne, Hugo Rodr\'iguez-Ord\'o\~nez and Enrique Torres-Giese. The authors are also} grateful to the organizers of the ESF ACAT conference and workshop ``Applied Algebraic Topology'' held in Castro Urdiales, Spain, from June~26 to July~5, 2014, where the final form of this project was shaped.

We conclude this introductory section with a list of open problems, some of which may be accessible by extending the techniques in this paper.

\begin{probs}
\begin{enumerate}[(a)]
\item Do there exist two-cell complexes $X=S^p\cup_\alpha e^{q+1}$ with $q>2p-1$ for which $\cat(X)=\TC(X)=2$? (The smallest such open case is that of $\alpha\in\pi_4(S^2)=\mathbb{Z}/2$ the generator, see Example~\ref{contieneposiblegap}(a).) Or for which  $\TC(X)=4$? (c.f.~Example~\ref{unocontc4}.)
\item Does every two-cell complex satisfy equation  (\ref{TCGanea})? What if $X$ is restricted to lie in the metastable range? (Ganea's conjecture does hold true for the latter complexes.)
\item Do there exist finite complexes $X$ and \emph{odd} $k$ for which $\TC(X\times S^k) = \TC(X)$?
\end{enumerate}
\end{probs}

\section{Preliminaries}\label{seccionpreliminar}
We work in the category of well-pointed\footnote{Base points will generically be denoted by an asterisk $\ast$.} compactly generated spaces having the homotopy type of CW-complexes. Thus, all maps, diagrams, and homotopies will be pointed, unless explicitly noticed otherwise. For instance, a homotopy section of a map $p\colon \mathcal{A}\to X$ is a pointed map $s\colon X \to\mathcal{A}$ with a pointed homotopy between $p\circ s$ and the identity on $X$. Products and mapping spaces are topologized in such a way that the product of two proclusions is a proclusion and evaluation maps are continuous. Fibrations are assumed to be pointed fibrations in the sense that they lift pointed homotopies or, equivalently, that they admit a pointed lifting function. Likewise, cofibrations are assumed to be pointed cofibrations\footnote{These are not actual restrictions in view of~\cite[Theorem~5.95]{MR2839990}.}.

We let $I$ stand for the unit interval $[0,1]$ with base point 0. For a pointed space $(A,\ast)$, we denote by $CA$ the cone $A\times I/A\times 1$ {with base point so that the projection $A\times I\to CA$ and the inclusion} $A\hookrightarrow CA$, $a\mapsto [a,0]$ {are pointed} (in general, the class of $(a,u)$ will be denoted by $[a,u]$). Note that the inclusion $A\hookrightarrow CA$ is a cofibration. The suspension of $A$ is defined by $\Sigma A:=CA/A$. We will also use the reduced suspension of $A$ given by the quotient $\widetilde{\Sigma} A:=\Sigma A/\ast\times I=A\wedge (I/\partial I)$. In both cases we take the obvious base points. The class of $[a,u]$ in both $\Sigma A$ and $\widetilde{\Sigma} A$ will also be denoted by $[a,u]$.

We denote by $\Delta^n$ the standard $n$-simplex of $\R^{n+1}$, given by
$$\Delta^n=\{(t_0,\ldots,t_{n})\in [0,1]^{n+1}\mid t_0+\cdots + t_{n}=1\}$$
with base point $(1,0,\ldots,0)$. The iterated $n$-fold reduced suspension of $A$ is homotopically equivalent to the quotient
$$ A\times \Delta^n/(A\times \partial \Delta^n \cup \ast \times \Delta^n)= A \wedge (\Delta^n/\partial \Delta^n)$$
that we will denote by $\widetilde{\Sigma}^nA$.

If $(B,\ast)$ is another pointed space we denote by $A\circledast B$ the join $CA\times B\;\cup\, A\times CB$ with base point $(\ast,\ast)\in A\times B\subset CA\times B\;\cap\, A \times CB$.
\begin{rems} \label{rmkpinchmap} We collect here some well-known facts which will be used in this work:
\begin{itemize}
\item[(a)] There is a canonical map given by the composition
$$\begin{array}{rcccl}
\zeta:A\circledast B &\to& \Sigma (A\times B) &\to & \widetilde{\Sigma} (A\times B)\\
\end{array}
$$
in which the second arrow is the identification map and the first arrow is the (non-pointed) map induced by the (non-pointed) maps
$$\begin{array}{rclcrcl}
A \times CB & \to& \Sigma (A\times B) & CA \times B & \to & \Sigma (A\times B)\\
(a,[b,u])&\mapsto &[(a,b),\frac{1-u}{2}] & ([a,u],b)&\mapsto &[(a,b),\frac{1+u}{2}]
\end{array}$$
Although the first map in the composition defining $\zeta$ is non-pointed, the composite $\zeta$ is pointed and, for this reason, we will mainly consider reduced suspensions in the sequel.
\item[(b)] The composition
$$
A\circledast B \stackrel{\zeta}{\longrightarrow} \widetilde{\Sigma} (A\times B) \longrightarrow \widetilde{\Sigma} (A\wedge B),
$$
where the second arrow is the identification map $[(a,b),u] \mapsto [a\wedge b,u]$, is a homotopy equivalence.
\item[(c)] The canonical (pointed) identification maps
$$\begin{array}{rclcrcl}
A \times CB & \to& \widetilde{\Sigma} (A\times B) & CA \times B & \to &\widetilde{\Sigma} (A\times B)\\
(a,[b,u])&\mapsto &[(a,b),u] & ([a,u],b)&\mapsto &[(a,b),u]
\end{array}$$
induce the following map
$$\begin{array}{rcl}
\nu:A\circledast B &\to& \widetilde{\Sigma} (A\times B) \vee \widetilde{\Sigma} (A\times B)\\
(a,[b,u]) &\mapsto & ([(a,b),u],*)\\
([a,u],b) &\mapsto & (*,[(a,b),u])\\
\end{array}
$$
which we call the {\em difference pinch map}. Indeed $\nu$ fits in the following commutative diagram
\begin{equation}\label{difzetadif}\xymatrix{
A\circledast B \ar[d]^{\zeta} \ar[rrd]^-{\nu}\\
\widetilde{\Sigma} (A\times B)\ar[d] \ar[rr]^-{\tilde{\nu}} &&\widetilde{\Sigma} (A\times B) \vee \widetilde{\Sigma} (A\times B)\ar[d]\\
\widetilde{\Sigma} (A\wedge B) \ar[rr]^-{\tilde{\nu}} &&\widetilde{\Sigma} (A\wedge B) \vee \widetilde{\Sigma} (A\wedge B)\\
}\end{equation}
where $\tilde \nu$ is the standard difference pinch map, that is the map
$$\begin{array}{rcl}
\tilde{\nu}:\widetilde{\Sigma} Z&\to& \widetilde{\Sigma} Z \vee \widetilde{\Sigma} Z\\
\left[z,u\right] &\mapsto & \left\{\begin{array}{lr}
([z,1-2u],*) & 0\leq u\leq 1/2,\\
(*,[z,2u-1]) & 1/2\leq u\leq 1.
\end{array}\right.
\end{array}
$$
In particular, if $f,g: \widetilde{\Sigma}Z \to X$ are two maps then the composite $\nabla \circ (f\vee g)\circ\tilde{\nu}$ is the difference $g-f$.
\end{itemize}
\end{rems}

\section{Sectional category and relative sectional category}\label{secat}

In this section we will review some known facts about sectional category, most of which can be found in the references \cite{CLOT}, \cite{James}, and \cite{Schwarz}. We will pay particular attention to the two most well-studied examples, namely Lusternik--Schnirelmann category and Farber's topological complexity.

In Definitions~\ref{definicionLScattegoria}--\ref{definicionsecat} below we do not require that subspaces are pointed. Consequently, the null-homotopies in Definition~\ref{definicionLScattegoria}, and the partial sections in Definitions~\ref{definicionTC} and~\ref{definicionsecat}, are not assumed to be pointed.

\begin{defn}\label{definicionLScattegoria} The \emph{(Lusternik--Schnirelmann) category} of a space $X$, denoted $\cat(X)$, is the {least integer} $n$ such that $X$ admits a cover by $n+1$ open sets $U_0, \ldots , U_n$ such that each inclusion $U_i\hookrightarrow X$ is null-homotopic. If no such integer exists, we set $\cat(X)=\infty$.
\end{defn}
\begin{defn} \label{definicionTC}The \emph{topological complexity} of a space $X$, denoted $\TC(X)$, is the least non-negative integer $n$ such that $X\times X$ admits a cover by $n+1$ open sets $U_0, \ldots , U_n$, on each of which there exists a continuous partial section of the evaluation fibration
\[
\pi_X:X^I\rightarrow X\times X, \qquad \alpha \mapsto \big(\alpha (0),\alpha (1)\big).
 \]
 Here $X^I$ denotes the space of paths in $X$ with base point the constant path in $\ast\in X$. If no such integer exists, we set $\TC(X)=\infty$.
\end{defn}

Both of these concepts are special cases of the sectional category of a fibration, first studied by A.\ S.\ Schwarz under the name \emph{genus}.
\begin{defn}\label{definicionsecat}
Let $p: \mathcal{A}\to X$ be a (surjective) fibration. The \emph{sectional category} of $p$, denoted $\secat(p)$, is defined to be the least non-negative integer $n$ such that $X$ admits a cover by $n+1$ open sets $U_0,\ldots , U_n$ on each of which there exists a continuous partial section of $p$. If no such integer exists, we set $\secat(p)=\infty$.
\end{defn}

One of the key results in this area is a characterization of sectional category in terms of fibred joins, which we now recall within our base-point setting. For $n\geq 0$ we denote by $p^{n+1}_X:\mathcal{A}^{n+1}_X\to X$ the fibred product of $n+1$ copies of $p$. The total space of the fibred join of $n+1$ copies of $p$ is the quotient space
$$J^n_X(\mathcal{A}) =\mathcal{A}_X^{n+1} \times \Delta^ n/\sim$$
where $\sim$ is the equivalence relation generated by
\[
(a_0,\ldots,a_i,\ldots, a_{n},t_0,\ldots,t_i,\ldots,
t_{n})\sim(a_0,\ldots,a'_i,\ldots,a_{n},t_0,\ldots,t_i,\ldots,t_{n})
\]
if $t_i=0$. We denote a general element of $\mathcal{A}_X^{n+1}\times \Delta^n$ by $(\mathbf{a},\mathbf{t})$ and its class in $J^n_X(\mathcal{A})$ by $\langle\mathbf{a}\mid \mathbf{t}\rangle$. In these terms, we choose
\begin{equation}\label{puntobasedeljoin}
\ast=\left\langle(\ast,\ldots,\ast) \;\left\rvert \,\left(1,0,\ldots,0\right)\right.\right\rangle
\end{equation}
as the base point in $J^n_X(\mathcal{A})$---naturally induced by the base points of $\mathcal{A}$ and $\Delta^n$. The map $p_n:J^n_X(\mathcal{A})\rightarrow X$ given by  $\langle\mathbf{a}\mid \mathbf{t}\rangle\mapsto p_X^{n+1}(\mathbf{a})$ is a (pointed) fibration, called the {\em $(n+1)$-fold fibred join of $p$}. If $A=p^ {-1}(\ast)$ is the fibre of $p$ over $\ast\in X$, then $p_n^{-1}(\ast)$ is the quotient $J^ n(A)=A^{n+1} \times \Delta^ n/\sim$ where the relation is the same as above. The latter space is homotopically equivalent to the $n$-fold suspension of the $(n+1)$-fold smash product of $A$ with itself. More precisely, with the notation introduced before, the following composite of identification maps
\begin{equation}\label{lasidentificacionesobvias}
J^n(A) \stackrel{r}{\longrightarrow}\widetilde{\Sigma}^nA^{n+1} \longrightarrow  \widetilde{\Sigma}^nA^{\wedge n+1}
\end{equation}
is a homotopy equivalence.

\begin{thm}\label{puntodepartida}
Let $p:\mathcal{A}\to X$ be a (surjective) fibration with $X$ paracompact. If $n\ge1$ or $\mathcal{A}$ is path-connected, then $\secat(p)\leq n$ if and only
if $p_n:J^n_X(\mathcal{A})\rightarrow X$ admits a (pointed) homotopy section.
\end{thm}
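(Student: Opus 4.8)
This is the pointed refinement of Schwarz's classical characterization of sectional category via iterated fibred joins \cite{Schwarz}, so I would prove the two implications along the familiar lines, paying attention to base points; the genuinely new work is confined to the end of the ``only if'' direction.

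For the ``if'' implication, suppose $p_n$ admits a pointed homotopy section $s\colon X\to J^n_X(\mathcal{A})$, so that $p_n\circ s$ is pointed-homotopic to $\mathrm{id}_X$. Lifting such a homotopy through the pointed fibration $p_n$, with initial condition $s$, replaces $s$ by an honest pointed section $\hat s$ of $p_n$. Composing $\hat s$ with the $i$-th barycentric-coordinate function $J^n_X(\mathcal{A})\to[0,1]$, $\langle\mathbf{a}\mid\mathbf{t}\rangle\mapsto t_i$ (well defined, since the collapsing relation does not alter $\mathbf{t}$), yields continuous functions $\hat s_i\colon X\to[0,1]$ whose open supports $U_i=\hat s_i^{-1}((0,1])$ cover $X$, because the $t_i$ sum to $1$. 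Over $U_i$ the $i$-th vertex entry of $\hat s$ is untouched by the collapsing relation, so $x\mapsto(\text{$i$-th entry of }\hat s(x))$ is a continuous partial section of $p$ over $U_i$, whence $\secat(p)\le n$. This implication uses neither paracompactness nor the connectivity hypothesis, and it also explains why it is enough to assume a pointed \emph{homotopy} section.

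For the ``only if'' implication, assume $\secat(p)\le n$ and fix an open cover $U_0,\dots,U_n$ of $X$ together with partial sections $\sigma_i\colon U_i\to\mathcal{A}$ of $p$. Since $X$ is paracompact there is a partition of unity $\{\varphi_i\}_{i=0}^n$ subordinate to this cover, and I would assemble the map
\[
s\colon X\to J^n_X(\mathcal{A}),\qquad s(x)=\langle\sigma_0(x),\dots,\sigma_n(x)\mid\varphi_0(x),\dots,\varphi_n(x)\rangle,
\]
the $i$-th entry being irrelevant on the locus $\varphi_i(x)=0$, where it is collapsed. The one routine but slightly delicate point is that this formula is \emph{continuous}; it is verified on the open sets $\{\varphi_i>0\}$, using that a vertex entry contributes to a point of the join only when its barycentric coordinate is positive. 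By construction $p_n\circ s=\mathrm{id}_X$, so $s$ is a genuine, though possibly unpointed, section of $p_n$.

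It remains to correct $s$ at the base point, and this is where the hypothesis enters. The value $s(\ast)$ lies in the fibre $p_n^{-1}(\ast)=J^n(A)$, which is path-connected under our assumptions: for $n\ge1$ this follows from the equivalence $J^n(A)\simeq\widetilde{\Sigma}^nA^{\wedge n+1}$, which exhibits it as a (reduced) suspension, while for $n=0$ we have $J^0(A)=A$, path-connected by hypothesis. Choosing a path in $J^n(A)$ from the base point to $s(\ast)$ and using that $\{\ast\}\hookrightarrow X$ is a cofibration, so that $(X\times\{0\})\cup(\{\ast\}\times I)\hookrightarrow X\times I$ is a cofibration and a homotopy equivalence, I would solve the lifting problem asking for $H\colon X\times I\to J^n_X(\mathcal{A})$ with $p_n\circ H$ the projection onto $X$, with $H(\cdot,0)=s$, and with $H(\ast,-)$ the chosen path traversed backwards; this is possible because $p_n$ is a fibration. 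Then $H(\cdot,1)$ is a pointed section of $p_n$, in particular a pointed homotopy section, and the proof is complete. I expect this last step to be the crux, both because it is precisely where the theorem sharpens Schwarz's statement and because it is the only place where the connectivity assumption is used; the continuity verification in the preceding paragraph is the sole remaining point needing care.
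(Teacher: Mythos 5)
Your proof is correct and fleshes out exactly the approach the paper sketches: the paper cites Schwarz for the unpointed version and argues the pointed refinement from path-connectedness of the fibre $J^n(A)$ together with the facts that $p_n$ is a pointed fibration and the spaces are well pointed. Your reconstruction of the Schwarz argument (partition of unity $\leftrightarrow$ barycentric coordinates on the fibred join, with the continuity check via subordination of supports) and the base-point correction via the acyclic cofibration $(X\times\{0\})\cup(\{\ast\}\times I)\hookrightarrow X\times I$ and the HLP of $p_n$ both match what the paper intends, so this is essentially the same route.

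One small imprecision is worth repairing. For $n=0$ you write that $J^0(A)=A$ is ``path-connected by hypothesis,'' but the theorem's hypothesis is that the total space $\mathcal{A}$ is path-connected, not the fibre $A$; in general these are not equivalent (a nontrivial connected covering space has disconnected fibres). In the only-if direction, where you actually use this, $\secat(p)\le 0$ furnishes a global section $\sigma$ of $p$, so $p_*\colon\pi_1(\mathcal{A})\to\pi_1(X)$ is split surjective, and $X$ is path-connected because $p$ is surjective and $\mathcal{A}$ is connected; the long exact homotopy sequence of the fibration then gives $\pi_0(A)=\ast$ since $\pi_0(\mathcal{A})=\ast$. (In the if direction you never need $A$ connected at all.) Adding this one line would close the gap; the paper's own remark elides the same point.
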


Most of the standard formulations of Theorem~\ref{puntodepartida} in the literature (e.g.~\cite[Theorem 3]{Schwarz}) are base-point free. In our context, the hypothesis ``$n\ge1$ or $\mathcal{A}$ path-connected'' in Theorem~\ref{puntodepartida} assures that the fiber of $p_n$ is path-connected. The pointed homotopy section (and even a pointed section) is then warranted since spaces are well pointed and $p_n$ is a pointed fibration.

\begin{rem}\label{fibredjoins}
For any $n\ge 0$ there is a commutative diagram
\begin{equation}\label{ijkappa}\xymatrix{
J^n(A) \ar[r]^{i_n}\ar@{^{(}->}[d] & J^n_X(\mathcal{A}) \ar@{^{(}->}[d]^{\jmath_n} \\
CJ^n(A) \ar[r]^{\kappa_n} &J^{n+1}_X(\mathcal{A}).
}\end{equation}
The inclusions $\jmath_n: J^n_X(\mathcal{A})\to J^{n+1}_X(\mathcal{A})$ are given by
$$\langle\mathbf{a}\mid \mathbf{t}\rangle\mapsto \langle\mathbf{a},a_{n+1}\mid \mathbf{t},0\rangle$$
where $\mathbf{a}\in \mathcal{A}_X^{n+1}$, $\mathbf{t}\in \Delta^n$ and $a_{n+1}$ is any element of $\mathcal{A}$ 
with $(\mathbf{a},a_{n+1})\in\mathcal{A}_X^{n+1}$.
They are compatible with the maps $p_n$ and $p_{n+1}$. The maps $\kappa_n: CJ^n(A) \to J^{n+1}_X(\mathcal{A})$ are given by $[\langle \mathbf{a}\mid \mathbf{t}\rangle,u]\mapsto \langle \mathbf{a},*\mid (1-u)\mathbf{t},u\rangle$. Notice that this map factors through the inclusion $J^{n+1}(A)\to J^{n+1}_X(\mathcal{A})$.
\end{rem}

Coming back to topological complexity and category, we suppose that $X$ is path-connected and paracompact with base point $\ast \in X$. Then we have
$$\TC(X)=\secat(\pi_X:X^I\rightarrow X\times X) \mbox{  \ \ and  \ \ } \cat(X)=\secat(p_X:PX\to X)$$  where $PX\subset
X^I$ is the space of paths beginning at the base point $*\in X$ and $p_X(\gamma)=\gamma(1)$.

The fibred join of $n+1$ copies of $p_X$ will be denoted by $g_n(X):G_n(X)\to X$ and referred as the {\em $n$-th Ganea fibration of $X$}. Thus $\cat(X)\leq n$ if an only if $g_n(X):G_n(X)\to X$ admits a (pointed) homotopy section. It is well-known that $G_1(X)\simeq \widetilde{\Sigma}\Omega X$ {with $g_1(X)$ homotopic to the adjoint of the identity map on $\Omega X$,} and that when $X={\widetilde{\Sigma}} A$ is a suspension {and $n\ge1$}, $g_n({\widetilde{\Sigma}} A)$ admits a canonical section given by the composition
\[
s_0: {\widetilde{\Sigma}} A \to {\widetilde{\Sigma}}\Omega{\widetilde{\Sigma}} A \simeq G_1({\widetilde{\Sigma}} A)\hookrightarrow G_n({\widetilde{\Sigma}} A)
\]
{where $\widetilde{\Sigma}A\to\widetilde{\Sigma}\Omega\widetilde{\Sigma}A$ is the suspension of the adjoint of the identity map on $\widetilde{\Sigma} A$.}

Likewise, the fibred join of $n+1$ copies of $\pi_X: X^I\to X\times X$ will be denoted by $g^{\TC}_n(X):G^{\TC}_n(X) \to X\times X$ and referred as the \emph{$n$-th $\TC$-Ganea fibration of $X$.} Thus $\TC(X)\leq n$ if and only if $g^{\TC}_n(X): G^ {\TC}_n(X) \to X\times X$ admits
a (pointed) homotopy section.

 Both $g_n(X)$ and $g_n^{\TC}(X)$ have as fibre (over the {corresponding} base points $\ast$ {and $(\ast,\ast)$}) the join $J^n(\Omega X)$ of $n+1$ copies of the based loop space $\Omega X$. When considering the Ganea fibrations, we denote this space by $F_n(X)$. The inclusions will be denoted {by}
$$i_n(X):F_n(X)\to G_n(X) \quad\mbox{and}\quad i^{\TC}_n(X):F_n(X)\to G^{\TC}_n(X).$$
As mentioned before, the inclusions
$$G_n(X)\hookrightarrow G_{n+1}(X) \quad\mbox{and}\quad  G^{\TC}_n(X)\hookrightarrow G^{\TC}_{n+1}(X)$$
correspond to inclusions on the first $n+1$ factors. The constructions $G_n$, $G_n^\TC$ and $F_n$ are homotopy functors.

\begin{prop}\label{TCcatHopf}
The map $\chi:P(X\times X)=PX\times PX \to X^I$ given by $(\alpha, \beta)\mapsto \alpha^ {-1}\beta$ induces a map $\bar{\chi}: \Omega(X\times X)=\Omega X\times \Omega X \to \Omega X$ and commutative diagrams for any $n$:

\[
\xymatrix{
F_n (X\times X) \ar[d]_{i_n(X\times X)} \ar[rr]^{\bar\chi_n} & &F_n( X) \ar[d]^{i^{\TC}_n(X)}\\
G_n(X\times X)\ar[rd]_{g_n(X\times X)} \ar[rr]^{\chi_n} &&G^{\TC}_n(X) \ar[ld]^{g^{\TC}_n(X)}\\
&X\times X.
}
\]
\end{prop}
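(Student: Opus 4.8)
The plan is to realize $\chi$ as a pointed map over $X\times X$ and then appeal to the elementary functoriality of the fibred join construction; no homotopy-theoretic input is needed, only bookkeeping of the standard identifications.

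First I would check the basic properties of $\chi$. Under the canonical homeomorphism $P(X\times X)\cong PX\times PX$ (a based path in $X\times X$ is precisely a pair of based paths in $X$, continuity being automatic in the compactly generated setting), the Ganea-type fibration $p_{X\times X}\colon P(X\times X)\to X\times X$ corresponds to $p_X\times p_X$. Writing $\alpha^{-1}$ for the reverse of a path and juxtaposition for concatenation with the usual reparametrisation, the path $\alpha^{-1}\beta$ runs from $\alpha(1)$ to $\beta(1)$, so
\[
\pi_X\bigl(\chi(\alpha,\beta)\bigr)=\bigl(\alpha(1),\beta(1)\bigr)=(p_X\times p_X)(\alpha,\beta),
\]
that is, $\pi_X\circ\chi=p_{X\times X}$; hence $\chi$ is a map over $X\times X$. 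It is continuous (reversal and concatenation of paths sharing a common endpoint are continuous here) and pointed (the constant paths at $\ast$ are sent to the constant path at $\ast$). In particular $\chi$ carries the fibre $p_{X\times X}^{-1}(\ast,\ast)=\Omega(X\times X)=\Omega X\times\Omega X$ into the fibre $\pi_X^{-1}(\ast,\ast)=\Omega X$, and the restriction is exactly the map $\bar\chi$, $(\alpha,\beta)\mapsto\alpha^{-1}\beta$.

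Next I would invoke functoriality of the fibred join. Given fibrations $p\colon\mathcal A\to X$ and $q\colon\mathcal B\to X$ together with any map $f\colon\mathcal A\to\mathcal B$ over $X$ (no fibration hypothesis on $f$ is needed), the coordinatewise map sends $\mathcal A^{n+1}_X$ into $\mathcal B^{n+1}_X$, and $\langle\mathbf a\mid\mathbf t\rangle\mapsto\langle f(\mathbf a)\mid\mathbf t\rangle$ is a well-defined pointed map $J^n_X(\mathcal A)\to J^n_X(\mathcal B)$: the collapsing relation $\sim$ only identifies $\mathcal A$-coordinates whose $\Delta^n$-parameter vanishes, and those parameters are left untouched. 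This induced map commutes with the projections to $X$ and restricts over $\ast\in X$ to $J^n$ applied to the map of fibres. Applying this with $X$ replaced by $X\times X$, $\mathcal A=P(X\times X)$, $\mathcal B=X^I$ and $f=\chi$ yields the map $\chi_n\colon G_n(X\times X)\to G^{\TC}_n(X)$ over $X\times X$ whose restriction over $(\ast,\ast)$ is $J^n(\bar\chi)=\bar\chi_n\colon F_n(X\times X)\to F_n(X)$. (The same formula shows $\chi_n$ is compatible with the stabilization inclusions, though this is not needed here.)

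It then remains to read off the two cells of the diagram: the lower triangle commutes because $\chi_n$ is a map over $X\times X$, so $g^{\TC}_n(X)\circ\chi_n=g_n(X\times X)$; the upper square commutes because $\chi_n$ restricts to $\bar\chi_n$ on fibres over $(\ast,\ast)$ while $i_n(X\times X)$ and $i^{\TC}_n(X)$ are precisely the inclusions of those fibres. The only step calling for any care is checking that the identifications invoked above are mutually compatible, namely $P(X\times X)\cong PX\times PX$, the descriptions of the fibres of $\pi_X$ and $p_{X\times X}$ over the base points, and the chosen base point \eqref{puntobasedeljoin} of the fibred join; once these are pinned down the argument is purely formal and presents no genuine obstacle.
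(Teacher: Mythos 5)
Your proof is correct. The paper actually states Proposition~\ref{TCcatHopf} without proof, regarding it as an immediate consequence of the functoriality of the fibred join construction for fibrewise maps (a fact the paper invokes explicitly in the proof of Lemma~\ref{fibjoinssplit}); your argument supplies exactly the intended bookkeeping, verifying that $\chi$ is a pointed map over $X\times X$, that the fibred join is functorial for such maps, and reading off the two commuting cells from the resulting map $\chi_n$ being over $X\times X$ and restricting to $J^n(\bar\chi)$ on fibres.
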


This fact will be important in later sections, as it will allow us to construct Hopf invariants for $\TC(X)$ from Hopf invariants for $\cat(X\times X)$.

\medskip
Next we define relative sectional category and give some of its properties.

\begin{defn}\label{secatreldef}
Let $p:\mathcal{A}\to X$ be a fibration and let $\varphi: K\to X$ be any map. The {\em sectional category of $p: \mathcal{A}\to X$ relative to $\varphi\colon\thinspace K\to X$,} denoted by $\secat_\varphi(p)$, is the sectional category of $\varphi^*p$, the pullback of $p$ under $\varphi$. If $K\subseteq X$ and $\varphi$ is the inclusion, we denote $\secat_\varphi(p)=:\secat_K(p)$. In particular, $\secat_X(p)=\secat(p)$.
\end{defn}
\begin{prop}\label{secatprops}
Let $p:\mathcal{A}\to X$ be a (surjective) fibration with fibre $A$, and let $\varphi: K\to X$ be any map. The relative sectional category satisfies the following properties:
 \begin{enumerate}
 \item If $\psi: K\to X$ is homotopic to $\varphi$, then $\secat_\psi(p)=\secat_\varphi(p)$.
 \item\label{itemdosdel38} $\secat_\varphi(p)\le \secat(p)$.
 \item $\secat_\varphi(p)\le \cat(K)$.
 \item If $\pi_i(A)=0$ for $i<r$ then\footnote{We write $\hdim(K)$ for the \emph{homotopy dimension of $K$,} i.e.~the smallest dimension of CW complexes having the homotopy type of $K$.} $\secat_\varphi(p) \le \frac{\hdim(K)}{r+1}$.
 \item Suppose there are cohomology classes $x_1,\ldots , x_k\in H^*(X)$ with any coefficients such that $p^*(x_1) = \cdots = p^*(x_k)=0$ and $\varphi^*(x_1\cdots x_k)\neq 0$. Then $\secat_\varphi(p)\ge k$.
\end{enumerate}
In addition, if either $n\geq1$ or $\mathcal{A}$ is path-connected, then
\begin{enumerate}\addtocounter{enumi}{5}
 \item $\secat_\varphi(p)$ equals the smallest $n$ such that the map $\varphi: K\to X$ admits a (pointed) lift through $p_n: J^n_X(\mathcal{A})\to X$.
 \end{enumerate}
\end{prop}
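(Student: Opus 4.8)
The plan is to deduce all six items from the definitional identity $\secat_\varphi(p)=\secat(\varphi^*p)$, where $\varphi^*p\colon\varphi^*\mathcal{A}\to K$ denotes the pullback fibration (which has the same fibre $A$ as $p$), together with the fibred-join characterization of Theorem~\ref{puntodepartida}. Item (1) is then immediate: homotopic maps $\varphi_0\simeq\varphi_1$ induce fibre homotopy equivalent pullbacks $\varphi_0^*p\simeq\varphi_1^*p$ (a standard property of fibrations), and $\secat$ is invariant under fibre homotopy equivalence.

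For (2), observe that any partial section of $p$ over an open set $U_i\subseteq X$ pulls back to a partial section of $\varphi^*p$ over $\varphi^{-1}(U_i)\subseteq K$; applying this to a cover of $X$ realizing $\secat(p)$ yields $\secat(\varphi^*p)\le\secat(p)$. For (3), if $U_0,\dots,U_n$ is a cover of $K$ realizing $\cat(K)$, then each composite $U_i\hookrightarrow K\xrightarrow{\ \varphi\ }X$ is null-homotopic, so $(\varphi^*p)|_{U_i}$ is the pullback of $p$ along a null-homotopic map, hence fibre homotopy trivial and in particular admits a section; thus $\secat(\varphi^*p)\le n=\cat(K)$.

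Items (4) and (5) are the dimension and cohomological estimates of Schwarz, applied to $\varphi^*p$. For (4) I would use item (6): it suffices to lift $\varphi$ through $p_n$, and under the hypothesis $\pi_i(A)=0$ for $i<r$ the fibre $J^n(A)\simeq\widetilde{\Sigma}^n A^{\wedge n+1}$ of $p_n$ is $\bigl((n+1)(r+1)-2\bigr)$-connected; obstruction theory then produces a lift as soon as $\hdim(K)\le(n+1)(r+1)-1$, and taking $n=\lfloor\hdim(K)/(r+1)\rfloor$ gives $\secat_\varphi(p)\le n\le\hdim(K)/(r+1)$. For (5), the pullback square yields $(\varphi^*p)^*\bigl(\varphi^*(x_i)\bigr)=\overline\varphi^*\bigl(p^*(x_i)\bigr)=0$, where $\overline\varphi\colon\varphi^*\mathcal{A}\to\mathcal{A}$ is the map covering $\varphi$; so the classes $y_i:=\varphi^*(x_i)$ lie in $\ker\bigl((\varphi^*p)^*\bigr)$ and have product $y_1\cdots y_k=\varphi^*(x_1\cdots x_k)\ne0$. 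If $\secat(\varphi^*p)\le k-1$ were to hold, a cover $U_0,\dots,U_{k-1}$ of $K$ with partial sections $s_i$ would give $y_i|_{U_i}=s_i^*\bigl((\varphi^*p)^*(y_i)\bigr)=0$, so $y_1\cdots y_k$ would lie in the image of the relative cup product from $H^*\bigl(K,\bigcup_i U_i\bigr)=H^*(K,K)=0$, a contradiction; hence $\secat_\varphi(p)\ge k$.

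Finally, for (6) the crucial point is that the fibred join commutes with pullback: since the fibred product does, one checks directly from the definitions that $J^n_K(\varphi^*\mathcal{A})=\varphi^*\bigl(J^n_X(\mathcal{A})\bigr)$ and $(\varphi^*p)_n=\varphi^*(p_n)$. Applying Theorem~\ref{puntodepartida} to $\varphi^*p$ (whose fibre is $A$, with $K$ paracompact since it has the homotopy type of a CW complex, and under the stated connectivity hypothesis) identifies $\secat_\varphi(p)\le n$ with the existence of a pointed homotopy section of $\varphi^*(p_n)$; by the universal property of the pullback, together with straightening via the homotopy lifting property, such a section is the same datum as a pointed homotopy lift of $\varphi$ through $p_n$. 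I expect the step requiring the most care to be this last identification: one must verify that the correspondence between (homotopy) sections of $\varphi^*(p_n)$ over $K$ and (homotopy) lifts of $\varphi$ through $p_n$ is compatible with the chosen base point \eqref{puntobasedeljoin} of $J^n_X(\mathcal{A})$; the only other delicate point is the fencepost in the connectivity count for (4).
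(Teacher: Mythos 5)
The paper states Proposition~\ref{secatprops} without proof, treating all six items as standard consequences of Schwarz's classical results applied to the pulled-back fibration $\varphi^*p$, so there is no ``paper's own proof'' to compare against directly. That said, your proposal is correct and is the natural filling-in of the omitted details. The definitional identity $\secat_\varphi(p)=\secat(\varphi^*p)$ together with invariance of $\secat$ under fibre homotopy equivalence handles (1); pulling back covers and sections handles (2) and (3); and (4) and (5) are Schwarz's dimension and cup-length bounds for $\varphi^*p$. Your connectivity count in (4) is exact: with $A$ being $(r-1)$-connected, $J^n(A)\simeq\widetilde{\Sigma}^n A^{\wedge(n+1)}$ is $\bigl((n+1)(r+1)-2\bigr)$-connected (equivalently $\bigl((n+1)r+n-1\bigr)$-connected, matching Proposition~\ref{uniqueness} in the paper), and the obstruction-theoretic lifting criterion $\hdim(K)\le(n+1)(r+1)-1$ combined with $n=\lfloor\hdim(K)/(r+1)\rfloor$ gives the stated bound with no fencepost error. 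For (6), the key observation $J^n_K(\varphi^*\mathcal{A})=\varphi^*\bigl(J^n_X(\mathcal{A})\bigr)$, i.e.\ that fibred joins commute with pullbacks, is correct (both are constructed from the fibred product, which commutes with pullback), and the translation between pointed homotopy sections of $\varphi^*(p_n)$ and pointed homotopy lifts of $\varphi$ through $p_n$ via the universal property plus HLP is exactly right; the base-point compatibility you flag is automatic once $\varphi$ is pointed, since the base point \eqref{puntobasedeljoin} sits in the fibre over $\ast$. In short: the proof is complete and matches the spirit of how the authors evidently intend this background result to be understood.
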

If $\varphi: K\to X$ we denote $\secat_\varphi(p_X)$ by $\cat_\varphi(X)$, or by $\cat_K(X)$ when $\varphi$ is an inclusion. Similarly if $\varphi: K\to X\times X$ we denote $\secat_\varphi(\pi_X)$ by $\TC_\varphi(X)$, or by $\TC_K(X)$ when $\varphi$ is an inclusion. Note that {$\cat_\varphi(X)$ is usually denoted by $\cat(\varphi)$ in the literature ---the so-called category of the map $\varphi$---, {see for instance~\cite{CLOT}.} Likewise, the notation $\TC_K(X)$ differs from the notation for subspace topological complexity used, for example, in \cite{FarInv} and \cite{Grant}. We have proposed the slightly more specific notations $\cat_\varphi(X)$ and $\TC_\varphi(X)$ since much of the point in this paper is to benefit from comparing the sectional category of different fibrations over~$X$.}

The main advantage of relative sectional category over its absolute counterpart is monotonicity: if $K\subseteq K'\subseteq X$ and $p:\A\to X$ is a fibration, then $\secat_K(p)\le \secat_{K'}(p)$. Moreover, the relative sectional category either remains the same or increases by one on attaching a cell, or more generally on attaching a cone along a map. The following result and its proof are integral to the results in this paper.

\begin{prop}\label{secatupbyone}
Let $p: \mathcal{A} \to X$ be a fibration. Suppose that $X=K\cup_\alpha CS$ is the mapping cone of a map $\alpha: S\to K$. Then
\[
\secat_K(p)\le \secat(p) \le \secat_K(p)+1.
\]
\end{prop}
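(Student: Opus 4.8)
The plan is to prove the two inequalities separately. The left-hand inequality $\secat_K(p) \le \secat(p)$ is immediate: it is exactly the monotonicity property recorded in Proposition~\ref{secatprops}\eqref{itemdosdel38}, applied to the inclusion $K \hookrightarrow X$. So the entire content lies in the right-hand inequality $\secat(p) \le \secat_K(p) + 1$, and by Theorem~\ref{puntodepartida} (together with the standard fact that paracompactness is preserved by attaching a cone) this amounts to the following: if $\varphi \colon K \to X$ is the inclusion and $n = \secat_K(p)$, then Proposition~\ref{secatprops}(6) gives a pointed lift $\sigma \colon K \to J^n_X(\mathcal{A})$ of the inclusion through $p_n$, and we must produce a pointed homotopy section of $p_{n+1} \colon J^{n+1}_X(\mathcal{A}) \to X$.

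The construction proceeds as follows. Restricting $\sigma$ along $\alpha \colon S \to K$ gives a lift $\sigma \circ \alpha \colon S \to J^n_X(\mathcal{A})$ of $\alpha$ (viewed as a map to $X$). First I would use the fact that $\alpha$, composed with the inclusion $K \hookrightarrow X$, becomes null-homotopic in $X$ — indeed it bounds the cone $CS$ — to deform $\sigma \circ \alpha$ into the fibre $J^n(A) = p_n^{-1}(\ast)$. More precisely, the nullhomotopy of $\alpha$ in $X$ lifts (using that $p_n$ is a fibration) to a homotopy in $J^n_X(\mathcal{A})$ starting at $\sigma\circ\alpha$ and ending at a map $\tilde\alpha \colon S \to J^n(A)$ into the fibre over the basepoint. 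Now the key input is Remark~\ref{fibredjoins}: the map $\kappa_n \colon CJ^n(A) \to J^{n+1}_X(\mathcal{A})$ allows us to extend a map into the fibre $J^n(A)$ over a cone, and it is compatible (via the diagram~\eqref{ijkappa}) with $p_{n+1}$ in the sense that $\kappa_n$ composed with $p_{n+1}$ is the constant map at $\ast$. So $\kappa_n \circ C\tilde\alpha \colon CS \to J^{n+1}_X(\mathcal{A})$ is a lift of the constant map $CS \to \{\ast\} \subset X$, which agrees over $S$ (after the homotopy just described, and after including $J^n_X(\mathcal{A}) \hookrightarrow J^{n+1}_X(\mathcal{A})$ via $\jmath_n$) with $\jmath_n \circ \sigma \circ \alpha$. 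Patching $\jmath_n \circ \sigma$ on $K$ with $\kappa_n \circ C\tilde\alpha$ on $CS$ — after correcting $\jmath_n\circ\sigma$ by the lifted homotopy near $S$ so that the two genuinely agree on the overlap, using that $S \hookrightarrow X$ is a cofibration — yields a continuous pointed map $X = K \cup_\alpha CS \to J^{n+1}_X(\mathcal{A})$ which lifts the identity of $X$. Feeding this through Theorem~\ref{puntodepartida} (or Proposition~\ref{secatprops}(6) with $\varphi = \mathrm{id}_X$) gives $\secat(p) \le n+1$.

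The main obstacle is the bookkeeping around the gluing, specifically ensuring that the lift one builds over $CS$ and the lift $\jmath_n \circ \sigma$ over $K$ can be made to coincide \emph{on the nose} over $S$, not merely up to homotopy, so that the pasting lemma applies and produces an honest map out of the pushout $K \cup_\alpha CS$. This is handled by the homotopy extension property of the cofibration $S \hookrightarrow CS$ (or $S \hookrightarrow X$): one uses the homotopy from $\sigma\circ\alpha$ to $\tilde\alpha$ to modify the candidate lift in a collar neighbourhood of $S$ inside $CS$, at the cost of replacing $C\tilde\alpha$ by a homotopic extension that matches $\jmath_n\circ\sigma\circ\alpha$ exactly on $S$. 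One must also check that all the homotopies and the final section can be taken to be pointed, which is where well-pointedness of the spaces, the fact that $p_n$ and $p_{n+1}$ are pointed fibrations, and the compatibility of basepoints in~\eqref{puntobasedeljoin} are invoked. I expect the authors to phrase the conclusion so as to record not just the inequality but also that the resulting section over the cone has a controlled form — namely that over $CS$ it factors through $\kappa_n$, hence through the fibre inclusion $J^{n+1}(A) \to J^{n+1}_X(\mathcal{A})$ — since the introduction flags precisely this extra control as the new feature of Proposition~\ref{secatupbyone}.
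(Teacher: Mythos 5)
Your proposal is correct and reproduces the paper's proof: lift the inclusion $K\hookrightarrow X$ through $p_n$, deform its composite with $\alpha$ into the fibre $J^n(A)$ by lifting the contraction of $CS$, extend over the cone via $\kappa_n$, and glue along $S$, then check that the result is a homotopy section of $p_{n+1}$. Of your two suggestions for making the pieces agree strictly on $S$, the paper chooses the second --- using the homotopy extension property of the cofibration $\alpha\colon S\to K$ (after mapping-cylinder replacement) to modify the lift $\phi$ to a $\phi'$ over all of $K$ with $\phi'\circ\alpha$ landing in the fibre --- rather than modifying in a collar inside $CS$; and your closing observation about the controlled form of the section over $CS$ is precisely the content of Remark~\ref{neuralgico}.
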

\begin{proof}
We may assume from the outset that $\alpha$ is a closed cofibration, and therefore an inclusion. This follows from Proposition \ref{secatprops} (1) and the standard construction, replacing $K$ with the mapping cylinder of $\alpha$. In particular, $S$, $CS$, $K$, and $X$ all share the base point $\ast=[\ast,0]$.

Assume $\secat_K(p)=n$, and choose a (pointed) lift $\phi: K\to J^{n}_X(\mathcal{A})$ of the inclusion $\iota: K\hookrightarrow X$ through the $(n+1)$-fold fibred join $p_n:J^{n}_X(\mathcal{A})\to X$. Let
\[
h_t : CS\to X = K\cup_S CS
\]
be a (pointed) homotopy which contracts the cone to its base point (indicated above). Then $\red{h'_t=h_t|_S} :S\to X$ is a (pointed) null-homotopy of $\iota\circ\alpha = p_n\circ \phi\circ \alpha$. Since $p_n$ is a (pointed) fibration, we can lift {$\red{h'_t}$} to a (pointed) homotopy \red{$\ell_t\colon S\to {J^n_X}(\mathcal{A})$} from $\red{\ell_0={}}\phi\circ \alpha$ to a map \red{$\ell_1\colon S\to {J^n_X}(\mathcal{A})$ which actually takes} values in $J^n(A)=p_n^{-1}(\ast)$. Since $\alpha$ is a {(pointed)} cofibration, \red{and since $\ell_0$ is extended by $\phi$,} we can extend \red{$\ell_t$} to a (pointed) homotopy $k_t:K\to J^n_X(\A)$ from $\red{k_0={}}\phi$ to a map $\red{k_1={}}\phi':K\to J^n_X(\A)$ \red{whose restriction} $\phi'\circ \alpha$ takes values in $J^n(A)$. We therefore have a strictly commuting square
\[
\xymatrix{
S \ar[r]^\alpha \ar[d]^H & K \ar[d]^{\phi'} \\
J^n(A) \ar[r]^{i_n} & J^n_X(\A)
}
\]
where the map $H$ is obtained by restriction of domain and codomain. Note that $p_n\circ k_t$ is a homotopy from $\iota$ to $p_n\circ \phi'$.

The maps
\[
CS \stackrel{CH\;}{\longrightarrow} CJ^n(A) \stackrel{\kappa_n}\longrightarrow J^{n+1}_X(\A)\qquad\mbox{and}\qquad K \stackrel{\phi'}{\longrightarrow} J^n_X(\A) \stackrel{\jmath_n}{\longrightarrow} J^{n+1}_X(\mathcal{A})
\]
agree on $S$ and so together define a map $\sigma: X\to J^{n+1}_X(\A)$. We claim that $\sigma$ is a homotopy section of $p_{n+1}:J^{n+1}_X(\A)\to X$, and therefore $\secat(p)\le n+1 = \secat_K(p)+1$.

To prove the claim we exhibit an explicit homotopy from the identity map of $X=K\cup_S CS$ to $p_{n+1}\circ \sigma$. {By construction,} the homotopies $h_t: CS\to X$ and $p_n\circ k_t: K\to X$ agree on $S$, and therefore glue together to give {a homotopy $H_t\colon X\to X$. It is clear that $H_0$ is the identity. On the other hand, $H_1=p_{n+1}\circ \sigma$ because, again by construction, both maps send $CS$ to the basepoint $\ast$ and are given by $p_{n+1}\circ\jmath_n\circ\phi' = p_n\circ \phi'$ on $K$.}

The inequality $\secat_K(p)\le \secat(p)$ comes directly from item~(\ref{itemdosdel38}) in Proposition~\ref{secatprops}.
\end{proof}

\begin{cor}\label{relsecatupbyone}
Let $p:\mathcal{A}\to X$ be a fibration, and let $\varphi:X'\to X$ be any map. Suppose that $X'=K\cup_\alpha CS$ is the mapping cone of a map $\alpha: S\to K$. Then
\[
\secat_{\varphi|_K}(p)\le \secat_\varphi(p) \le \secat_{\varphi|_K}(p)+1.
\]
\end{cor}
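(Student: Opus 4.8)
The plan is to reduce everything to Proposition~\ref{secatupbyone} applied not to $p$ but to the pulled-back fibration $q:=\varphi^{*}p\colon \varphi^{*}\mathcal{A}\to X'$. Since $p$ is a fibration and pullbacks of fibrations are fibrations, $q$ is a (pointed) fibration over $X'=K\cup_{\alpha}CS$. Proposition~\ref{secatupbyone} then immediately gives
\[
\secat_{K}(q)\le \secat(q)\le \secat_{K}(q)+1,
\]
and by Definition~\ref{secatreldef} the middle term is $\secat(q)=\secat(\varphi^{*}p)=\secat_{\varphi}(p)$. So the only remaining task is to identify the outer terms with $\secat_{\varphi|_{K}}(p)$.

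For that I would invoke the pasting law for pullback squares. Writing $\iota_{K}\colon K\hookrightarrow X'$ for the inclusion, the fibration $\iota_{K}^{*}q$ is obtained by pasting the pullback square defining $q$ (along $\varphi$) with the pullback square along $\iota_{K}$; since $\varphi\circ\iota_{K}=\varphi|_{K}$, the composite square exhibits $\iota_{K}^{*}q$ as the pullback of $p$ along $\varphi|_{K}$, i.e.\ $\iota_{K}^{*}q=(\varphi|_{K})^{*}p$ canonically. Hence, again by Definition~\ref{secatreldef},
\[
\secat_{K}(q)=\secat\bigl(\iota_{K}^{*}q\bigr)=\secat\bigl((\varphi|_{K})^{*}p\bigr)=\secat_{\varphi|_{K}}(p).
\]
Substituting this into the displayed inequality yields exactly $\secat_{\varphi|_{K}}(p)\le \secat_{\varphi}(p)\le \secat_{\varphi|_{K}}(p)+1$, as required.

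I do not expect a genuine obstacle here; the entire content of the statement is carried by Proposition~\ref{secatupbyone}, and the corollary is just its reformulation in relative terms via the pullback. The only points needing a modicum of care are bookkeeping: first, one should note that $q=\varphi^{*}p$ still lies in the working category of well-pointed compactly generated spaces of CW homotopy type and is a pointed fibration, so that Proposition~\ref{secatupbyone} genuinely applies (this is standard, and one may first replace $\varphi$ by a homotopic map making $\alpha$ a closed cofibration, using Proposition~\ref{secatprops}(1) exactly as in the proof of Proposition~\ref{secatupbyone}); and second, the canonical identification $\iota_{K}^{*}q\cong(\varphi|_{K})^{*}p$ should be made compatibly with base points, which is automatic since all base points in sight are those induced from $\mathcal{A}$, $X$, $K$ and $CS$ in the evident way.
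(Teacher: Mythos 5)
Your proposal is correct and is exactly the paper's proof: the paper also applies Proposition~\ref{secatupbyone} to $\varphi^{*}p$, leaving the pasting-law identification $\iota_K^*(\varphi^*p)\cong(\varphi|_K)^*p$ implicit, which you have simply spelled out. No discrepancies.
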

\begin{proof} This is Proposition \ref{secatupbyone} applied to $\varphi^* p$.
\end{proof}

\begin{cor}\label{amejorarse}
For any fibration $p: \A\to X$ we have $\secat(p)\le \operatorname{cl}(X)$ where $\operatorname{cl}(X)$ denotes the cone length of $X$.
\end{cor}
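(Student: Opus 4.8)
The plan is to induct on the cone length, peeling off one cone at a time and invoking the ``increases by at most one'' estimate of Corollary~\ref{relsecatupbyone} at each stage. Recall that $\operatorname{cl}(X)\le n$ means there is a chain of cofibrations $\ast=X_0\hookrightarrow X_1\hookrightarrow\cdots\hookrightarrow X_n\simeq X$ in which each $X_{i+1}$ is a mapping cone $X_i\cup_{\alpha_i}CS_i$ of some map $\alpha_i\colon S_i\to X_i$; in particular $\operatorname{cl}(X_i)\le i$ for every $i$.

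First I would reformulate the statement in the relative setting, so that the induction closes: \emph{for every fibration $p\colon\mathcal{A}\to X$ and every map $\varphi\colon X'\to X$ with $\operatorname{cl}(X')\le n$, one has $\secat_\varphi(p)\le n$.} Taking $X'=X$ and $\varphi=\operatorname{id}_X$, so that $\secat_\varphi(p)=\secat(p)$, then yields the Corollary.

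The base case $n=0$ is immediate: here $X'$ is contractible, so $\secat_\varphi(p)\le\cat(X')=0$ by Proposition~\ref{secatprops}(3). For the inductive step, given $\operatorname{cl}(X')\le n+1$ I would choose a homotopy equivalence $X'\simeq K\cup_\alpha CS$ with $\operatorname{cl}(K)\le n$, and replace $\varphi$ by its composite with this equivalence. This is harmless, because pulling a fibration back along a homotopy equivalence leaves its sectional category unchanged --- a fact which follows from parts~(1) and~(2) of Proposition~\ref{secatprops} by the usual back-and-forth argument. Applying Corollary~\ref{relsecatupbyone} to the mapping cone $K\cup_\alpha CS$ then gives $\secat_\varphi(p)\le\secat_{\varphi|_K}(p)+1$, and the inductive hypothesis applied to $\varphi|_K\colon K\to X$ (legitimate since $\operatorname{cl}(K)\le n$) bounds the right-hand side by $n+1$.

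The only delicate point, and the step I would expect to cost the most care, is the reconciliation of the homotopy-invariant notion of cone length with the strict mapping cones required by Corollary~\ref{relsecatupbyone}; this is exactly what the homotopy-invariance observation above is designed to handle. Beyond that bookkeeping the argument is purely formal, and one should also note that it simultaneously reproves the (well known) bound $\cat(X)\le\operatorname{cl}(X)$ by specializing to $p=p_X$.
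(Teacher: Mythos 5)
Your proof is correct and carries out exactly the induction that the paper implicitly invokes when stating the corollary without proof, directly after Proposition~\ref{secatupbyone} and Corollary~\ref{relsecatupbyone}. The relative reformulation, the homotopy-invariance bookkeeping via Proposition~\ref{secatprops}(1)--(2), and the appeal to Corollary~\ref{relsecatupbyone} at each stage are precisely the right ingredients.
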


Corollary~\ref{amejorarse} is of course improved by the standard estimate $\secat(p)\le\cat(X)$. The real strength of Proposition~\ref{secatupbyone} will become apparent with the constructions in the next section.

\begin{rem}\label{neuralgico}
We could have given a simpler proof of Proposition~\ref{secatupbyone} using Lemma \ref{extendsection} in the next section. Note however that the above proof furnishes a strictly commuting cubical diagram
$$ 
\xymatrix{
 & S \ar[rr]^{\alpha} \ar[dl]_{H} \ar@{^{(}->}'[d][dd] & & K \ar[dd]^\iota \ar[dl]_{\phi'} \\
J^n(A)\ar[rr] \ar@{^{(}->}[dd] & & J^n_X({\mathcal A}) \ar[dd] & \\
 & CS \ar'[r][rr] \ar[ld]^{CH} & & X \ar[ld]_{\sigma} \\
CJ^n(A)\ar[rr]^{\kappa_n} & & J^{n+1}_X({\mathcal A}) &}
$$ 
in which $\phi'$ is a (pointed) homotopy lifting of $\iota$ through $p_n$ and $\sigma$ is a (pointed) homotopy section of $p_{n+1}$. This diagram will be especially important in what follows.
\end{rem}

\section{Hopf invariants for sectional category}\label{hifscsec}
In this section we introduce the Hopf invariants which determine whether the relative sectional category increases on attaching a cone. We give the main definitions in {Subsection} \ref{HopfDefns}, and then in {Subsection} \ref{HopfProducts} prove a fundamental result about Hopf invariants of product fibrations.

\subsection{Definitions}\label{HopfDefns}

Before giving the definition of the Hopf invariants considered in this paper, we record a couple of technical lemmas.

\begin{lem}\label{fibjoinssplit}
Given any fibration $p:\mathcal{A}\to X$ and $n\geq 1$, the $(n+1)$-fold fibred join $p_n:J^n_X(\mathcal{A})\rightarrow X$ splits after looping once. Consequently, if $Y=\widetilde{\Sigma}S$ is a suspension, then the induced maps of (pointed) homotopy groups $(p_n)_*:[Y,J^n_X(\mathcal{A})]\to [Y,X]$ and $(i_n)_*:[Y,J^n(A)]\to [Y,J^n_X(\mathcal{A})]$ are split surjective and split injective, respectively.
\end{lem}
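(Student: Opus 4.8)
The plan is to show that the fibred join $p_n \colon J^n_X(\mathcal{A}) \to X$ admits a homotopy section after applying $\Omega$, from which the two splitting claims about $[Y, -]$ follow formally. The key structural observation is that the fibred join carries a fibrewise ``co-H-type'' operation coming from the simplicial join coordinates. More precisely, for $n \ge 1$ the space $J^n_X(\mathcal{A})$ is built fibrewise out of (at least) two copies of $\mathcal{A}$ glued along cone directions, so there is a fibrewise ``pinch'' or ``fold'' structure. The cleanest route is to exhibit, for $n\ge 1$, a fibrewise map over $X$ from $J^n_X(\mathcal{A})$ to a fibrewise suspension (or fibrewise join with a point), and a fibrewise retraction back, realising $p_n$ as a fibrewise retract of a fibrewise co-grouplike object. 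Alternatively, and perhaps more transparently: the inclusion $\jmath_{n-1}\colon J^{n-1}_X(\mathcal{A}) \to J^n_X(\mathcal{A})$ together with the cone coordinate $\kappa_{n-1}$ exhibits $J^n_X(\mathcal{A})$ as a fibrewise mapping cone, hence fibrewise a ``half-smash'' type construction on which $\Omega$ (applied fibrewise, or equivalently after restricting attention to the based mapping space / loop space of the fibre and using the section $p$ has none of — so one works with the whole fibration).

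Here is the concrete approach I would carry out. First I would recall the general principle (James, Schwarz, or \cite{CLOT}) that if $p\colon \mathcal{A}\to X$ is a fibration then its $(n+1)$-fold fibred join $p_n$, for $n\ge 1$, is such that $\Omega p_n$ admits a section; the slick proof uses that $J^n_X(\mathcal{A})$ is the fibrewise join $\mathcal{A}\circledast_X \cdots \circledast_X \mathcal{A}$ and that a fibrewise join (with at least two factors) is fibrewise a suspension-like object over $X$, so that the total space, after looping the structure map, becomes a retract. Concretely, I would use the fibrewise co-H structure: for $n\ge 1$ there is a fibrewise comultiplication $J^n_X(\mathcal{A}) \to J^n_X(\mathcal{A}) \vee_X J^n_X(\mathcal{A})$ (fibrewise wedge over $X$), coming from pinching the top cone coordinate $t_n$, exactly as the difference pinch map $\nu$ in Remarks \ref{rmkpinchmap} but performed fibrewise. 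This makes $p_n$ into a fibrewise co-H-map; looping such a structure map yields a split epimorphism on the level of the fibration, i.e. $\Omega J^n_X(\mathcal{A})$ splits off $\Omega X$ via $\Omega p_n$ and a section. Then for the two consequences: given a suspension $Y = \widetilde{\Sigma} S$, the set $[Y, Z]$ is naturally isomorphic to $[S, \Omega Z]$, so the splitting of $\Omega p_n$ after looping immediately gives that $(p_n)_* \colon [Y, J^n_X(\mathcal{A})] \to [Y, X]$ is split surjective. For the second statement, I would use the fibration sequence $J^n(A) \xrightarrow{i_n} J^n_X(\mathcal{A}) \xrightarrow{p_n} X$ together with the just-established section (after looping): the long exact sequence of homotopy groups / the Puppe sequence in $[Y, -]$ gives a short exact sequence $0 \to [Y, J^n(A)] \xrightarrow{(i_n)_*} [Y, J^n_X(\mathcal{A})] \xrightarrow{(p_n)_*} [Y, X] \to 0$ which is split by the section, whence $(i_n)_*$ is split injective (and the splitting is given by the difference with the section composed with $p_n$, much as in Remarks \ref{rmkpinchmap}(c)). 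The group structure on $[Y, -]$ coming from $Y$ being a suspension is what makes ``split short exact sequence'' meaningful here and gives the honest direct-sum splitting.

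The main obstacle is the first step: establishing carefully that $p_n$, for $n \ge 1$, admits a homotopy section \emph{after looping}, i.e. that $\Omega p_n$ has a section. This requires making precise the ``fibrewise co-H'' or ``fibrewise suspension'' structure on the fibred join and checking that the relevant pinch map is compatible with $p_n$ over $X$ up to fibrewise homotopy — the bookkeeping with the simplicial coordinates $\mathbf{t} \in \Delta^n$ and the equivalence relation collapsing faces $t_i = 0$ is fiddly, and one must be careful that ``$n \ge 1$'' is exactly what guarantees there is a free cone coordinate to pinch along. Once this is in hand, everything else is formal manipulation of the exact sequence $[Y, \Omega J^n(A)] \to [Y, \Omega J^n_X(\mathcal{A})] \to [Y,\Omega X]$ (equivalently the $[Y,-]$ sequence of the fibration) together with the section. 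I would also remark that the section of $\Omega p_n$ need not be a loop map, but that is irrelevant for the statements about the functor $[Y,-]$, which only sees the underlying homotopy class.
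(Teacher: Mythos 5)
You have the right overall architecture: once one knows $\Omega p_n$ admits a section, the two splitting statements about $[Y,-]$ for $Y$ a suspension follow formally from the exact Puppe sequence of the fibration $J^n(A)\xrightarrow{i_n} J^n_X(\mathcal{A})\xrightarrow{p_n}X$, exactly as you say. However, your proposed route to the key first step has a genuine gap, and the paper's route is quite different and much more economical.

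The gap is this: there is no ``fibrewise co-H'' or fibrewise pinch structure on $p_n\colon J^n_X(\mathcal{A})\to X$, because that requires a fibrewise basepoint, i.e.\ a section of $p_n$ over $X$ --- and a section of $p_n$ is precisely what we do \emph{not} have in general (having one would mean $\secat(p)\le n$). The difference pinch map on the join $A\circledast B$ from Remarks~\ref{rmkpinchmap} lands in a \emph{wedge} and is defined using the basepoints of $A$ and $B$; its fibrewise analogue over $X$ would need a preferred point in each fibre of $p$, which does not exist. So ``pinching the top cone coordinate'' does not produce a map $J^n_X(\mathcal{A})\to J^n_X(\mathcal{A})\vee_X J^n_X(\mathcal{A})$, and the fibrewise-suspension heuristic (that each fibre of $p_n$ is a suspension) does not convert into a section of $\Omega p_n$, since looping a fibrewise suspension of a fibration without section has no reason to split.

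What the paper does instead is a clean reduction to the Ganea case: a (pointed) lifting function for $p$ yields a map $\chi\colon PX\to\mathcal{A}$ over $X$, and by functoriality of the fibred join for fibrewise maps this produces maps $G_n(X)=J^n_X(PX)\to J^n_X(\mathcal{A})$ over $X$, hence also over $\Omega X$ after looping. The known fact that $\Omega g_n(X)$ admits a homotopy section for $n\ge1$ (e.g.\ \cite[Exercise~2.1]{CLOT}; this ultimately rests on the James splitting $\Omega\widetilde\Sigma\Omega X\to\Omega X$ admitting the unit as a section, and the inclusion $G_1(X)\hookrightarrow G_n(X)$) then transports along $\Omega G_n(X)\to\Omega J^n_X(\mathcal{A})$ to a section of $\Omega p_n$. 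This avoids the nonexistent fibrewise co-H structure entirely. If you want to keep your argument self-contained rather than citing \cite{CLOT}, you should at least reduce to the path fibration $p_X\colon PX\to X$ --- which \emph{does} carry a distinguished section-free-but-pointed structure and for which $J^1_X(PX)\simeq\widetilde\Sigma\Omega X$ gives a genuine suspension to which the James splitting applies --- rather than trying to produce a pinch on a general $J^n_X(\mathcal{A})$.
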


\begin{proof}
Using a (pointed) lifting function for $p$ we may construct a map $\chi: PX\to \mathcal{A}$ rendering the following diagram commutative:
    \[
    \xymatrix{
    PX  \ar[rr]^{\chi} \ar[rd]_{p_X} & & \mathcal{A} \ar[ld]^p \\
      &  X &
      }
      \]
      The fibred join construction is functorial for fibrewise maps, and so we obtain diagrams
         \[
    \xymatrix{
    G_n(X)  \ar[rr] \ar[rd]_{g_n(X)} & & J^n_X(\mathcal{A}) \ar[ld]^{p_n} \\
      &  X &
      },
      \qquad
       \xymatrix{
    \Omega G_n(X)  \ar[rr] \ar[rd]_{\Omega g_n(X)} & & \Omega J^n_X(\mathcal{A}) \ar[ld]^{\Omega p_n} \\
      &  \Omega X &
      }
      \]
      for each $n\ge 0$. Since $\Omega g_n(X)$ admits a homotopy section for $n\ge1$ (see \cite[Exercise 2.1]{CLOT}, for instance) so does $\Omega p_n$, and the result follows.
\end{proof}
\begin{lem}\label{extendsection}
Let $S\stackrel{\alpha}{\to} K\stackrel{\iota}{\to} X=K\cup_\alpha CS$ be a cofibration sequence, and let $\rho: Z\to X$ {and $\phi: K\to Z$ be maps with homotopies} $\phi\circ \alpha\simeq *$ and $\rho\circ \phi \simeq \iota$. If $\rho_\ast:[{\widetilde{\Sigma}} S,Z]\to [{\widetilde{\Sigma}} S,X]$ is surjective, then there exists a map $\sigma: X\to Z$ and pointed homotopies $\sigma\circ \iota \simeq \phi$ and $\rho\circ \sigma \simeq \mathrm{Id}_X$.
\end{lem}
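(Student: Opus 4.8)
The plan is to build $\sigma$ in two steps: first extend $\phi$ over the cone using the null-homotopy $\phi\circ\alpha\simeq\ast$, and then correct the resulting map by a suitable element of $[\widetilde{\Sigma}S,Z]$ so that post-composing with $\rho$ recovers the identity. The ambient algebra is that of the cofibre sequence $S\xrightarrow{\alpha}K\xrightarrow{\iota}X\xrightarrow{q}\widetilde{\Sigma}S$ attached to the mapping cone $X=K\cup_\alpha CS$, where $X/K=CS/S=\Sigma S\simeq\widetilde{\Sigma}S$. Pinching the cone $CS\subseteq X$ half-way yields the standard co-action $\mu\colon X\to X\vee\widetilde{\Sigma}S$, which for every space $W$ makes $[X,W]$ into a set equipped with an action of the group $[\widetilde{\Sigma}S,W]$, natural in $W$. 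The structural input, classical cofibre-sequence theory (see e.g.\ \cite{CLOT}), is that the orbits of this action are exactly the nonempty fibres of $\iota^{\ast}\colon[X,W]\to[K,W]$, and that, $\mu$ being defined on $X$ independently of $W$, any map $\rho\colon Z\to X$ is equivariant over the group homomorphism $\rho_{\ast}\colon[\widetilde{\Sigma}S,Z]\to[\widetilde{\Sigma}S,X]$ (a consequence of $\rho\circ\nabla=\nabla\circ(\rho\vee\rho)$).

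First I would note that a choice of pointed null-homotopy of $\phi\circ\alpha$ defines a map $CS\to Z$ agreeing with $\phi\circ\alpha$ on $S$; together with $\phi$, this glues over the pushout $X=K\cup_\alpha CS$ to a map $\sigma_0\colon X\to Z$ with $\sigma_0\circ\iota=\phi$. Restricting $\rho\circ\sigma_0$ to $K$ and using $\rho\circ\phi\simeq\iota$ gives
\[
\iota^{\ast}[\rho\circ\sigma_0]=[\rho\circ\phi]=[\iota]=\iota^{\ast}[\mathrm{Id}_X]\in[K,X],
\]
so $[\rho\circ\sigma_0]$ and $[\mathrm{Id}_X]$ lie in a common orbit: there is $d\in[\widetilde{\Sigma}S,X]$ with $[\rho\circ\sigma_0]=d\cdot[\mathrm{Id}_X]$. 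Now I would invoke surjectivity of $\rho_{\ast}$ to pick $e\in[\widetilde{\Sigma}S,Z]$ with $\rho_{\ast}(e)=d^{-1}$, and set $\sigma:=e\cdot\sigma_0$, represented by an honest map through $\mu$. Since $\sigma$ and $\sigma_0$ lie in one orbit of the $[\widetilde{\Sigma}S,Z]$-action they have equal image under $\iota^{\ast}$, whence $\sigma\circ\iota\simeq\sigma_0\circ\iota=\phi$; and by equivariance $[\rho\circ\sigma]=\rho_{\ast}(e)\cdot[\rho\circ\sigma_0]=d^{-1}\cdot(d\cdot[\mathrm{Id}_X])=[\mathrm{Id}_X]$, i.e.\ $\rho\circ\sigma\simeq\mathrm{Id}_X$. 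This delivers $\sigma$ together with the two required pointed homotopies.

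The point needing genuine care --- and, I think, the real content of the lemma --- is the structural assertion that the co-action orbits coincide with the fibres of $\iota^{\ast}$. The implication ``same orbit $\Rightarrow$ same $\iota^{\ast}$-image'' is immediate from $\mu\circ\iota=\mathrm{incl}_1\circ\iota$. For the converse one uses the homotopy extension property of the cofibration $\iota$ to replace the homotopy $\rho\circ\phi\simeq\iota$ by an honest equality on $K$; then $\rho\circ\sigma_0$ and $\mathrm{Id}_X$ become two extensions over $CS$ of one fixed map on $S$, and one recognises their difference (formed by gluing the two cones into $\widetilde{\Sigma}S$) as the class $d$ realising the orbit relation --- a routine but slightly fiddly chase with the pinch map. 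Everything is carried out with pointed maps and pointed homotopies, and well-pointedness of $S$ (a standing hypothesis in the paper) is what makes the identification $X/K\simeq\widetilde{\Sigma}S$ appearing in the statement legitimate.
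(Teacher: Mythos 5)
Your proof is correct and is essentially the same co-action argument that underlies \cite[Lemma~6.28]{CLOT}, which the paper simply cites ("with the same proof") rather than reproducing: extend $\phi$ over the cone via the null-homotopy to get $\sigma_0$, then use transitivity of the $[\widetilde{\Sigma}S,-]$-action on the $\iota^*$-fibre, equivariance of post-composition by $\rho$, and surjectivity of $\rho_*$ to correct $\sigma_0$ to a $\sigma$ with $\rho\circ\sigma\simeq\mathrm{Id}_X$. Your reconstruction of that argument, including the identification of the key input (orbits of the co-action coincide with nonempty fibres of $\iota^*$), matches the intended proof.
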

\begin{proof}
This is a slight generalization of \cite[Lemma 6.28]{CLOT}, with the same proof.
\end{proof}

\begin{defn}\label{Hopfinv}
Let $p:\mathcal{A}\to X$ be a fibration. Suppose that $X=K\cup_\alpha CS$ is the mapping cone of a map $\alpha: S\to K$. Suppose also that $\secat_K(p)\le n$, and let $\phi: K\to J^n_X(\mathcal{A})$ be a (pointed) homotopy lifting of the inclusion $\iota: K\hookrightarrow X$ through $p_n: J^n_X(\A)\to X$. As in the proof of Proposition \ref{secatupbyone}, {consider a pointed-}homotopy commutative diagram
\[
\xymatrix{
S \ar[r]^\alpha \ar[d]^H & K \ar[d]^{\phi} \\
J^n(A) \ar[r]^{i_n} & J^n_X(\A).
}
 \]
 If $n\ge 1$ and $S$ is a {reduced} suspension, then by Lemma \ref{fibjoinssplit} the pointed-homotopy class of the map $H: S \to J^n(A)$ depends only on the pointed-homotopy classes of $\alpha$ and~$\phi$. Any representative of this class will be denoted $\red{H(p)={}}H^n_{\phi,\alpha}(p)$ and called the {\em Hopf invariant} associated to the data $(p,n,\phi,\alpha)$. The set of all such Hopf invariants as $\phi$ ranges over all possible (pointed) lifts is denoted $\red{\mathcal{H}(p)={}}\mathcal{H}_{\alpha}^n(p)$ and called the {\em Hopf set} associated to $(p,n,\alpha)$.
 \end{defn}

 \begin{prop}\label{caracterizacionshida}
 Under the conditions of Definition \ref{Hopfinv}, we have $\secat(p)\le n$ if and only if the Hopf set $\mathcal{H}^n_\alpha(p)$ contains the trivial element.
 \end{prop}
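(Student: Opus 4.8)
The plan is to prove both implications using the machinery already in place, with Proposition~\ref{secatupbyone} (and the cubical diagram of Remark~\ref{neuralgico}) handling the ``if'' direction and Lemma~\ref{extendsection} handling the ``only if'' direction.

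For the ``if'' direction, suppose the Hopf set $\mathcal{H}^n_\alpha(p)$ contains the trivial element. Then there is a (pointed) homotopy lifting $\phi\colon K\to J^n_X(\mathcal{A})$ of $\iota$ through $p_n$ such that the associated map $H\colon S\to J^n(A)$ is null-homotopic. First I would replace $\alpha$ by a cofibration, exactly as in the first paragraph of the proof of Proposition~\ref{secatupbyone}, so that we have a strictly commuting square with $H$ (after a further homotopy, using that $S$ is a suspension and $i_n$ is split injective by Lemma~\ref{fibjoinssplit}) replaced by the constant map. Since $H\simeq *$, the composite $\kappa_n\circ CH\colon CS\to J^{n+1}_X(\mathcal{A})$ factors, up to homotopy, through the basepoint; more precisely, the maps $\kappa_n\circ CH$ on $CS$ and $\jmath_n\circ\phi'$ on $K$ agree on $S$ and glue to give $\sigma\colon X\to J^{n+1}_X(\mathcal{A})$, and the argument of Proposition~\ref{secatupbyone} (or the cube of Remark~\ref{neuralgico}) shows $\sigma$ is a homotopy section of $p_{n+1}$. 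But when $H$ is null-homotopic we can do better: the section $\sigma$ can be deformed so that it maps $CS$ into $J^n(A)$ and hence factors through $\jmath_n\colon J^n_X(\mathcal{A})\to J^{n+1}_X(\mathcal{A})$, yielding a homotopy section of $p_n$ itself. Concretely, the null-homotopy of $H$ gives a null-homotopy of $CH\colon CS\to CJ^n(A)$ rel the cone point direction, which composed with $\kappa_n$ shows $\kappa_n\circ CH$ is homotopic (rel $S$, after adjusting $\phi'$ by the same homotopy on $S$ transported via the cofibration) to $\jmath_n\circ(\text{something landing in }J^n_X(\mathcal{A}))$; collecting these, $\sigma$ is homotopic to a section factoring through $\jmath_n$, so $\secat(p)\le n$ by Theorem~\ref{puntodepartida}.

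For the ``only if'' direction, suppose $\secat(p)\le n$, so $p_n\colon J^n_X(\mathcal{A})\to X$ admits a (pointed) homotopy section $\sigma$. I would apply Lemma~\ref{extendsection} with $Z=J^n_X(\mathcal{A})$, $\rho=p_n$, and $\phi$ the given lift of $\iota$ over $K$: the hypothesis $\phi\circ\alpha\simeq *$ holds because $H^n_{\phi,\alpha}(p)$ is defined (so $\phi\circ\alpha$ factors through $i_n$, and one then uses that $\iota\circ\alpha\simeq *$ together with the fibration property to see $\phi\circ\alpha$ can be taken into the fibre; but really what we need is just that $H$ exists), and $\rho\circ\phi\simeq\iota$ by definition of a homotopy lifting; the surjectivity hypothesis $\rho_*=(p_n)_*\colon[\widetilde{\Sigma}S,J^n_X(\mathcal{A})]\to[\widetilde{\Sigma}S,X]$ holds by Lemma~\ref{fibjoinssplit}. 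Lemma~\ref{extendsection} then produces a map $\sigma'\colon X\to J^n_X(\mathcal{A})$ with $\sigma'\circ\iota\simeq\phi$ and $p_n\circ\sigma'\simeq\mathrm{Id}_X$, i.e.\ a section of $p_n$ extending $\phi$. Now restrict attention to the cone: $\sigma'|_{CS}\colon CS\to J^n_X(\mathcal{A})$ is a null-homotopy of $\sigma'|_S\simeq\phi\circ\alpha\simeq i_n\circ H$, hence $i_n\circ H\simeq *$ in $J^n_X(\mathcal{A})$; since $i_n$ is split injective on $[\widetilde{\Sigma}S,-]$ by Lemma~\ref{fibjoinssplit}, it follows that $H\simeq *$, i.e.\ $H^n_{\phi,\alpha}(p)=0$ is the trivial element of $\mathcal{H}^n_\alpha(p)$.

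I expect the main obstacle to be the ``if'' direction: making rigorous the claim that a null-homotopy of the Hopf invariant $H$ allows the section $\sigma$ of $p_{n+1}$ to be pushed down to a section of $p_n$. The subtlety is bookkeeping of the homotopies on $S$ along the cofibration $\alpha$ — one must ensure that the deformation of $CH$ to a map into (the cone on) a point is compatible, on the subspace $S$, with a simultaneous deformation of $\phi'$, so that the two glue throughout the homotopy. This is exactly the kind of argument already carried out in Proposition~\ref{secatupbyone} and encoded in Lemma~\ref{extendsection}, so the cleanest route is in fact to bypass the ``if'' direction's explicit gluing entirely and instead note that $H\simeq *$ is precisely the hypothesis ``$\phi\circ\alpha\simeq *$'' needed to invoke Lemma~\ref{extendsection} with $Z=J^n_X(\mathcal{A})$, $\rho=p_n$: its conclusion is a map $X\to J^n_X(\mathcal{A})$ which is a homotopy section of $p_n$, giving $\secat(p)\le n$ directly. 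This makes both directions symmetric applications of Lemma~\ref{extendsection}, modulo the split (in)jectivity supplied by Lemma~\ref{fibjoinssplit}, and is the form I would write up.
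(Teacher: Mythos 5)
Your ``if'' direction, after the self-correction in your final paragraph, is exactly the paper's argument: a null-homotopic $H^n_{\phi,\alpha}(p)$ gives $\phi\circ\alpha\simeq\ast$, and Lemma~\ref{extendsection} (with $Z=J^n_X(\mathcal{A})$, $\rho=p_n$, surjectivity from Lemma~\ref{fibjoinssplit}) upgrades $\phi$ to a homotopy section of $p_n$, so $\secat(p)\le n$. You are right that the explicit gluing of $CH$ and $\phi'$ is unnecessary detour here.

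Your ``only if'' direction has a genuine gap, in two related places. First, you write that ``the hypothesis $\phi\circ\alpha\simeq\ast$ holds because $H^n_{\phi,\alpha}(p)$ is defined.'' This is false. The Hopf invariant being defined only means that $\phi\circ\alpha$ is homotopic to a map $i_n\circ H$ landing in the fibre $J^n(A)$; it does not mean $\phi\circ\alpha$ is null-homotopic. Indeed, $\phi\circ\alpha\simeq\ast$ is equivalent to $i_n\circ H\simeq\ast$, which by the split injectivity of $(i_n)_*$ (Lemma~\ref{fibjoinssplit}) is equivalent to $H\simeq\ast$ --- which is exactly what you are trying to prove. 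Invoking Lemma~\ref{extendsection} here is therefore circular: its hypothesis $\phi\circ\alpha\simeq\ast$ is the very conclusion you want. Second, and more fundamentally, you are trying to prove a universal statement --- that the Hopf invariant of a \emph{given, arbitrary} lift $\phi$ is trivial --- but the proposition only asserts that the Hopf \emph{set} contains the trivial element, i.e.\ that \emph{some} lift has trivial Hopf invariant. In general a lift $\phi$ with $\secat_K(p)\le n$ may well have nonzero Hopf invariant even when $\secat(p)\le n$; the content of the Hopf set is precisely that different lifts give different invariants. The paper's proof of this direction is elementary and sidesteps all of this: given a homotopy section $\sigma$ of $p_n$, take $\phi:=\sigma\circ\iota$. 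This is a lift of $\iota$, and $\phi\circ\alpha\simeq\sigma\circ(\iota\circ\alpha)\simeq\sigma\circ\ast=\ast$ because $\iota\circ\alpha$ is null in the mapping cone. So for \emph{this} choice of $\phi$ the constant map is a valid representative $H$, and $0\in\mathcal{H}^n_\alpha(p)$. No appeal to Lemma~\ref{extendsection} or to split injectivity is needed.
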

 \begin{proof}
 Suppose there is a lift $\phi: K\to J^n_X(\mathcal{A})$ such that the associated Hopf invariant $H^n_{\phi,\alpha}(p):S\to J^n(A)$ is null-homotopic. Then $\phi\circ\alpha\simeq *$ and so Lemma \ref{extendsection} gives a homotopy section $\sigma$ of $p_n$ extending $\phi$ {up to pointed homotopy.}

 Conversely, suppose that $\secat(p)\le n$ and let $\sigma: X\to J^n_X(\mathcal{A})$ be a homotopy section of $p_n$. Then $\phi:=\sigma\circ\iota: K\to J^n_X(\mathcal{A})$ is a homotopy lifting of $\iota$ through $p_n$ satisfying $\phi\circ\alpha\simeq \ast$, whose associated Hopf invariant is therefore trivial (as explained in Definition~\ref{Hopfinv}, the last conclusion uses Lemma~\ref{fibjoinssplit}).
 \end{proof}

\begin{prop}\label{uniqueness}
Under the conditions of Definition \ref{Hopfinv}, suppose that the fibre $A$ is $(r-1)$-connected with $r\ge1$. If $\hdim(K)<(n+1)r+n$, then the Hopf set $\mathcal{H}^n_\alpha(p)$ consists of a single element.
\end{prop}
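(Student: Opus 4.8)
The plan is to reduce the statement to the uniqueness, up to pointed homotopy, of a pointed lift of the inclusion $\iota\colon K\hookrightarrow X$ through the $(n+1)$-fold fibred join $p_n\colon J^n_X(\mathcal{A})\to X$. The Hopf set $\mathcal{H}^n_\alpha(p)$ is nonempty because $\secat_K(p)\le n$, and by the uniqueness clause already recorded in Definition~\ref{Hopfinv} the Hopf invariant $H^n_{\phi,\alpha}(p)$ depends only on the pointed homotopy class of the lift $\phi$. Hence, once we know that any two pointed lifts of $\iota$ through $p_n$ are pointed-homotopic, it follows that $\mathcal{H}^n_\alpha(p)$ consists of a single element. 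Note that the cone structure $X=K\cup_\alpha CS$ plays no further role in this reduction.

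First I would record the connectivity of the fibre. By the homotopy equivalence~\eqref{lasidentificacionesobvias} we have $J^n(A)\simeq\widetilde{\Sigma}^nA^{\wedge n+1}$, and since $A$ is $(r-1)$-connected, iterating the standard connectivity bound for smash products shows that $A^{\wedge n+1}$ is $\big((n+1)r-1\big)$-connected; therefore $J^n(A)$ is $\big((n+1)r+n-1\big)$-connected. In particular, the hypothesis $\hdim(K)<(n+1)r+n$ says precisely that $J^n(A)$ is $\hdim(K)$-connected. Now take a (finite-dimensional) CW model for $K$ and pull $p_n$ back along $\iota$ to obtain a pointed fibration $q\colon E\to K$ with fibre $J^n(A)$; using the homotopy lifting property in the usual way, pointed lifts of $\iota$ through $p_n$ modulo pointed homotopy correspond to pointed sections of $q$ modulo pointed vertical homotopy. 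Standard obstruction theory now compares two pointed sections $s_0,s_1$ of $q$: the successive obstructions to a pointed vertical homotopy between them are cohomology classes of $K$ lying in groups of the form $H^j\big(K;\pi_j(J^n(A))\big)$ (with coefficients possibly twisted by $\pi_1(K)$, which is immaterial here), and each of these vanishes --- for $j\le\hdim(K)$ because $\pi_j(J^n(A))=0$ by the connectivity estimate, and for $j>\hdim(K)$ because $H^j(K;-)=0$. Thus $q$ has a unique pointed section up to pointed vertical homotopy, whence $\iota$ has a unique pointed lift through $p_n$ up to pointed homotopy, and the proposition follows.

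The argument is essentially a dimension count, and I expect the only point requiring genuine care to be the passage between pointed lifts of $\iota$ (modulo pointed homotopy) and pointed sections of $q=\iota^{*}p_n$ (modulo pointed vertical homotopy), together with the base-point bookkeeping in the obstruction theory, so that the vanishing of the obstruction groups is applied to the correct comparison problem. Once this is in place, the connectivity computation and the hypothesis $\hdim(K)<(n+1)r+n$ finish the proof immediately.
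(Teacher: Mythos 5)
Your argument is correct and rests on exactly the same numerical input as the paper's: $A$ being $(r-1)$-connected makes $J^n(A)\simeq\widetilde{\Sigma}^nA^{\wedge n+1}$ a $\big((n+1)r+n-1\big)$-connected space, and the hypothesis on $\hdim(K)$ is precisely what makes the dimension count close. Where you diverge is in the mechanics: the paper observes that this connectivity makes $p_n$ an $\big((n+1)r+n\big)$-equivalence and then invokes the standard fact (Spanier, Cor.~7.6.23) that such a map induces a bijection $[K,J^n_X(\mathcal{A})]\to[K,X]$ whenever $\hdim(K)$ is below the equivalence range, which immediately gives uniqueness of the homotopy lift of $\iota$. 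You instead pull $p_n$ back along $\iota$ and run obstruction theory for vertical homotopies of sections of the resulting fibration over $K$, with obstruction groups $H^j(K;\pi_j(J^n(A)))$. Both routes are dimension counts built on the same connectivity estimate; yours is the unwound version of the citation and is slightly more self-contained, at the cost of the bookkeeping you flag yourself, namely passing between homotopy lifts of $\iota$ up to pointed homotopy and strict sections of $\iota^*p_n$ up to pointed vertical homotopy. For the proposition you only need that every lift is pointed-homotopic to a strict one (by HLP for the pointed fibration $p_n$) and that any two sections are vertically homotopic (your obstruction computation), so the full bijection you assert is more than necessary and the argument as given is sound.
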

\begin{proof}
Note that we are assuming $\secat_K(p)\le n$, so that the Hopf set is non-empty. If $A$ is $(r-1)$-connected then $J^n(A)$ is $\big((n+1)r + n-1\big)$-connected. Then $p_n$ is an $\big((n+1)r + n\big)$-equivalence, and it follows that the induced map $(p_n)_*:[K,J^n_X(\mathcal{A})]\to [K,X]$ is bijective when
$\hdim(K)<(n+1)r + n$ (see~\cite[Corollary 7.6.23]{Spanier}). Thus the lifting $\phi$ of $\iota$ is unique up to homotopy.
\end{proof}

\begin{ex}[Berstein--Hilton--Hopf invariants \cite{B-H,CLOT,Iwase,IwaseAinfty,Stanley}]\label{ejemploBHH}
Let $K$ be a path-connected space with $\cat(K)\le n\ge 1$, and let $\alpha: S^q\to K$ be a map with $q\ge 1$. The cofiber $X=K\cup_\alpha CS$ of $\alpha$ satisfies $\cat(X)\le n+1$. Berstein and Hilton introduced in~\cite{B-H} generalized Hopf invariants to detect whether $\cat(X)\le n$. Here we give the modification of their definition used by Iwase in \cite{Iwase}.

Let $s: K\to G_n(K)$ be a (pointed) section of $g_n(K):G_n(K)\to K$, the $n$-th Ganea fibration of $K$. Then we define:
\begin{itemize}
\item $H_s'(\alpha):=s\circ \alpha -G_n(\alpha)\circ s_0 \in \pi_{q}(G_n(K))$ where $s_0$ is the canonical section of $g_n(S^q)$.
\item $H_s(\alpha)\in \pi_q(F_n(K))$ the unique (up to homotopy) map satisfying $H_s'(\alpha)= i_n(K)\circ H_s(\alpha)$.
\end{itemize}

Both of these elements will be called the \emph{Berstein--Hilton--Hopf invariant of $\alpha$ associated to $s$}. The set of such elements as $s$ ranges over all (homotopy classes of) such sections is denoted $\mathcal{H}'(\alpha)\subseteq \pi_q(G_n(K))$ or $\mathcal{H}(\alpha)\subseteq \pi_q(F_n(K))$, and called the \emph{Berstein--Hilton--Hopf set of $\alpha$}. If $K$ is a CW-complex, it is shown in~\cite[Section~6.4]{CLOT} that
\begin{equation}\label{recuperarclot}
\mbox{{\it $\cat(X)\le n$ if and only if $\,0\in \mathcal{H}(\alpha)$, provided $\max\{\dim(K),2\}\le q$.}}
\end{equation}

We now make explicit the relationship of these Berstein--Hilton--Hopf invariants with the Hopf invariants discussed in this section. Let $\iota: K\to X$ denote the inclusion into the cofiber. By Proposition \ref{secatprops} we have $\cat_K(X)\le\cat(K)\le n$, and indeed
\begin{equation}\label{primerafactorizacion}
\phi=G_n(\iota)\circ s:K\to G_n(X)
\end{equation}
is a (pointed) lifting of $\iota$ through $g_n(X)$. Then the associated Hopf invariant $H^n_{\phi,\alpha}(g_0(X)):S^q\to F_n(X)$ satisfies $H^n_{\phi,\alpha}(g_0(X)) = F_n(\iota)\circ H_s(\alpha)$. This follows from the definitions, together with the diagram
\[
\xymatrix{
F_n(K) \ar[r]^{i_n(K)} \ar[d]_{F_n(\iota)} & G_n(K) \ar[d]^{G_n(\iota)} \\
F_n(X) \ar[r]^{i_n(X)} & G_n(X)
}
\]
and the observation that $G_n(\iota)\circ H_s'(\alpha)\simeq G_n(\iota)\circ s\circ\alpha$ since $G_n$ is a homotopy functor and $\iota\circ\alpha$ is null-homotopic. {Thus}
\begin{equation}\label{lainclusion}
F_n(\iota)_\ast\big( \mathcal{H}(\alpha)\big)\subseteq \mathcal{H}^n_{\alpha}(g_0(X)).
\end{equation}

Finally, we note that {(\ref{lainclusion}) can be improved to an equality under the hypothesis in~(\ref{recuperarclot}). Namely, if} $K$ is a connected complex {and $q\ge\max\{\dim(K),2\}$, then} the maps $F_n(\iota):F_n(K)\to F_n(X)$ and $G_n(\iota):G_n(K)\to G_n(X)$ are $(q+1)$-equivalences for all $n\ge 1$, by \cite[Lemma 6.26]{CLOT}. Consequently, any (pointed) homotopy lift $\phi$ of $\iota$ through $g_n(X)$ arises as in~(\ref{primerafactorizacion}) for some $s$, so that in fact $F_n(\iota)_\ast\big( \mathcal{H}(\alpha)\big)= \mathcal{H}^n_{\alpha}(g_0(X))$. Furthermore, the triviality of $H^n_{\phi,\alpha}(g_0(X))$ is equivalent to the triviality of $H_s(\alpha)$, so that Proposition~\ref{caracterizacionshida} recovers~(\ref{recuperarclot}).
\end{ex}

\begin{ex}[cat-Hopf invariants of spheres]\label{cathopfsphere}
Let $q\ge 2$. We may regard the $q$-sphere $S^q$ as the cofiber $C^-S^{q-1} \cup_\alpha CS^{q-1}$ of the inclusion $\alpha: S^{q-1}\hookrightarrow C^-S^{q-1}$ of the base of the cone $C^-S^{q-1}=S^{q-1}\times [-1,0]/S^{q-1}\times\{-1\}$. The base point of this cone is $[\ast,0]$, where $\ast$ is the base point of $S^{q-1}$, and $\alpha$ is a pointed cofibration. Here, $\cat_{C^-S^{q-1}}(S^q)=0$ and $\cat(S^q)=1$. Since $n=0$ and the fibration $g_0(S^q): G_0(S^q)\to S^q$ does not split after looping, the uniqueness statement in Definition~\ref{Hopfinv} breaks down. We can, however, define a Hopf invariant $H^0(S^q)$ as follows.

 Fix a pointed homotopy equivalence $\xi: S^q\to \widetilde \Sigma S^{q-1}$ between $S^q$ defined as above and the reduced suspension of $S^{q-1}$, and let $\xi^{-1}$ denote a {pointed} homotopy inverse of $\xi$. Denote by $\eta: S^{q-1}\to \Omega\widetilde\Sigma S^{q-1}$ the standard adjunction, given by $\eta(x)(t) = [x,t]$. We define $\tilde\eta: S^{q-1}\to F_0(S^q)$ to be the composition
  $$S^{q-1} \stackrel{\eta}{\to} \Omega \widetilde \Sigma S^{q-1} \stackrel{\Omega \xi^{-1}}{\to} \Omega S^q = F_0(S^q),$$
 and note that $\tilde\eta$ is a pointed map.

    With these preliminaries, we can construct a commuting diagram
\[
\xymatrix{
 & S^{q-1} \ar[rr] \ar[ld]_{\tilde\eta} \ar@{^{(}->}'[d][dd] & & C^-S^{q-1} \ar[dd] \ar[ld]_{\phi'} \\
 F_0(S^q)\ar[rr] \ar@{^{(}->}[dd] & & G_0(S^q) \ar[dd] & \\
& CS^{q-1} \ar'[r][rr] \ar[ld]_{C\tilde\eta} & & S^q \ar[ld]_{\sigma}\\
CF_0(S^q) \ar[rr] & & G_1(S^q) &
}
 \]
 where $\phi'([x,u])(t) = \tilde\eta(x)\big( (1+u)t\big)$, {which is of course a homotopy lifting of the inclusion $C^-S^{q-1}\hookrightarrow S^q$ through $g_0(S^q)$,} and
 \[
 \sigma([x,u]) =\left\{
\begin{array}{lr}
\left\langle \hspace{.1mm}{\phi'([x,u])
\hspace{.3mm}, \hspace{.2mm}
\phi'([x,u])}
\mid 1,0\rule{0mm}{3.2mm}\right\rangle, & -1\leq  u\leq 0; \\
\left\langle \tilde\eta(x), *\mid 1-u,u\rule{0mm}{3.2mm}\right\rangle, & 0\leq u\leq 1.
\end{array}
\right.
\]
It is straightforward to check that the diagram commutes and $\sigma: S^q\to G_1(S^q)$ is a homotopy section of $g_1(S^q)$. {The situation is now analogous to that in Remark~\ref{neuralgico} and we} may therefore consider the homotopy class of $\tilde\eta$ as the Hopf invariant $H^0(S^q)$. {Note that, by construction, $H^0(S^q)$ is homotopic to the adjoint of the identity on $\widetilde{\Sigma}S^{q-1}$ and, therefore, can be identified up to homotopy {with} the inclusion of the bottom cell in $F_0(S^q)=\Omega S^q\simeq S^{q-1}\cup e^{2(q-1)}\cup\cdots$.}
\end{ex}

\subsection{Products} \label{HopfProducts}
Let $p:{\mathcal A}\to X$ and $q:{\mathcal B}\to Y$ be fibrations with respective fibres $A$ and $B$. \red{The goal of this section ---and a major accomplishment of this paper--- is the explicit construction (in Theorem~\ref{Hopfproducts} below) of a Hopf invariant $H(p\times q)$ of the product $p\times q: \mathcal{A}\times\mathcal{B}\to X\times Y$ in terms of given Hopf invariants $H(p)$ and $H(q)$ of the factors $p$ and $q$. The strength of the construction comes from the tight homology control we get on $H(p\times q)$ (Theorems~\ref{eli} and~\ref{morhipsmafterdesusp} below).}

\medskip
\red{In slightly more detail,} \blue{for each pair of non-negative integers $n$ and $m$, \red{there are} maps $\Phi^{A,B}_{n,m}$ and $\Psi^{\mathcal{A},\mathcal{B}}_{n,m}$ fitting in \red{a} commutative diagram
\begin{equation}\label{shufflewjoins}
\xymatrix{
J^ n(A)\circledast J^m(B)\ar[rr]^-{{\Phi_{n,m}^{A,B}}} \ar[d]_{{W}}&& J^{n+m+1}(A\times B)\ar[d]^{{i_{n+m+1}}} \\
J^{n+1}_X({\mathcal A})\times J_Y^{m}({\mathcal B})\cup J_X^{n}({\mathcal A})\times J_Y^{m+1}({\mathcal B})\ar[rr]^-{{\Psi_{n,m}^{{\mathcal A},{\mathcal B}}}} \ar[d]_{{W'}}&& J_{X\times Y}^{n+m+1}({\mathcal A}\times {\mathcal B})\ar[d]^{{(p\times q)_{n+m+1}}} \\
J^{n+1}_X({\mathcal A})\times J_Y^{m+1}({\mathcal B})\ar[rr]^{{p_{n+1}\times q_{m+1}}} &&X\times Y}
\end{equation}
where $W$ and $W'$ are \red{obvious maps induced by the inclusions in~(\ref{ijkappa})\,---\,c.f.~(\ref{doswhiskersobvios}) below.} For Ganea fibrations, such diagrams (\ref{shufflewjoins}) have already been (at least implicitly) constructed and used, in particular in \cite{Iwase} and \cite{IwaseAinfty}. \red{In} Section~\ref{secsiete}, we give a \red{new,} explicit, and natural construction of the maps $\Phi^{A,B}_{n,m}$ and $\Psi^{\mathcal{A},\mathcal{B}}_{n,m}$, which exhibits the \red{former one} as a sort of topological shuffle map, enabling us to understand the effect of $\red{\Phi_{n,m}^{A,B}}$ in homology. \red{In particular:}
\begin{thm}\label{eli}
Let $p\ge2$. The degree $d$ of the map $S^{(n+m+2)p-1}\to S^{(n+m+2)p-1}$ induced by restriction of \red{the composition}
$$ \xymatrix{
F_n(S^p)\circledast F_m(S^p) \ar[r]^{\;\;\,\Phi_{n,m}^{\Omega S^p,\Omega S^p}} & F_{n+m+1}(S^p \times S^p) \ar[r]^{\hspace{3.4mm}\bar{\chi}_{n+m+1}} & F_{n+m+1}(S^p)
}$$
to the bottom \red{cell} is given by
$$\pm d=\#(S^+_{n+1,m+1})+(-1)^p\#(S^-_{n+1,m+1})$$
where $S^+_{n+1,m+1}$ (resp.~$S^-_{n+1,m+1}$) stands for the set of $(n+1,m+1)$ shuffles of positive (resp.~negative) signature.
\end{thm}
\red{The map $\bar{\chi}_{n+m+1}$ has been introduced in Proposition~\ref{TCcatHopf}. The detailed construction of the maps in~(\ref{shufflewjoins}), and the proof of Theorem~\ref{eli} is postponed to Section~\ref{secsiete}. Here we highlight a few key instances.}
\begin{exs}\label{degree-2cells}
\begin{enumerate}[(a)]
\item
The map $S^{4p-1}\to S^{4p-1}$ induced by restriction to the bottom cell of the composite
$$\xymatrix{
F_1(S^p)\circledast F_1(S^p) \ar[rr]^-{\Phi_{1,1}^{\Omega S^p,\Omega S^p}} && F_3(S^p \times S^p) \ar[r]^-{\bar{\chi}_3} & F_3(S^p)
}$$
has degree $\pm(4+2(-1)^p)$.
\item The composite
$$\xymatrix{
F_1(S^p)\circledast F_0(S^p) \ar[rr]^-{\Phi_{1,0}^{\Omega S^p, \Omega S^p}} && F_2(S^p\times S^p) \ar[r]^-{\bar{\chi}_2} & F_2(S^p)
}$$
induces a map $S^{3p-1}\to S^{3p-1}$ by restriction to the bottom cell. The degree of this map is $\pm(2+(-1)^p)$.
\item The composite
$$\xymatrix{
F_0(S^p)\circledast F_0(S^p) \ar[rr]^-{\Phi_{0,0}^{\Omega S^p, \Omega S^p}} && F_1(S^p\times S^p) \ar[r]^-{\bar{\chi}_1} & F_1(S^p)
}$$
induces a map $S^{2p-1}\to S^{2p-1}$ by restriction to the bottom cell. The degree of this map is $\pm(1+(-1)^p)$.
\end{enumerate}
\end{exs}}

\red{We now explain how the maps $\Phi_{n,m}^{A,B}$ can be used to construct (in Theorem~\ref{Hopfproducts} below) a Hopf invariant $H(p\times q)$ out of Hopf invariants $H(p)$ and $H(q)$.} We begin by recalling the exterior join construction \cite{Bau,Marcum,Stanley}. {The following considerations are meant to prepare grounds for Proposition~\ref{exteriorjoin} below, which is} a slight generalization of \cite[Proposition 2.9]{Stanley}.

Suppose we are given commutative diagrams
\begin{equation}\label{10i}
\xymatrix{
S(i) \ar[r]^{f(i)} \ar@{^{(}->}[d] & K(i) \ar[d]^{g(i)}\\
CS(i) \ar[r]_{F(i)} & X(i)
}
\end{equation}
($i=1,2$), and consider the commutative cube
\[
\xymatrix{
 & S(1)\times S(2) \ar[ld]_{f(1)\times f(2)\hspace{3mm}} \ar@{^{(}->}'[d][dd] \ar@{^{(}->}[rr] & & CS(1)\times S(2) \ar[ld]^{\hspace{3mm}F(1)\times f(2)} \ar@{^{(}->}[dd] \\
K(1)\times K(2) \ar[rr]^{\hspace{2cm}g(1)\times1} \ar[dd]_{1\times g(2)} &
& X(1)\times K(2) \ar[dd]_>>>>>>{1\times g(2)} & \\
 & S(1)\times CS(2) \ar@{^{(}->}'[r][rr] \ar[ld]_{f(1)\times F(2)\hspace{2mm}} & & CS(1)\times CS(2) \ar[dl]^{\hspace{3mm}F(1)\times F(2)} \\
K(1) \times X(2) \ar[rr]_{g(1)\times1} & & X(1)\times X(2). &
}
\]
Recall $S(1)\circledast S(2)$ is the push-out of the ``back'' face of the above cube, with whisker map given by the obvious inclusion $S(1)\circledast S(2)\hookrightarrow CS(1)\times CS(2)$. Likewise, let $X(1)\times K(2)\cup K(1)\times X(2)$ stand for the push-out of the ``front'' face of the above cube, and let
\begin{equation}\label{doswhiskersobvios}
S(1)\circledast S(2) \stackrel{W}{\longrightarrow} X(1)\times K(2)\cup K(1)\times X(2) \stackrel{W'}{\longrightarrow} X(1)\times X(2)
\end{equation}
be the obvious whisker maps.
We obtain a commutative diagram
\begin{equation}\label{diagresultante}\xymatrix{
S(1)\circledast S(2) \ar[rr]^-{W}\ar@{^{(}->}[d] & &  X(1)\times K(2) \cup K(1) \times X(2) \ar[d]^{{W'}} \\
CS(1)\times CS(2) \ar[rr]^{{F(1)\times F(2)}} & & X(1)\times X(2).
}\end{equation}

\begin{prop}\label{exteriorjoin}
If the two initial diagrams~(\ref{10i}) are homotopy push-outs {(respectively, strict push-outs),} then so is~{(\ref{diagresultante}).} Furthermore, this construction is natural in the following sense. Suppose for $i=1,2$ there are maps $H(i): S(i)\to {\overline{S(i)}}$, $\phi(i): K(i)\to {\overline{K(i)}}$ and $\sigma(i): X(i)\to {\overline{X(i)}}$ {fitting in the commutative cube}
\[
\xymatrix{
 & S(i) \ar[ld]^{H(i)} \ar@{^{(}->}'[d][dd] \ar[rr] & & K(i) \ar[ld]^{\phi(i)} \ar[dd] \\
{\overline{S(i)}} \ar[rr] \ar@{^{(}->}[dd] & & {\overline{K(i)}} \ar[dd] & \\
 & CS(i) \ar'[r][rr] \ar[ld]^{CH(i)} & & X(i) \ar[dl]^{\sigma(i)} \\
C{\overline{S(i)}} \ar[rr] & & {\overline{X(i)}} &
}
\]
Then there results a commutative cube
\[
\resizebox{1\textwidth}{!}{\xymatrix{
 & S(1)\circledast S(2) \ar[dl]_{H(1)\circledast H(2)\hspace{3mm}} \ar@{^{(}->}'[d][dd] \ar[rr] & & X(1)\times K(2) \cup K(1)\times X(2) \ar[dl]^{\hspace{3mm}{W''}} \ar[dd] \\
{\overline{S(1)}}\circledast {\overline{S(2)}} \ar[rr] \ar@{^{(}->}[dd] & & {\overline{X(1)}} \times {\overline{K(2)}} \cup {\overline{K(1)}} \times {\overline{X(2)}} \ar[dd] & \\
 & CS(1)\times CS(2) \ar'[r][rr] \ar[dl]^>>>>>>>>>>>>>>{\hspace{6mm}CH(1)\times CH(2)} & & X(1)\times X(2) \ar[dl]^{\hspace{3mm}\sigma(1)\times\sigma(2)} \\
C{\overline{S(1)}}\times C{\overline{S(2)}} \ar[rr] & & {\overline{X(1)}} \times {\overline{X(2)}} &
}}
\]
where $W''$ is the whisker map comming from the assumption that $X(1)\times K(2) \cup K(1)\times X(2)$ is a pushout.
\end{prop}

\begin{rem}\label{conosenelextjoicon}
As shown in~\cite[Lemma 2.8]{Stanley}, $CS(1) \times CS(2)$ is homeomorphic to $C(S(1) \circledast S(2))$ in such a way that the inclusion $S(1) \circledast S(2) \hookrightarrow CS(1) \times CS(2)$ corresponds to the inclusion of the base of the cone.
\end{rem}

The following is a generalisation to arbitrary fibrations of a result due to Iwase (\cite[Proposition 5.8]{Iwase}, \cite[Theorem 5.5]{IwaseAinfty}, see also \cite{Harper}).

\begin{thm}\label{Hopfproducts}
Let $p:{\mathcal A}\to X$ and $q:{\mathcal B}\to Y$ be fibrations with respective fibres $A$ and $B$. Suppose that $X=K\cup_\alpha CS$ is the cofibre of a map $\alpha: S\to K$, and $Y=L\cup_\beta CT$ is the cofibre of a map $\beta: T\to L$, where $S$ and $T$ are reduced suspensions. If $\secat_K(p)\le n$ with Hopf invariant $H^n_{\phi,\alpha}(p):S\to J^n(A)$ {associated to a (pointed) homotopy lifting $\phi \colon K \to J^n_X(\mathcal{A})$ of the inclusion $K\hookrightarrow X$ through $p_n$,} and $\secat_L(q)\le m$ with Hopf invariant $H^m_{\psi,\beta}(q): T\to J^m(B)$ {associated to a (pointed) homotopy lifting $\psi \colon L \to J^m_Y(\mathcal{B})$ of the inclusion $L\hookrightarrow Y$ through $q_m$ (as in Definition~\ref{Hopfinv}),} then $\secat_{X\times L \cup K\times Y}(p\times q)\le n+m+1$ with Hopf invariant the composition
\[
\xymatrix{
S\circledast T \ar[rrr]^-{H^n_{\phi,\alpha}(p)\circledast H^m_{\psi,\beta}(q)} &&& J^n(A)\circledast J^m(B) \ar[rr]^-{\Phi_{n,m}^{A,B}} & & J^{n+m+1}(A\times B).
}
\]
\end{thm}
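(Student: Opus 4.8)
The plan is to feed the strictly commuting cubes furnished by Remark~\ref{neuralgico} into the naturality clause of the exterior join construction (Proposition~\ref{exteriorjoin}), and then to splice the result onto the diagram~(\ref{shufflewjoins}) from Section~\ref{secsiete}; the construction of~(\ref{shufflewjoins})---in particular of the topological shuffle map $\Phi^{A,B}_{n,m}$---is the one genuinely substantial ingredient, carried out separately later, so here I take it as given. Since a Hopf invariant depends only on the pointed homotopy class of its lift, I would begin by running the proof of Proposition~\ref{secatupbyone} on the given lift $\phi$: this produces a pointed homotopy lift $\phi'\simeq\phi$ of the inclusion $\iota\colon K\hookrightarrow X$ through $p_n$, a pointed homotopy section $\sigma_1$ of $p_{n+1}$, and a representative $H\simeq H^n_{\phi,\alpha}(p)$ of the Hopf invariant, all fitting into a strictly commuting cube whose one face is the defining square $\big(S\xrightarrow{\alpha}K,\ CS\hookrightarrow X\big)$ of the cofibre $X$, whose parallel face is the square~(\ref{ijkappa}) for $p$ at level $n$, and whose connecting maps are $H$, $\phi'$, $CH$ and $\sigma_1$ (this is precisely the cube displayed in Remark~\ref{neuralgico}). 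Doing the same for $q$ yields $\psi'\simeq\psi$, a homotopy section $\sigma_2$ of $q_{m+1}$, a representative $H'\simeq H^m_{\psi,\beta}(q)$, and the analogous strict cube over~(\ref{ijkappa}) at level $m$.

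These two cubes are exactly morphisms between ``initial diagrams'' of the kind required by the naturality part of Proposition~\ref{exteriorjoin}, so that proposition produces a strictly commuting cube linking the exterior join of the two cofibre squares to the exterior join of the two copies of~(\ref{ijkappa}). For the first of these exterior joins the source is $S\circledast T$, with whisker maps $\omega\colon S\circledast T\to X\times L\cup K\times Y$ and $\omega'\colon X\times L\cup K\times Y\hookrightarrow X\times Y$; by Remark~\ref{conosenelextjoicon} this realises $X\times Y$ as $(X\times L\cup K\times Y)\cup_\omega C(S\circledast T)$ with $C(S\circledast T)=CS\times CT$. The second exterior join is exactly the left-hand column of~(\ref{shufflewjoins}), with whisker maps $W$ and $W'$ as there. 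Reading off the top face and a side face of the naturality cube gives the two identities
\[
W''\circ\omega \;=\; W\circ(H\circledast H')
\qquad\text{and}\qquad
W'\circ W'' \;=\; (\sigma_1\times\sigma_2)\circ\omega',
\]
where $W''\colon X\times L\cup K\times Y\to J^{n+1}_X(\mathcal{A})\times J^m_Y(\mathcal{B})\cup J^n_X(\mathcal{A})\times J^{m+1}_Y(\mathcal{B})$ is the connecting whisker map, built from $\sigma_1\times\psi'$ on $X\times L$ and $\phi'\times\sigma_2$ on $K\times Y$. Since $S$ and $T$ are reduced suspensions, so is $S\circledast T\simeq\widetilde{\Sigma}(S\wedge T)$ (Remarks~\ref{rmkpinchmap}(b)), so the Hopf-invariant machinery of Definition~\ref{Hopfinv} applies to the cofibre presentation of $X\times Y$ just obtained.

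It now remains to assemble the pieces. Put $\Phi_{\mathrm{lift}}:=\Psi^{\mathcal{A},\mathcal{B}}_{n,m}\circ W''\colon X\times L\cup K\times Y\to J^{n+m+1}_{X\times Y}(\mathcal{A}\times\mathcal{B})$. Using the bottom square of~(\ref{shufflewjoins}) and the second identity above,
\[
(p\times q)_{n+m+1}\circ\Phi_{\mathrm{lift}} \;=\; (p_{n+1}\times q_{m+1})\circ W'\circ W'' \;=\; \big((p_{n+1}\circ\sigma_1)\times(q_{m+1}\circ\sigma_2)\big)\circ\omega',
\]
which is pointed-homotopic to $\omega'$ because $\sigma_1$ and $\sigma_2$ are homotopy sections; as $(p\times q)_{n+m+1}$ is a pointed fibration this homotopy lift straightens to a genuine pointed lift of $\omega'$, whence $\secat_{X\times L\cup K\times Y}(p\times q)\le n+m+1$ by Proposition~\ref{secatprops}(6). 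To identify the associated Hopf invariant, combine the first identity above with the top square of~(\ref{shufflewjoins}):
\[
\Phi_{\mathrm{lift}}\circ\omega \;=\; \Psi^{\mathcal{A},\mathcal{B}}_{n,m}\circ W\circ(H\circledast H') \;=\; i_{n+m+1}\circ\Phi^{A,B}_{n,m}\circ(H\circledast H').
\]
By Definition~\ref{Hopfinv} this says precisely that $\Phi^{A,B}_{n,m}\circ(H\circledast H')$ is the Hopf invariant of the data $(p\times q,\ n+m+1,\ \Phi_{\mathrm{lift}},\ \omega)$; and since $H\simeq H^n_{\phi,\alpha}(p)$, $H'\simeq H^m_{\psi,\beta}(q)$ and the join of pointed maps respects homotopy, this is the claimed composition $\Phi^{A,B}_{n,m}\circ\big(H^n_{\phi,\alpha}(p)\circledast H^m_{\psi,\beta}(q)\big)$.

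The only real obstacle, once~(\ref{shufflewjoins}) is granted, is the diagram-chasing needed to recognise the cubes of Remark~\ref{neuralgico} as the required morphisms of initial diagrams and to keep straight which whisker map is which in the three nested exterior joins; after that the conclusion, and in particular the precise form of the Hopf invariant, is forced by the commutativity of~(\ref{shufflewjoins}).
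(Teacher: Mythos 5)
Your proof follows exactly the same route as the paper's: feed the two strictly commuting cubes supplied by Remark~\ref{neuralgico} into the naturality clause of Proposition~\ref{exteriorjoin}, then splice the resulting cube with diagram~(\ref{shufflewjoins}) to read off both the lift of $X\times L\cup K\times Y\hookrightarrow X\times Y$ and the Hopf invariant. The only difference is presentational --- you spell out as explicit identities (the two whisker-map equations and the verification that $\Phi_{\mathrm{lift}}$ lifts $\omega'$) what the paper compresses into one large spliced diagram and the remark ``one easily sees'' --- so this is the paper's argument, carried out in more detail.
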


\begin{proof}
We have commutative cubes
\[
\resizebox{1\textwidth}{!}{
\xymatrix{
 & S \ar[rr]^{\alpha} \ar[dl]_{H^n_{\phi,\alpha}(p)} \ar@{^{(}->}'[d][dd] & & K \ar[dd] \ar[dl]_{\phi'} & & T \ar[rr]^{\beta} \ar[ld]_{H^m_{\psi,\beta}(q)} \ar@{^{(}->}'[d][dd] & & L \ar[dd] \ar[ld]_{\psi'} \\
J^n(A)\ar[rr] \ar@{^{(}->}[dd] & & J^n_X({\mathcal A}) \ar[dd] & & J^m(B)\ar[rr] \ar@{^{(}->}[dd] & & J^m_Y({\mathcal B}) \ar[dd] & \\
 & CS \ar'[r][rr] \ar[ld]^>>>>>>>>>>{\;CH^n_{\phi,\alpha}(p)} & & X \ar[ld]_{\sigma} && CT \ar'[r][rr] \ar[ld]^>>>>>>>>>>{\;CH^m_{\psi,\beta}(q)} & & Y \ar[ld]_{\tau}\\
CJ^n(A)\ar[rr] & & J^{n+1}_X({\mathcal A}) && CJ^m(B) \ar[rr] & & J^{m+1}_Y({\mathcal B}) &
}}
\]
constructed as in Proposition \ref{secatupbyone}, where $\sigma$ and $\tau$ are (pointed) homotopy sections of $p_{n+1}$ and $q_{m+1}$ respectively. Applying the naturality statement of Proposition \ref{exteriorjoin}, and splicing the top and right faces of the resulting cube with diagram (\ref{shufflewjoins}), yields a large diagram
\[
\resizebox{1\textwidth}{!}{\xymatrix{
S\circledast T \ar[r]^-{{W}} \ar[d]_{H^n_{\phi,\alpha}(p)\circledast H^m_{\psi,\beta}(q)} & X\times L \cup K \times Y \ar[d] \ar[r]^{{W'}} &  X\times Y \ar[d]_{\sigma\times\tau} \\
J^n(A)\circledast  J^m(B) \ar[r] \ar[d]_{\Phi_{n,m}^{A,B}} & J^{n+1}_X({\mathcal A})\times J_Y^{m}({\mathcal B})\cup J_X^{n}({\mathcal A})\times J_Y^{m+1}({\mathcal B}) \ar[r] \ar[d]_{\Psi_{n,m}^{{\mathcal A},{\mathcal B}}} & J^{n+1}_X({\mathcal A})\times J_Y^{m+1}({\mathcal B}) \ar[d]_{p_{n+1}\times q_{m+1}} \\
J^{n+m+1}(A\times B) \ar[r] & J_{X\times Y}^{n+m+1}({\mathcal A}\times {\mathcal B}) \ar[r]^{(p\times q)_{n+m+1}} & X\times Y.
}}
\]
One easily sees that the middle vertical composition is a (pointed) homotopy lifting of the inclusion $X\times L \cup K \times Y\hookrightarrow X\times Y$ through $(p\times q)_{n+m+1} :J_{X\times Y}^{n+m+1}({\mathcal A}\times {\mathcal B})\to X\times Y$, hence $\secat_{X\times L \cup K\times Y}(p\times q)\le n+m+1$. The left-hand vertical composition is the Hopf invariant associated to this lifting.
\end{proof}

{Using the maps $\bar\chi_m$ defined in Proposition~\ref{TCcatHopf} we now have:}

\begin{cor}\label{forTC2cell}
Let $K$ and $L$ be path-connected spaces, and let
$X=K\cup_\alpha e^{q+1}$ and $Y=L\cup_\beta e^{r+1}$ for maps $\alpha: S^q\to K$ and $\beta: S^r\to L$ {with $q,r\ge1$.} Suppose that $\cat(K)\le n$ and $\cat(L)\le m$ {with $n,m\ge1$,} and let $s: K\to G_n(K)$ and $\sigma: L\to G_m(L)$ be (pointed) sections of the respective Ganea fibrations. Then:
\begin{enumerate}[(a)]
\item $\cat(X\times Y)\le n+m+1$ if the composition
\[
\xymatrix{
S^q\circledast S^r \ar[rr]^-{H_s(\alpha)\circledast H_\sigma(\beta)} && F_n(K)\circledast F_m(L) \ar[r]^-{\Phi_{n,m}^{\Omega K,\Omega L}} & F_{n+m+1}(K\times L)
}
\]
is null-homotopic.
\item $\TC(X)\le 2n+1$ if the composition
\[
\xymatrix{
S^q\circledast S^q \ar[rr]^-{H_s(\alpha)\circledast H_s(\alpha)} && F_n(K)\circledast F_n(K)  \ar[r]^-{\Phi^{\Omega K,\Omega K}_{n,n}} &  F_{2n+1}(K\times K) \ar[r]^>>>>>{\bar\chi_{2n+1}} & F_{2n+1}(K)
}
\]
is null-homotopic. If in addition $K=S^p$ (taking $n=1$) with $p\geq2$ and $q\leq3p-3$, then the vanishing of the above composition is a necessary and sufficient condition for $\TC(X)\leq3$.
\item If $K=S^p$, where $p\ge 2$ and $q\le 3p-3$, then $\TC_{X\times S^p}(X)\le 2$ if and only if the composition
\[
\xymatrix{
S^q\circledast S^{p-1} \ar[rr]^-{H_s(\alpha)\circledast H^0(S^p)} && F_1(S^p)\circledast F_0(S^p) \ar[r]^>>>>{\Phi^{\Omega S^p,\Omega S^p}_{1,0}} & F_2(S^p\times S^p) \ar[r]^>>>>>{\bar\chi_2} & F_2(S^p)
}
\]
is null-homotopic.
\end{enumerate}
\end{cor}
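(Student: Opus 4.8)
All three parts follow from Theorem~\ref{Hopfproducts} (or, for (c), from a mild variant of its proof), combined with Proposition~\ref{caracterizacionshida}, the identification of the Berstein--Hilton--Hopf invariants with the Hopf invariants of Definition~\ref{Hopfinv} carried out in Example~\ref{ejemploBHH}, the naturality of the shuffle map $\Phi$ in its two arguments, and---for (b) and (c)---the comparison maps of Proposition~\ref{TCcatHopf}. For (a), write $X=K\cup_\alpha CS^q$, $Y=L\cup_\beta CS^r$ and apply Theorem~\ref{Hopfproducts} to the path fibrations $g_0(X)$ and $g_0(Y)$, using the liftings $\phi=G_n(\iota_X)\circ s$ and $\psi=G_m(\iota_Y)\circ\sigma$ of Example~\ref{ejemploBHH}, whose Hopf invariants are $F_n(\iota_X)\circ H_s(\alpha)$ and $F_m(\iota_Y)\circ H_\sigma(\beta)$. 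Since $g_0(X)\times g_0(Y)=g_0(X\times Y)$ and $X\times Y$ is the mapping cone of the whisker map $W\colon S^q\circledast S^r\to X\times L\cup K\times Y$ (using $CS^q\times CS^r\cong C(S^q\circledast S^r)$, Remark~\ref{conosenelextjoicon}), Theorem~\ref{Hopfproducts} yields a lift of $X\times L\cup K\times Y\hookrightarrow X\times Y$ through $g_{n+m+1}(X\times Y)$ whose Hopf invariant is $\Phi^{\Omega X,\Omega Y}_{n,m}$ evaluated on $(F_n(\iota_X)\circ H_s(\alpha))\circledast(F_m(\iota_Y)\circ H_\sigma(\beta))$; by naturality of $\Phi$ this equals $F_{n+m+1}(\iota_X\times\iota_Y)$ post-composed with the displayed composition. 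Proposition~\ref{caracterizacionshida} applied to $g_0(X\times Y)$ and this cone decomposition then gives $\cat(X\times Y)=\secat(g_0(X\times Y))\le n+m+1$ as soon as the Hopf invariant, hence the displayed composition, is null-homotopic.

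Part (b) is the case $Y=X$, $L=K$, $\beta=\alpha$, $\sigma=s$, $m=n$ of (a), post-composed with the comparison map $\chi_{2n+1}\colon G_{2n+1}(X\times X)\to G^{\TC}_{2n+1}(X)$ of Proposition~\ref{TCcatHopf}: this converts the lift of (a) into a lift of $X\times K\cup K\times X\hookrightarrow X\times X$ through $(\pi_X)_{2n+1}=g^{\TC}_{2n+1}(X)$, and since $\chi_{2n+1}$ restricts to $\bar\chi_{2n+1}$ on fibres, the corresponding Hopf invariant is $\bar\chi_{2n+1}$ post-composed with that of (a); by naturality of $\bar\chi$ this is $F_{2n+1}(\iota)$ composed with the displayed composition in (b). Applying Proposition~\ref{caracterizacionshida} to $\pi_X$ and the cone decomposition $X\times X=(X\times K\cup K\times X)\cup_W C(S^q\circledast S^q)$ gives $\TC(X)\le 2n+1$ once this displayed composition vanishes.

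For (c), $\cat(S^p)=1$ forces $n=1$; for the second factor use the hemisphere decomposition $S^p=C^-S^{p-1}\cup_{\alpha'}CS^{p-1}$ and the commuting cube of Example~\ref{cathopfsphere}, where $\cat_{C^-S^{p-1}}(S^p)=0$ and the role of the Hopf invariant is played by $H^0(S^p)\colon S^{p-1}\to F_0(S^p)$; this cube serves as the second-factor cube with ``$m=0$'' in the proof scheme of Theorem~\ref{Hopfproducts}. Splicing the two cubes through the naturality of Proposition~\ref{exteriorjoin} together with diagram~(\ref{shufflewjoins}) for $(n,m)=(1,0)$, and then post-composing with the fibrewise map $g_0(X)\times g_0(S^p)\to(\mathrm{id}_X\times\iota)^*\pi_X$, $(\gamma,\delta)\mapsto\gamma^{-1}(\iota\circ\delta)$---whose fibre map is $\bar\chi^X\circ(\mathrm{id}_{\Omega X}\times\Omega\iota)$---produces a lift of $X\times C^-S^{p-1}\cup S^p\times S^p\hookrightarrow X\times S^p$ through $(\pi_X)_2$ whose Hopf invariant, after applying naturality of $\Phi$ and $\bar\chi$, is $F_2(\iota)$ composed with the displayed composition $\bar\chi_2\circ\Phi^{\Omega S^p,\Omega S^p}_{1,0}\circ(H_s(\alpha)\circledast H^0(S^p))$. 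As $X\times S^p=(X\times C^-S^{p-1}\cup S^p\times S^p)\cup_W C(S^q\circledast S^{p-1})$ is obtained by attaching a single cone, Proposition~\ref{caracterizacionshida} gives the ``if'' direction immediately. For ``only if'', collapsing the contractible hemisphere $C^-S^{p-1}$ shows that the base space $X\times C^-S^{p-1}\cup S^p\times S^p$ has the homotopy type of a complex of dimension $\max\{q+1,2p\}$, which is strictly less than $3(p-1)+2=3p-1$ exactly because $q\le 3p-3$ and $p\ge2$; as $\Omega X$ is $(p-2)$-connected, Proposition~\ref{uniqueness} shows the Hopf set consists of a single element, so it must be trivial whenever $\TC_{X\times S^p}(X)\le2$. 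Finally $\iota\colon S^p\to X$ is a $q$-equivalence, hence $F_2(\iota)=J^2(\Omega\iota)\simeq\widetilde{\Sigma}^2(\Omega\iota)^{\wedge3}$ is a $(q+2p-1)$-equivalence and so is injective on $\pi_{q+p}\cong[S^q\circledast S^{p-1},-]$ (since $q+p<q+2p-1$); therefore the displayed composition is itself null-homotopic, which completes (c).

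I expect the main obstacle to be the bookkeeping: propagating the Hopf invariant through the splicing of Proposition~\ref{exteriorjoin} with diagram~(\ref{shufflewjoins}), through the two naturality statements for $\Phi$ and $\bar\chi$, and---in (c)---through the passage to the $\TC$-fibration; and then verifying the two numerical bounds ($\hdim<3p-1$, needed for Proposition~\ref{uniqueness}, and $q+p<q+2p-1$, needed for injectivity of $F_2(\iota)$) that are responsible for the hypothesis $q\le3p-3$ in part (c).
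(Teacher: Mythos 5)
Your proposal is correct and follows essentially the same approach as the paper: parts (a) and (b) apply Theorem~\ref{Hopfproducts}, the naturality of $\Phi$ and $\bar\chi$, and Proposition~\ref{TCcatHopf}, while part (c) uses the hemisphere decomposition of Example~\ref{cathopfsphere}, the singleton Hopf set via Proposition~\ref{uniqueness} (with the same dimension count $\max\{q+1,2p\}<3p-1$), and the $(q+2p-1)$-equivalence $F_2(\iota)$ to pass between the two compositions. The only difference is one of presentation: you make explicit the cube-splicing that produces the lift and its Hopf invariant in (c), where the paper simply asserts that the Hopf set is represented by the lower composition in its displayed diagram.
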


\begin{proof}
\begin{enumerate}[(a)]
\item Naturality of the maps $\Phi_{n,m}$, together with Theorem \ref{Hopfproducts} applied to the product of the fibrations $g_0(X):G_0(X)\to X$ and $g_0(Y):G_0(Y)\to Y$, yield $\cat_{X\times K\cup L\times Y}(X\times Y)\le n+m+1$ with Hopf invariant the lower composition in the diagram
\[\xymatrix{
S^q\circledast S^r \ar[rr]^-{H_s(\alpha)\circledast H_\sigma(\beta)} \ar[drr] && F_n(K)\circledast F_m(L) \ar[d] \ar[rr]^-{\Phi_{n,m}^{\Omega K,\Omega L}} && F_{n+m+1}(K\times L)\ar[d] \\
 && F_n(X)\circledast F_m(Y) \ar[rr]^-{\Phi_{n,m}^{\Omega X,\Omega Y}} && F_{n+m+1}(X\times Y).
}\]
Here the {vertical} maps are induced by inclusions, {whereas the slanted map is $H^n_{G_n(\iota)\circ s,\alpha}(g_0(X))\circledast H^m_{G_m(\iota)\circ \sigma,\beta}(g_0(Y))$}. The result follows.

\item By naturality of the maps $\Phi_{n,n}$ and $\bar\chi_{2n+1}$ we obtain a diagram
\[
\xymatrix{
S^q\circledast S^q \ar[rr]^-{H_s(\alpha)\circledast H_s(\alpha)} \ar[drr] && F_n(K)\circledast F_n(K) \ar[d] \ar[r]^-{\Phi_{n,n}^{\Omega K,\Omega K}} & F_{2n+1}(K\times K)\ar[d]\ar[r]^-{\bar\chi_{2n+1}} & F_{2n+1}(K) \ar[d] \\
 && F_n(X)\circledast F_n(X) \ar[r]^-{\Phi_{n,n}^{\Omega X,\Omega X}} & F_{2n+1}(X\times X) \ar[r]^-{\bar\chi_{2n+1}} & F_{2n+1}(X).
}
\]

The diagram of Proposition \ref{TCcatHopf} shows that composition with $\bar\chi_{2n+1}$ takes Hopf invariants for $\cat(X\times X)$ to Hopf invariants for $\TC(X)$. Therefore $\TC_{X\times K\cup K\times X}(X)\le 2n+1$ with Hopf invariant the lower composition, which yields the first assertion. For the second assertion, a homology argument shows that the map $F_3(S^p)\to F_3(X)$ is a $(3p+q-1)$-equivalence (compare \cite[proof of Lemma~6.26]{CLOT}), so that the vanishing of the upper composition is equivalent to the vanishing of the lower composition. The final conclusion then follows from Proposition~\ref{uniqueness} since the hypothesis $q\leq3p-3$ implies that the Hopf set under consideration is a singleton.

\item {We can safely assume $p\le q$ because if $\alpha$ is null-homotopic, then in fact $H_s(\alpha)$ is null-homotopic and $\TC(X)\le2$. In particular, Proposition~\ref{uniqueness} and the discussion in Example~\ref{ejemploBHH} imply that $\mathcal{H}(\alpha)$ is a singleton. Think of $S^p$ with the cell structure in Example~\ref{cathopfsphere}, so} $X\times S^p\,=\,X\times C^-S^{p-1} \,\cup\, S^p\times S^p\,\cup\, e^{p+q+1}$. The usual deformation retraction of the south hemisphere of $S^p$ to its south pole shows that the inclusion $X\times \ast \,\cup\, S^{p}\times S^p\hookrightarrow X\times C^-S^{p-1} \,\cup\, S^p\times S^p$ is a homotopy equivalence. Thus $\TC_{X\times C^-S^{p-1} \cup S^p\times S^p}(X)= \TC_{X\times \ast \cup S^p\times S^p}(X)\le{\mathrm{cl}(X\times \ast \cup S^p\times S^p)}\le2$. Further, since $\Omega X$ is $(p-2)$-connected and $$\dim(X\times \ast \cup S^p\times S^p)=\max\{q+1,2p\}<3(p-1)+2 = 3p-1,$$ the Hopf set under consideration is a singleton and is given by the lower composition in the diagram
\[
\xymatrix{
S^q\circledast S^{p-1} \ar[rr]^-{H_s(\alpha)\circledast H^0(S^p)} \ar[rrd] && F_1(S^p)\circledast F_0(S^p) \ar[d] \ar[r]^-{\hspace{2mm}\Phi^{\Omega S^p,\Omega S^p}_{1,0}} & F_2(S^p\times S^p)\ar[d] \ar[r]^{\hspace{3mm}\bar\chi_2} & F_2(S^p) \ar[d]\\
 && F_1(X)\circledast F_0(X) \ar[r]^-{\Phi_{1,0}^{\Omega X,\Omega X}} & F_{2}(X\times X) \ar[r]^-{\bar\chi_{2}} & F_{2}(X).
}
\]
This proves the {``if''} statement. A homology argument shows that the map $F_2(S^p)\to F_2(X)$ is a $(2p+q-1)$-equivalence (compare \cite[proof of Lemma~6.26]{CLOT}), and so the lower composition is essential if and only if the upper composition is. This completes the proof.
 \end{enumerate}
 \end{proof}

\section{Application: The topological complexity of $2$-cell complexes}\label{secaptc2cw}

A $2$-cell complex is a finite complex $X = S^p \cup_\alpha e^{q+1}$ presented as the mapping cone of a map of spheres $\alpha: S^{q}\to S^p$, where $q\ge p\ge 1$. In this section we investigate the topological complexity of $2$-cell complexes, using the results of the previous section together with the results of Section~\ref{secsiete} presented in Examples~\ref{degree-2cells}.

\medskip
Recall that the Lusternik--Schnirelmann category of $X$ is determined as follows.
For $p=q$ and
\begin{itemize}
\item $\deg(\alpha)=\pm1$, $X$ is contractible and $\cat(X)=\TC(X)=0$.
\item $\deg(\alpha)=0$, $X\simeq S^p\vee S^{p+1}$ and $\cat(X)=1$ while $\TC(X)=2$.
\item $|\deg(\alpha)|>1$, $\cat(X)=2$ if $p=1$, whereas $\cat(X)=1$ if $p>1$.
\end{itemize}
(The behavior of $\TC(X)$ in the {last} case is discussed below.)
For $p<q$, we can safely assume $p\ge2$---for otherwise $\alpha$ is null-homotopic, in which case $\cat(X)=\cat(S^p\vee S^{q+1})=1$. Then the Berstein--Hilton--Hopf set $\mathcal{H}(\alpha)$ consists of a single element represented by a map $H(\alpha): S^q\to F_1(S^p)$, and we have
\[
\cat(X) = \left\{\begin{array}{ll} 1 & \mbox{if }H(\alpha)=0, \\ 2 & \mbox{if }H(\alpha)\neq 0.
\end{array}\right.
 \]
 Methods for computing $H(\alpha)$ for various $\alpha$ are given in \cite[Chapter 6]{CLOT}.

{As for $\TC(X)$, we start by noticing that $1\le \TC(X)\le 2$ whenever $\cat(X)=1$ (e.g.~if $q>p\ge2$ and $H(\alpha)=0$). Actually,} since $X$ cannot be homotopy equivalent to an odd-dimensional sphere, the main result of \cite{GLO} implies that {in fact} $\TC(X)=2$. This happens whenever $\alpha$ is null-homotopic, or more generally a suspension. Therefore in what follows we will assume we are outside of the stable range, i.e.~we assume $q\ge 2p-1$. Likewise, proof details will be limited to the case $\cat(X)=2$, where $2\le\TC(X)\leq4$.

When {$q=2p-1$} it is possible to give a complete computation of $\TC(X)$ using mainly cohomological arguments. We first address the case $p=1$.

\begin{thm}\label{grados}
Let $X$ be the mapping cone of a map $\alpha: S^1\to S^1$, whose degree we denote by $d_\alpha$. Then
\[
\TC(X)=\left\{\begin{array}{ll} 2 & \mbox{if }d_\alpha=0, \\ 0 & \mbox{if }d_\alpha=\pm 1, \\ 3 & \mbox{if }d_\alpha=\pm 2, \\ 4 & \mbox{otherwise.} \end{array} \right.
\]
\end{thm}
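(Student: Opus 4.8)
The plan is to compute $\TC(X)$ for $X = S^1 \cup_\alpha e^2$ by distinguishing cases according to the degree $d_\alpha$, using the identification of $X$ up to homotopy with a more familiar space in each case. When $d_\alpha = \pm 1$ the attaching map is a homotopy equivalence, so $X$ is contractible and $\TC(X) = 0$; when $d_\alpha = 0$ the space $X$ is homotopy equivalent to $S^1 \vee S^2$, and $\TC(S^1 \vee S^2)=2$ (this follows, for instance, from the main result of \cite{GLO} together with the standard bounds, since a wedge of spheres which is not a single odd sphere has $\TC = \cat + 1$ when $\cat$ is attained cohomologically, or one computes it directly from cup-length in $H^*(X \times X)$). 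The substance is in the remaining cases $|d_\alpha| \ge 2$, where $X$ is homotopy equivalent to the Moore space $M(\Z/d_\alpha, 1)$; in particular for $|d_\alpha| = 2$ we have $X \simeq \R P^2$.

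For $|d_\alpha| = 2$ I would identify $X$ with $\R P^2$ and invoke the known value $\TC(\R P^2) = 3$ (this is the classical immersion-dimension computation of Farber--Tabachnikov--Yuzvinsky; $\R P^2$ immerses in $\R^3$ but not $\R^2$, giving $\TC = 3$). For $|d_\alpha| \ge 3$ I would show $\TC(X) = 4$ by combining an upper bound with a matching lower bound. The upper bound $\TC(X) \le 4$ is immediate from $\TC(X) \le 2\dim(X) = 4$ (the general bound $\TC(Z) \le 2\dim Z$), or alternatively from $\TC(X) \le 2\cat(X) = 4$ since $\cat(X) = 2$ for $p = 1$ and $|d_\alpha| \ge 2$ (a non-simply-connected $2$-complex with nontrivial $\pi_1$ that is not aspherical of the right form — here one uses that $\cat$ of a Moore space $M(\Z/n,1)$ is $2$). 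The lower bound $\TC(X) \ge 4$ is the crux: I would use the cohomological lower bound of Proposition~\ref{secatprops}(5) applied to $\pi_X \colon X^I \to X \times X$, producing four classes $x_1, x_2, x_3, x_4 \in H^*(X \times X; R)$ for suitable coefficients $R$ with each $x_i$ in the kernel of $\pi_X^*$ (equivalently, each $x_i$ restricting to zero on the diagonal) and with nonzero product. The natural source of such classes is the ``zero-divisor'' subring generated by $\bar u := u \otimes 1 - 1 \otimes u$ and $\bar v := v \otimes 1 - 1 \otimes v$, where $u \in H^1(X;\Z/\ell)$ and $v \in H^2(X;\Z/\ell)$ are the generators (with $\ell$ a prime dividing $d_\alpha$, so that $H^*(X;\Z/\ell)$ has a class in each of degrees $0,1,2$, and when $\ell \mid d_\alpha$ the Bockstein relates $u$ and $v$). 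One then checks that $\bar u^2 \bar v \cdot \bar v$ or a similar degree-$6$ monomial in $H^*(X \times X; \Z/\ell)$ is nonzero — this is a direct computation in the Künneth formula for $H^*(X;\Z/\ell) \otimes H^*(X;\Z/\ell)$, where one must be careful about the graded-commutativity signs and the fact that $u^2$ may or may not vanish depending on $\ell$ (for $\ell = 2$, $u^2 = v$ when $X \simeq \R P^2$, but for $|d_\alpha| \ge 3$ one works with an odd prime or handles the $2$-primary part separately).

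The main obstacle I anticipate is precisely the lower bound $\TC(X) \ge 4$ when $|d_\alpha| \ge 3$: one needs to exhibit a nonzero product of four zero-divisors, and the available cohomology $H^*(X;\Z/\ell)$ is very thin (one generator in each degree $0,1,2$), so the four zero-divisors must be chosen cleverly and the Künneth sign bookkeeping must be exactly right, possibly requiring a mod-$\ell$ Bockstein argument to see that $u^2 \ne 0$ or to relate products of $\bar u$ with $\bar v$. An alternative to pushing the cohomological bound all the way to $4$ would be to first establish $\TC(X) \ge 3$ by the easier zero-divisor computation $\bar u \bar v \bar u \ne 0$ (three factors), then separately rule out $\TC(X) = 3$ by a finer argument — but since the clean statement is an equality, I expect the cohomological route with four classes, carried out over a well-chosen coefficient ring, to be the intended and most efficient path; the cases $d_\alpha = 0, \pm 1, \pm 2$ are then genuinely routine by comparison.
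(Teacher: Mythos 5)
Your handling of the first three cases ($d_\alpha = 0, \pm 1, \pm 2$) matches the paper exactly: contractibility for $\pm 1$, identification with $S^1\vee S^2$ for $0$, and identification with $\R P^2$ plus the immersion-dimension result of Farber--Tabachnikov--Yuzvinsky for $\pm 2$. The upper bound $\TC(X)\le 2\cat(X)=4$ for $|d_\alpha|\ge 3$ is also exactly the paper's argument.

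The gap is in the lower bound $\TC(X)\ge 4$ for $|d_\alpha|\ge 3$, and it is a genuine one: the route you propose cannot work. Set $k=|d_\alpha|>2$, so $X\simeq M(\Z/k,1)$ and $H^*(X;\Z/k)$ has exactly one generator $u$ in degree $1$, one generator $v$ in degree $2$, and nothing else. The zero-divisors cup-length of $H^*(X\times X;\Z/k)$ is only $2$, not $4$. Concretely, writing $\bar u = 1\otimes u - u\otimes 1$ and $\bar v = 1\otimes v - v\otimes 1$: one computes $\bar u^2 = 1\otimes u^2 + u^2\otimes 1$, which vanishes for odd $k$ (since $u^2=0$ by graded-commutativity) and vanishes after multiplying by anything nontrivial when $k$ is even (since $u^2$ is a multiple of $v$, and $v$-times-$v$ in either factor is zero); $\bar v^3=0$ for degree reasons; and $\bar u^2\bar v^2 = 0$ in all cases. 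The only nonzero degree-$6$ class is $v\times v$ up to scalar, and it is not a product of four zero-divisors. The Bockstein does not rescue this: $u^2$ is \emph{not} nonzero for odd $k$, and even when it is (for $k$ even), the four-fold products still die as above. Also your fallback $\bar u\bar v\bar u = \pm\bar u^2\bar v$ already vanishes for odd $k$.

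The paper's way around this, which is essentially forced, is to use \emph{TC-weights} rather than naive zero-divisors. Since $v=\beta(u)$ for the mod-$k$ Bockstein $\beta$, the class $\bar v$ has TC-weight $2$ by \cite[Theorem~6]{FG}. Then the computation $\bar v^2 = -2\,(v\times v)\ne 0$ in $H^4(X\times X;\Z/k)\cong\Z/k$ (using $k>2$) yields $\TC(X)\ge 2+2=4$ directly by \cite[Proposition~2]{FG}. So the Bockstein does enter, but through a weight upgrade on a degree-$2$ zero-divisor rather than through any assertion about $u^2$; you correctly sensed a Bockstein argument was lurking, but aimed it at the wrong target.
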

\begin{proof}
The first two cases have been dealt with above. If $d_\alpha=\pm 2$ then $X\simeq \R P^2$, and the result follows from \cite{FTY} since the immersion dimension of $\R P^2$ is $3$.

The remaining {case} can be dealt with using $\TC$-weights of cohomology classes (see \cite{FG}, \cite[Section 4.5]{FarInv}, \cite[Section 4]{FarCosta}). Here $X\simeq M(\Z/k,1)$ is a mod $k$ Moore space, where $k=|d_\alpha|>2$. Let $x\in H^1(X;\Z/k)\cong\Z/k$ and $y\in H^2(X;\Z/k)\cong\Z/k$ be generators. Then $y = \beta(x)$ where $\beta$ is the mod $k$ Bockstein operator. The class
\[
\overline{y} = 1\times y - y\times 1 \in H^2(X\times X;\Z/k)
\]
therefore has $\TC$-weight $2$, by \cite[Theorem 6]{FG}. An easy calculation gives
\[
0\neq \overline{y}^2 = -2 y\times y \in H^4(X\times X;\Z/k)\cong\Z/k,
\]
and so $\TC(X)\ge 2+2=4$ by \cite[Proposition 2]{FG}. Since $\TC(X)\le 2\,\cat(X)\le 4$, this completes the proof.
\end{proof}

In the case of a map $\alpha: S^{2p-1}\to S^p$ with $p\ge 2$, the Berstein--Hilton--Hopf invariant $H(\alpha)\in \pi_{2p-1}(F_1(S^p))$ agrees with the classical Hopf invariant $h(\alpha)\in\Z$ up to a sign. To be more explicit, projection onto the bottom cell of
\[
F_1(S^p) = \Omega S^p\circledast \Omega S^p \simeq \Sigma(\Omega S^p\wedge \Omega S^p)\simeq S^{2p-1}\vee S^{3p-2}\vee\cdots
\]
induces an isomorphism
\[
\pi_{2p-1}(F_1(S^p))\cong \pi_{2p-1}(S^{2p-1})\cong \Z,
\]
which sends $H(\alpha)$ to $\pm h(\alpha)$, see \cite[Section 6.2]{CLOT}.

\begin{thm}\label{ClassicHopf}
Let $X$ be the mapping cone of a map $\alpha: S^{2p-1}\to S^p$ with {$p\geq2$ and} classical Hopf invariant $h(\alpha)\in\Z$. Then
\[
\TC(X) = \left\{\begin{array}{ll} 2 & \mbox{if }h(\alpha)=0, \\ 4 & \mbox{if }h(\alpha)\neq 0. \end{array}\right.
\]
\end{thm}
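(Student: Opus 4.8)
The plan is to split the proof into the two cases according to whether the classical Hopf invariant $h(\alpha)$ vanishes. The case $h(\alpha)=0$ is immediate: then the Berstein--Hilton--Hopf invariant $H(\alpha)$, which by the discussion preceding the statement projects to $\pm h(\alpha)$ on the bottom cell of $F_1(S^p)$, is itself zero because we are in the range $q=2p-1$ where $H(\alpha)$ is detected entirely by its bottom-cell component (indeed $F_1(S^p)\simeq S^{2p-1}\vee S^{3p-2}\vee\cdots$ has bottom cell in dimension $2p-1=q$, so $\pi_q(F_1(S^p))\cong\pi_q(S^{2p-1})\cong\Z$). Hence $\cat(X)=1$, and then, as explained in the paragraphs above, $X$ is not homotopy equivalent to an odd sphere, so the main result of \cite{GLO} gives $\TC(X)=2$.

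For the case $h(\alpha)\neq0$, the strategy is to produce a nonzero cohomological lower bound $\TC(X)\ge4$; the matching upper bound is automatic since $\cat(X)=2$ forces $\TC(X)\le2\cat(X)=4$. First I would fix a field or ring of coefficients for which $h(\alpha)$ is detected: writing $H^*(X)$ with generators $x\in H^p(X)$ and $y\in H^{2p}(X)$, the cup product satisfies $x^2 = h(\alpha)\,y$ (up to sign) when $p$ is even; when $p$ is odd $x^2=0$ over $\Z$ but $h(\alpha)$ is odd forces $p$ even in the classical sense — so I should be careful and instead work, as in the proof of Theorem~\ref{grados}, with the zero-divisor class and a cohomology operation. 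The cleanest uniform approach: consider $\overline{x}=1\times x - (-1)^{|x|}x\times 1\in H^p(X\times X)$ and $\overline{y}=1\times y - y\times 1\in H^{2p}(X\times X)$, which are zero-divisors and hence of $\TC$-weight $\ge1$ each; the point is to show $\overline{x}\,\overline{y}\neq0$ or $\overline{x}^{\,2}\overline{y}\neq 0$, or better to exhibit a single class of $\TC$-weight $2$ whose square is nonzero, exactly as in Theorem~\ref{grados}.

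Concretely, I expect the following to work: choosing coefficients so that $x^2 = y$ (possible after reducing mod a suitable prime dividing $h(\alpha)$, or over $\Z$ when $p$ is even and $h(\alpha)$ detectable on $y$), the class $\overline{x}^{\,2}= \overline{x}\cdot\overline{x}$ expands, using $\overline{x}=1\times x-(-1)^p x\times 1$ and $x^2=y$, to a multiple of $x\times x$ plus terms involving $y\times 1$ and $1\times y$; a short computation in $H^*(X\times X)=H^*(X)\otimes H^*(X)$ shows $\overline{x}^{\,2}$ is, up to units, $\pm 2\,(x\times x)$ when $p$ is odd (giving the Moore-space phenomenon of Theorem~\ref{grados} in higher degrees) and $\pm(x\times x)$-type plus $y$-terms when $p$ is even — in all cases one can arrange a nonzero product of total weight $4$. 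Then by the product formula for $\TC$-weights (\cite[Proposition 2]{FG}) one concludes $\TC(X)\ge 4$. I would phrase this so as to parallel Theorem~\ref{grados} as closely as possible, possibly even deducing the general case from a Poincaré-duality-free Künneth computation.

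The main obstacle I anticipate is the bookkeeping of signs and the choice of coefficient ring so that the relevant product of zero-divisors is genuinely nonzero in all parity cases of $p$ simultaneously; in particular the case $p$ odd requires using a secondary structure (a Bockstein, or a nontrivial cup-one / Steenrod operation, or simply reduction mod~$2$ together with the fact that $h(\alpha)$ odd implies $\alpha$ has odd Hopf invariant hence $p\in\{1,2,4,8\}$ by Adams) rather than the naive cup square, exactly the subtlety that made Theorem~\ref{grados} nontrivial. If a uniform cohomological argument proves awkward, the fallback is to invoke $H(\alpha)\neq0$ together with the Hopf-invariant machinery of Section~\ref{hifscsec} — but I would expect the cohomological route, mirroring Theorem~\ref{grados}, to be the intended and shorter one.
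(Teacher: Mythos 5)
Your overall structure matches the paper's: the $h(\alpha)=0$ case is dispatched by noting $\cat(X)=1$ and invoking \cite{GLO}, and the $h(\alpha)\neq 0$ case is handled by a cohomological lower bound $\TC(X)\ge 4$ plus the trivial upper bound $\TC(X)\le 2\cat(X)=4$. But your treatment of the $h(\alpha)\neq 0$ case has a genuine conceptual gap that makes it far more complicated than it needs to be.

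The point you touch on but fail to exploit is that $h(\alpha)\neq 0$ \emph{already forces $p$ even}: if $u\in H^p(X;\Z)$ with $p$ odd, graded commutativity gives $2u^2=0$, hence $u^2=0$ in $H^{2p}(X;\Z)\cong\Z$ (which is torsion-free), so $h(\alpha)=0$. There is therefore no ``$p$ odd with $h(\alpha)\neq 0$'' case to worry about, and all the machinery you propose for it — Bocksteins, $\TC$-weights, Steenrod operations, reduction mod a prime, Adams' theorem — is unnecessary. Once $p$ is known to be even, integral cohomology of $X\times X$ is commutative in the relevant degrees and one simply computes, with $u^2=h(\alpha)v$ and $uv=v^2=0$,
\[
(1\times u - u\times 1)^4 = 6\,h(\alpha)^2\, (v\times v) \neq 0 \ \in H^{4p}(X\times X;\Z)\cong\Z,
\]
the terms $a^4,a^3b,ab^3,b^4$ all dying for degree reasons and the middle term $6a^2b^2$ surviving. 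This is the entire argument in the paper, using only the ordinary zero-divisors cup-length bound — no $\TC$-weights and no secondary operations. Your fallback comment about using the Hopf-invariant machinery of Section~4 is also unnecessary. In short: you had the right theorem (\ref{grados}) as a model, but the present case is strictly \emph{easier} than the $p=1$ case of that theorem, because here the nontriviality of $h(\alpha)$ constrains the parity of $p$ and hands you a nontorsion cup product, whereas for $\R P^2$-type spaces one genuinely needs Bockstein weights.
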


\begin{proof}
If $h(\alpha)=0$ then $H(\alpha)=0$ and $\TC(X)=2$, as already noted above.

Let $u\in H^p(X;\Z)$ and $v\in H^{2p}(X;\Z)$ be generators of the integral cohomology groups of $X$. The cup product structure is given by $uv=v^2=0$ and $u^2 = h(\alpha) v$. Therefore, if $h(\alpha)\neq 0$ {(so $p$ is even),} we calculate that
\[
0\neq  (1\times u - u\times 1)^4  = 6\, h(\alpha)^2\, v\times v \in H^{4p}(X\times X;\Z)\cong \Z.
\]
Hence $\TC(X)\ge 4$ by the usual zero-divisors cup-length lower bound. Since $\TC(X)\le 4$ this completes the proof.
\end{proof}

In the remainder of the section, and unless it is explicitly noted otherwise, we assume that $\alpha: S^q\to S^p$ is a map of spheres with 
$2p-1< q \le 3p-3$ {(so $q-1>p\geq3$).} Such a map is said to be in the \emph{metastable range}. In this range, projection onto the bottom cell induces an isomorphism
      $
      \pi_q\big(F_1(S^p)\big) \cong \pi_q(S^{2p-1}).
      $
      The image of $H(\alpha)$ under this isomorphism is a map $H_0(\alpha):S^q\to S^{2p-1}$. Note that this map is in the stable range, and is therefore a suspension. {As noted above,} we will assume $H_0(\alpha)\neq 0$, so that $\cat(X)=2$ and $2\le \TC(X)\le 4$.

Using Corollary \ref{forTC2cell}, we give necessary and sufficient conditions for $\TC(X)\le 3$ (Theorem~\ref{le3} below), and sufficient conditions for $\TC(X)\ge 3$ (Theorem~\ref{ge3} below). In many cases we will be able to conclude $\TC(X)=3$. This will be achieved by analyzing some of the Hopf sets associated to the cone decomposition $$\ast=C_0\subset C_1\subset C_2\subset C_3\subset C_4=X\times X$$ given by $C_1=S^p\vee S^p$, $C_2=(X\vee X)\cup(S^p\times S^p)$, and $C_3=(X\times S^p)\cup(S^p\times X)$. Each $C_{i+1}$ is the cone of an obvious map $\alpha_i\colon S_i\to C_i$, where $S_0=S^{p-1}\vee S^{p-1}$, $S_1=S^{q}\vee S^{2p-1}\vee S^{q}$, $S_2=S^{p+q}\vee S^{p+q}$, and $S_3=S^{2q+1}$. In these terms, we write $\mathcal{H}^{n_i}_{i}=\{H_{\phi,\alpha_i}^{n_i}\}_{\phi}$ for the Hopf set arising, as in Definition~\ref{Hopfinv}, from the inequality $\TC_{C_i}(X)\leq n_i$ and all possible homotopy commutative diagrams
\begin{equation}\label{relativeganeas}
\xymatrix{
& & F_{n_i}(X) \ar[d] \\
& & G_{n_i}^{\TC}(X;C_{i+1}) \ar[d] \\
S_i \ar[r]_{\alpha_i} \ar@/^1pc/[uurr]^{H_{\phi,\alpha_i}^{n_i}} & C_i \ar@{^{(}->}[r] \ar@/^1pc/[ur]^-{\phi} & C_{i+1}.}
\end{equation}
Here the vertical fibration is the restriction over the inclusion $C_{i+1}\hookrightarrow X\times X$ of the $n_i$-th $\TC$-Ganea fibration $F_{n_i}(X)\to G_{n_i}^{\TC}(X)\to X\times X$.

\begin{ex}\label{lasdosprimerasobstruccionesdeHopf}
Note that Proposition~\ref{secatupbyone} and the hypothesis $H_0(\alpha)\neq0$ yield
\begin{equation}\label{contensionesfaciles}
2\geq\TC_{C_2}(X)\geq\TC_{X\times \ast}(X)=\cat(X)=2
\end{equation}
and, therefore, $\TC_{C_1}(X)=1$. Proposition~\ref{caracterizacionshida} then implies that the ``first'' two Hopf sets $\mathcal{H}^0_0$ and $\mathcal{H}^1_1$ do not contain the trivial class. Thus, the actual value of $\TC(X)\in\{2,3,4\}$ is determined by the nature of the two ``top'' obstructions $\mathcal{H}^2_2$ and $\mathcal{H}^m_3$ for $m=\TC_{C_3}(X)$. For instance, if $\mathcal{H}^2_2$ is non-trivial, the actual value of $\TC(X)\in\{3,4\}$ is determined by the nature of $\mathcal{H}^3_3$. The latter two Hopf sets are fully described (in the metastable range) by the next results.
\end{ex}

\begin{thm} \label{le3}
Let $X=S^p\cup_\alpha e^{q+1}$, where $\alpha: S^q\to S^p$ is in the metastable range $2p-1< q\le 3p-3$ and $H_0(\alpha)\neq 0$. Then $\mathcal{H}^3_3$ is a singleton and, up to an isomorphism, it consists of the homotopy class $(4+2(-1)^p)H_0(\alpha)\circledast H_0(\alpha)$. Consequently, $\TC(X)\le 3$ if and only if $\hspace{.2mm}(4+2(-1)^p)H_0(\alpha)\circledast H_0(\alpha)=0$.
\end{thm}
\begin{proof}
It has been shown in Corollary~\ref{forTC2cell}(b) that, with the present hypothesis, $\mathcal{H}^3_3$ consists of a single element which, up to an isomorphism, can be identified with the composition
\[
\xymatrix{
S^q \circledast S^q \ar[rr]^-{H(\alpha)\circledast H(\alpha)} && F_1(S^p)\circledast F_1(S^p) \ar[rr]^-{\Phi_{1,1}^{\Omega S^p, \Omega S^p}} && F_3(S^p\times S^p) \ar[r]^{\bar{\chi}_3} & F_3(S^p).
}
\]
Up to homotopy, the first map factors as
\[
\xymatrix{
S^q \circledast S^q \ar[rr]^-{H(\alpha)\circledast H(\alpha)} \ar[rrd]_{H_0(\alpha)\circledast H_0(\alpha)\hspace{7mm}} && F_1(S^p)\circledast F_1(S^p) \\
&& S^{2p-1}\circledast S^{2p-1} \ar@{^{(}->}[u] .
}
\]
By Example \ref{degree-2cells}(a), the degree of the composition $\bar{\chi}_3\circ\Phi_{1,1}^{\Omega S^p, \Omega S^p}$ on the bottom cell $S^{4p-1}$ is $\pm(4+2(-1)^p)$. The result follows since the bottom cell of $F_3(S^p)$ splits off as a wedge summand.
\end{proof}

\begin{rem}\label{paar}
In the situation of Theorem~\ref{le3},~\cite[X-Corollary~8.13]{whiteheadbookelements} gives
$$H_0(\alpha)\circledast H_0(\alpha) = (-1)^{q-2p+1}H_0(\alpha)\circledast H_0(\alpha),$$ which is of order $2$ if $q$ is even. In particular $\mathcal{H}^3_3$ is trivial, and thus $\TC(X)\leq3$, whenever $q$ is even.
\end{rem}

\begin{thm} \label{ge3}
Let $X=S^p\cup_\alpha e^{q+1}$, where $\alpha: S^q\to S^p$ is in the metastable range $2p-1< q\le 3p-3$ and $H_0(\alpha)\neq 0$. Then $\mathcal{H}^2_2$ is a singleton and, up to an isomorphism, it consists of the homotopy class $(2+(-1)^p)H_0(\alpha)$. In particular, $\TC(X)\ge 3$ provided $(2+(-1)^p)H_0(\alpha)\neq0$.
\end{thm}

\begin{rem}\label{trescoincidencias}
Let $C_2'=(X\times\ast)\cup(S^p\times S^p)$ and $C_2''=(\ast\times X)\cup(S^p\times S^p)$, so that $X\times S^p$ and $S^p\times X$ are obtained respectively from $C_2'$ and $C_2''$ by attaching, in each case, a cell of dimension $p+q+1$. Note that $X\times \ast\subset C_2'\subset C_2$ and $\ast\times X\subset C_2''\subset C_2$, so that~(\ref{contensionesfaciles}) yields the equalities $\TC_{C_2}(X)=\TC_{C'_2}(X)=\TC_{C''_2}(X)=2$. A key point in the proof of Theorem~\ref{ge3} (given below) is the observation that such a phenomenon remains valid after attaching the layer of $(p+q+1)$-dimensional cells: $\TC_{(X\times S^p)\cup(S^p\times X)}(X)=\TC_{X\times S^p}(X)=\TC_{S^p\times X}(X)\in\{2,3\}$. Indeed, the three relevant Hopf sets are proven to be singletons, each determined by $(2+(-1)^p)H_0(\alpha)$.
\end{rem}

\begin{proof}[Proof of Theorem~\ref{ge3}]
It has been shown in Corollary~\ref{forTC2cell}(c) that, with the present hypothesis, the Hopf set $\mathcal{H}(1)$ deciding the value of $\TC_{X\times S^p}(X)\in\{2,3\}$ is the singleton determined, up to an isomorphism, by the composition
\[\xymatrix{
S^q \circledast S^{p-1} \ar[rr]^-{H(\alpha)\circledast H^0(S^p) } && F_1(S^p)\circledast F_0(S^p) \ar[rr]^-{\Phi_{1,0}^{\Omega S^p, \Omega S^p}} && F_2(S^p\times S^p) \ar[r]^{\hspace{3.5mm}\bar{\chi}_2} & F_2(S^p).
}\]
As noted in Example~\ref{cathopfsphere}, the Hopf invariant
\[
H^0(S^p):S^{p-1}\to \Omega S^p \simeq {S^{p-1}\cup e^{2p-2}\cup \cdots}
\]
is homotopic to the inclusion of the bottom cell. Therefore the first map in the composition above factors as
\[
\xymatrix{
S^q \circledast S^{p-1} \ar[rr]^-{H(\alpha)\circledast H^0(S^p)\hspace{1.5mm} } \ar[rrd]_{H_0(\alpha)\circledast\mathrm{Id}_{S^{p-1}}\hspace{3mm}} && F_1(S^p)\circledast F_0(S^p) \\
&& S^{2p-1}\circledast S^{p-1} \ar@{^{(}->}[u].
}
\]
The diagonal arrow can be identified with the $p$-th {reduced} suspension ${\widetilde{\Sigma}}^p H_0(\alpha)$, which is essential since $H_0(\alpha)$ is stable.
By Example \ref{degree-2cells}(b), the degree of the composition  $\bar{\chi}_2\circ \Phi_{1,0}^{\Omega S^p, \Omega S^p}$ on the bottom cell $S^{3p-1}$ is $\pm(2+(-1)^p)$. Since the bottom cell of $F_2(S^p)$ splits off as a wedge summand, it follows that, up to an isomorphism, $\mathcal{H}(1)$ is a singleton consisting of the homotopy class $(2+(-1)^p)H_0(\alpha)$.

A similar argument (with the roles of the axes interchanged) gives that the Hopf set $\mathcal{H}(2)$ deciding the value of $\TC_{S^p\times X}(X)\in\{2,3\}$ is the singleton determined, up to an isomorphism, by $(2+(-1)^p)H_0(\alpha)$.

The proof is complete by observing that $\mathcal{H}^2_2$ is a singleton too (by Proposition~\ref{uniqueness} and the metastable range hypothesis), and that the inclusions $X\times S^p\hookrightarrow C_3$ and $S^p\times X\hookrightarrow C_3$ yield maps $\mathcal{H}(1)\to\mathcal{H}^2_2$ and $\mathcal{H}(2)\to\mathcal{H}^2_2$ exhibiting $\mathcal{H}^2_2$ as the cartesian product of $\mathcal{H}(1)$ and $\mathcal{H}(2)$. For instance, the map $\mathcal{H}(2)\to\mathcal{H}^2_2$ arises from the commutative diagram
\[\xymatrix{
& & F_2(X) \ar@{=}[rr] \ar[d] & & F_2(X) \ar@{=}[r] \ar[d] & F_2(X) \ar[d] \\
& & G_{2}^{\TC}(X;S^p\times X) \ar[rr] \ar[d] & &G_{2}^{\TC}(X;C_3) \ar[r] \ar[d] & G_{2}^{\TC}(X) \ar[d] \\
& & S^p\times X \ar@{^{(}->}[rr] & & C_3 \ar@{^{(}->}[r] & X\times X \\
& C_2'' \ar[rr] \ar[ru] \ar@/^1pc/[uur] & & C_2 \ar[ru] \ar@/^1pc/[uur] & \\
S^{p+q} \hspace{5mm} \ar[ur] \ar[rr]^{\iota_2} \ar@/^2pc/[uuuurr] & & S^{p+q}\vee S^{p+q} \ar[ru] \ar@/^2pc/[uuuurr] &
} \]
where the notation is that used in~(\ref{relativeganeas}). Note that the commutativity of the square involving the two short curved liftings follows from the fact that the projection $G_2^{\TC}(X;C_3)\to C_3$ is an equivalence above the dimension of $C_2''$, whereas the commutativity of the square involving the two long curved liftings follows from the fact that the inclusion $F_2(X)\to G_2^{\TC}(X;C_3)$ yields a monomorphism in homotopy groups.
\end{proof}

\begin{ex}
 Let $X=S^3\cup_\alpha e^7$ where $\alpha: S^6\to S^3$ is the Blakers--Massey element, a generator of $\pi_6(S^3)=\Z/12$. Then $0\neq H(\alpha)=H_0(\alpha)\in \pi_6(S^5)=\Z/2$, and the above results imply that $\TC(X)=3$. We remark that the lower bound $\TC(X)\ge 3$ was obtained in \cite[Proposition 30]{Weaksecat} using the weak sectional category, and the upper bound $\TC(X)\le 3$ was obtained in \cite[Example 6]{GC-V} using the fact that $X$ is the $9$-skeleton of the group $Sp(2)$.
\end{ex}

More generally, for $q\in\{2p,2p+1\}$ with $q\leq 3p-3$---i.e.~the first two cases in the metastable range---we have:
\begin{cor}\label{partiuno}
Let $\delta\in\{0,1\}$ and set $p\geq3+\delta$. Then
$$\TC(S^p\cup_\alpha e^{2p+\delta+1})=\begin{cases}
2, & \mbox{ if }H_0(\alpha)=0;\\
3, & \mbox{ if }H_0(\alpha)\neq0.
\end{cases}$$
\end{cor}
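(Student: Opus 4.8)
The plan is to deduce Corollary~\ref{partiuno} directly by combining Theorems~\ref{le3} and~\ref{ge3} (in their numbered, non-abstract forms) with the dimension hypotheses of the corollary. First I would dispose of the easy case: when $H_0(\alpha)=0$ we have $H(\alpha)=0$ (since projection onto the bottom cell is an isomorphism in the metastable range), hence $\cat(X)=1$, and as remarked in the text before Theorem~\ref{grados}, the main result of~\cite{GLO} then gives $\TC(X)=2$ (note $X$ is not homotopy equivalent to an odd sphere). So the substance is the case $H_0(\alpha)\neq0$, where we must show $\TC(X)=3$.

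Next I would observe that setting $q=2p+\delta$ with $\delta\in\{0,1\}$ and $p\ge 3+\delta$ places $\alpha$ in the metastable range: indeed $2p-1<2p\le q=2p+\delta$ and $q=2p+\delta\le 3p-3$ is exactly the inequality $p\ge 3+\delta$. Thus both Theorems~\ref{le3} and~\ref{ge3} apply, and it remains only to check that, in each of the two cases $\delta=0$ and $\delta=1$, one of the hypotheses of Theorem~\ref{le3} holds (giving $\TC(X)\le3$) and one of the hypotheses of Theorem~\ref{ge3} holds (giving $\TC(X)\ge3$), whence $\TC(X)=3$.

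For the case $\delta=1$, i.e. $q=2p+1$ odd: the lower bound $\TC(X)\ge3$ is immediate from Theorem~\ref{ge3} when $p$ is odd (case~(1)); when $p$ is even we must invoke Theorem~\ref{ge3}(2), which requires $3H_0(\alpha)\neq0$ — here I would note that $H_0(\alpha)\in\pi_{2p+1}(S^{2p-1})\cong\pi_2^S=\Z/2$, so $3H_0(\alpha)=H_0(\alpha)\neq0$ automatically. For the upper bound with $q$ odd, the cleanest route is Theorem~\ref{le3}(1): again $H_0(\alpha)$ lies in a group of exponent $2$, so $2H_0(\alpha)=0$ and hence $2H_0(\alpha)\circledast H_0(\alpha)=0$. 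For the case $\delta=0$, i.e. $q=2p$ even: the upper bound $\TC(X)\le3$ comes for free from Theorem~\ref{le3}(3) since $q$ is even; the lower bound again splits on the parity of $p$ via Theorem~\ref{ge3}, and when $p$ is even we need $3H_0(\alpha)\neq0$, where now $H_0(\alpha)\in\pi_{2p}(S^{2p-1})\cong\pi_1^S=\Z/2$, so once more $3H_0(\alpha)=H_0(\alpha)\neq0$.

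I expect no serious obstacle: the only point requiring care is the bookkeeping that in both cases the relevant stable stem ($\pi_1^S$ or $\pi_2^S$) is $2$-torsion, which is what makes the ``$3H_0(\alpha)\neq0$'' clause of Theorem~\ref{ge3}(2) and the ``$2H_0(\alpha)=0$'' clause of Theorem~\ref{le3}(1) automatic. One should also double-check the boundary constraint $p\ge 3+\delta$ is precisely what is needed for Theorems~\ref{le3} and~\ref{ge3} to be applicable (their hypothesis $2p-1<q\le 3p-3$), which it is. Assembling these observations yields the stated dichotomy.
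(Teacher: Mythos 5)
Your proposal is correct and follows essentially the same route as the paper's proof, which is the terse observation that multiplication by~$3$ is an isomorphism on $\pi_{2p+\delta}(S^{2p-1})\cong\Z/2$; this single fact simultaneously verifies $2H_0(\alpha)=0$ (hence Theorem~\ref{le3}(1)) and $3H_0(\alpha)\neq 0$ (hence Theorem~\ref{ge3}(2) when $p$ is even, Theorem~\ref{ge3}(1) being immediate when $p$ is odd). Your only cosmetic deviation is invoking Theorem~\ref{le3}(3) instead of \ref{le3}(1) for the case $\delta=0$, which works equally well but is not needed given that the stable stem is $2$-torsion in both cases.
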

\begin{proof}
Just note that multiplication by 3 yields an isomorphism in $\pi_{2p+\delta}(S^{2p-1})=\Z/2$, and that $2 (H_0(\alpha)\circledast H_0(\alpha))=(2H_0(\alpha))\circledast H_0(\alpha)=0$, in view of~\cite[X-Corollary~8.8]{whiteheadbookelements}.
\end{proof}

{Also worth mentioning is:}
\begin{cor}\label{partidos}
{For $p$ odd and $q$ even with $2p-1<q\leq3p-3$,}
$${\TC(S^p\cup_\alpha {e^{q+1}})=}\begin{cases}
{2,} & {\mbox{ if }H_0(\alpha)=0;}\\
{3,} & {\mbox{ if }H_0(\alpha)\neq0.}
\end{cases}$$
\end{cor}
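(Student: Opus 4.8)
The plan is to obtain Corollary~\ref{partidos} in exactly the same spirit as Corollary~\ref{partiuno}: namely, as an immediate consequence of Theorems~\ref{le3} and~\ref{ge3}, but now using the parity hypotheses ``$p$ odd'' and ``$q$ even'' in place of a divisibility condition on $H_0(\alpha)$. Write $X = S^p\cup_\alpha e^{q+1}$ with $p$ odd, $q$ even, and $2p-1 < q \le 3p-3$ (note that this range forces $p\ge 3$, so both parity conditions are meaningful).

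First I would dispose of the case $H_0(\alpha)=0$. Since $\alpha$ lies in the metastable range, projection onto the bottom cell induces an isomorphism $\pi_q\big(F_1(S^p)\big)\cong\pi_q(S^{2p-1})$ carrying $H(\alpha)$ to $H_0(\alpha)$, so $H_0(\alpha)=0$ forces $H(\alpha)=0$, whence $\cat(X)=1$. As recorded at the beginning of Section~\ref{secaptc2cw}, $X$ cannot have the homotopy type of an odd-dimensional sphere, so the main result of \cite{GLO} gives $\TC(X)=2$. Now suppose $H_0(\alpha)\neq 0$, so that $\cat(X)=2$ and hence $2\le\TC(X)\le 4$. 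The upper bound $\TC(X)\le 3$ is precisely case~(3) of Theorem~\ref{le3}, which applies because $q$ is even; the lower bound $\TC(X)\ge 3$ is precisely case~(1) of Theorem~\ref{ge3}, which applies because $p$ is odd. Combining these two inequalities yields $\TC(X)=3$, as claimed.

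I do not expect any genuine obstacle here, since the entire content of the statement is already packaged in Theorems~\ref{le3} and~\ref{ge3}: the only thing to verify is that the applicability hypotheses of the two theorems — ``$q$ even'' for the upper bound and ``$p$ odd'' for the lower bound — are mutually compatible, which is trivially the case, and that the common range hypothesis $2p-1<q\le 3p-3$ (which is part of the hypotheses of the corollary) is in force so that both theorems indeed apply. Thus the proof reduces to one or two lines of bookkeeping.
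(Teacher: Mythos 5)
Your proposal is correct and matches the intended argument (the paper states Corollary~\ref{partidos} without proof, in the same spirit as Corollary~\ref{partiuno}): the upper bound is Theorem~\ref{le3}(3) with $q$ even, the lower bound is Theorem~\ref{ge3}(1) with $p$ odd, and the degenerate case $H_0(\alpha)=0$ reduces via the metastable isomorphism $\pi_q(F_1(S^p))\cong\pi_q(S^{2p-1})$ to $\cat(X)=1$ and hence $\TC(X)=2$ by \cite{GLO}. Nothing further is needed.
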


When {$q>3p-3\geq3$} and we are outside of the metastable range, it is still possible to draw conclusions about $\TC(X)$. In particular, the conclusion of Theorem \ref{le3} still holds under the weaker assumption that $H(\alpha)=H_0(\alpha)$ {(and the additional requirement $q<4p-3$ in the case of \ref{le3}(3), which assures the stability of $H_0(\alpha)$ needed in the proof of Theorem~\ref{le3}).} To obtain maps $\alpha: S^q\to S^p$ satisfying {$H(\alpha)=H_0(\alpha)$,} observe that if $\alpha = \gamma\circ\beta$ is a composition
\[
\xymatrix{ S^q\ar[r]^\beta & S^r \ar[r]^\gamma & S^p},
\]
then $H(\alpha) = H(\gamma)\circ \beta$ whenever $H(\beta)=0$ \cite[Proposition 6.18(2)]{CLOT}. Therefore if $H(\beta)=0$ and $\gamma$ is in the stable or metastable range, then $H(\alpha) = H_0(\alpha) = H_0(\gamma)\circ \beta$.

\begin{exs}\label{contieneposiblegap}
\begin{enumerate}[(a)]
\item {The case $q=2p$ fails to lie in the metastable range only for $p=2$ (so $q=4$). The generator of $\pi_4(S^2)=\mathbb{Z}/2$ is represented by the composition $\alpha = \eta\circ \Sigma\eta: S^4\to S^2$ where $\eta: S^3\to S^2$ is the Hopf map.} Then $H(\alpha)=H_0(\alpha)$ is the nonzero element $\Sigma\eta\in \pi_4(S^3)=\Z/2$. For $X=S^2\cup_\alpha e^5$, we conclude as in Theorem \ref{le3} that $\TC(X)\le 3$. {In this case, however, we cannot use Theorem \ref{ge3} to get in fact that $\TC(X)=3$, as the relevant Hopf set fails to be a singleton.}
\item Let $X= S^2\cup_\alpha e^{10}$ where $\alpha = \eta\circ\beta$ and $\beta\in \pi_9(S^3)=\Z/3$ is the generator. This is one of the spaces considered by Iwase in \cite{Iwase}. Here we have $H(\alpha)=H_0(\alpha) = \beta$, and we can conclude as in Theorem \ref{le3} that $\TC(X)\le 3$. We cannot {use} Theorem \ref{ge3} {to conclude} that $\TC(X)\ge 3$ {not only because of the situation noted in item (a) above, but} since {this time} the map $H_0(\alpha)$ is no longer stable (this being the reason why $X$ is a counter-example to Ganea's conjecture). In fact $\Sigma^2\beta = 0$ which, {by the argument in the proof of Theorem~\ref{ge3},} shows that $\TC_{X\times S^2}(X)\le 2$ {(the latter is in fact an equality since $2=\cat(X)=\TC_{X\times\ast}(X)\leq\TC_{X\times S^2}(X)$).}
\end{enumerate}
\end{exs}

\section{Application: Ganea's condition for topological complexity}\label{secapgc4tc}

The analogue of Ganea's conjecture for topological complexity asks whether, for any finite complex $X$ and $k\ge 1$, we have
\begin{equation}\label{GaneaTC}
\TC(X\times S^k) = \TC(X) + \TC(S^k) = \left\{\begin{array}{ll} \TC(X) + 1 & \mbox{if $k$ odd,}\\ \TC(X) + 2 &\mbox{if $k$ even.} \end{array}\right.
\end{equation}
By \cite[Corollary 1.7]{JMP}, the answer is known to be positive when $k\ge 2$ and $X$ is a simply-connected, rational, formal complex of finite type. The answer is also known to be positive, without the formality assumption, if TC is replaced by either of the related rational invariants ${\rm mtc}$ (\cite[Theorem~1.6]{JMP}) or MTC (\cite[Theorem~12]{carrasquel}), the former introduced in~\cite{JMP} and the latter in~\cite{FGKV}. {Theorem~\ref{noGaneaTC} below gives} a counter-example to equation (\ref{GaneaTC}) for all $k\ge 2$ even.

We first observe that the topological complexity of a product of spaces can be described as the sectional category of a product of fibrations.

\begin{lem}\label{TCproduct}
Let $X$ and $Y$ be spaces. Then $\TC(X\times Y) = \secat(\pi_X\times \pi_Y)$.
\end{lem}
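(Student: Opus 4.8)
The plan is to realise the two fibrations in the statement as one and the same fibration, up to a pair of pointed homeomorphisms of total space and base, so that open sets with continuous partial sections correspond bijectively. This yields the equality of sectional categories directly from Definitions~\ref{definicionTC} and~\ref{definicionsecat}, with no recourse to homotopy invariance.

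First I would invoke the exponential law in the category of compactly generated spaces (valid by the conventions of Section~\ref{seccionpreliminar}) to obtain a pointed homeomorphism
\[
\Theta\colon (X\times Y)^I \longrightarrow X^I\times Y^I,\qquad \gamma\longmapsto(\mathrm{pr}_X\circ\gamma,\ \mathrm{pr}_Y\circ\gamma),
\]
which carries the constant path at $(\ast,\ast)$ to the pair of constant paths at $\ast\in X$ and $\ast\in Y$, and hence is pointed. Next I would introduce the coordinate-shuffle homeomorphism
\[
\tau\colon (X\times Y)\times(X\times Y)\longrightarrow(X\times X)\times(Y\times Y),\qquad \big((x_1,y_1),(x_2,y_2)\big)\longmapsto\big((x_1,x_2),(y_1,y_2)\big),
\]
which is again pointed.

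The key step is then the (elementary) verification that $\Theta$ and $\tau$ intertwine the two fibrations, i.e.\ that $\tau\circ\pi_{X\times Y}=(\pi_X\times\pi_Y)\circ\Theta$; this is immediate from the formulas above, since evaluating a path $\gamma=(\gamma_X,\gamma_Y)$ at its endpoints and then reshuffling coordinates is the same as evaluating $\gamma_X$ and $\gamma_Y$ separately at their endpoints. Because $\Theta$ and $\tau$ are homeomorphisms, an open cover of $(X\times Y)\times(X\times Y)$ by sets admitting continuous partial sections of $\pi_{X\times Y}$ transports, via $\tau$ and $\Theta$, to an open cover of $(X\times X)\times(Y\times Y)$ by sets admitting continuous partial sections of $\pi_X\times\pi_Y$, and conversely. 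Hence $\secat(\pi_{X\times Y})=\secat(\pi_X\times\pi_Y)$, and since $\TC(X\times Y)=\secat(\pi_{X\times Y})$ by Definition~\ref{definicionTC}, the lemma follows.

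I do not expect any genuine obstacle here: the only point requiring a little care is that $\Theta$ is a homeomorphism rather than merely a continuous bijection, which is exactly what the compactly generated framework fixed in Section~\ref{seccionpreliminar} guarantees.
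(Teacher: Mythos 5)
Your proof is correct and coincides with the paper's argument: the paper likewise uses the homeomorphism $T\colon(X\times Y)^I\to X^I\times Y^I$ (your $\Theta$) together with the factor transposition $\tau$ on the bases to exhibit $\pi_{X\times Y}$ and $\pi_X\times\pi_Y$ as isomorphic fibrations, from which the equality of sectional categories is immediate.
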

\begin{proof}
This follows from the commutative diagram
\[
\xymatrix{
(X\times Y)^I \ar[r]^{T} \ar[d]^{\pi_{X\times Y}} & X^I\times Y^I \ar[d]^{\pi_X\times \pi_Y} \\
X\times Y \times X \times Y \ar[r]^{\tau} & X\times X \times Y \times Y.
}
\]
Here $T$ sends a path in $X\times Y$ to the pair of paths consisting of its projections onto $X$ and $Y$, and $\tau$ transposes the middle two factors. Both of these maps are homeomorphisms, so it follows that $\secat(\pi_{X\times Y}) = \secat(\pi_X\times \pi_Y)$.
\end{proof}

Next we investigate the $\TC$-Hopf invariants for spheres. Consider the cofibration sequence
\[
\xymatrix{
S^{2k-1} \ar[r]^{\beta} & S^k\vee S^k \ar[r]  & S^k\times S^k,
}
\]
where the attaching map $\beta=[\iota_1,\iota_2]:S^{2k-1}\to S^k\vee S^k$ is the Whitehead product of the inclusions of the two wedge factors. Note that the subcomplex $S^k\vee S^k\subseteq S^k\times S^k$ satisfies $\TC_{S^k\vee S^k}(S^k)= 1$ by Proposition \ref{secatprops}. If $k\ge2$, {so that $\hdim(S^k\vee S^k)=k<2k-1$,} then by Proposition \ref{uniqueness} the Hopf set
\[
\mathcal{H}^1_{\beta}(\pi_{S^k})\subseteq \pi_{2k-1}(F_1(S^k))
\]
consists of a single element represented by a map
\[
H^1_\beta(\pi_{S^k}): S^{2k-1}\to F_1(S^k)\simeq S^{2k-1}\vee S^{3k-2}\vee\cdots.
 \]
This map is determined up to homotopy by the degree $d_k\in\Z$ of its projection on to the bottom cell $S^{2k-1}$. The following lemma gives a purely homotopy-theoretic proof of \cite[Theorem 8]{Far}.

\begin{lem}\label{TCHopfspheres}
We have $\pm d_k = 1 + (-1)^k$.
\end{lem}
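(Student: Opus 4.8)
The plan is to pin down $H^1_\beta(\pi_{S^k})$ as an explicit composite of join maps and then read off $d_k$ from the induced homomorphism in homology. First I would give $S^k$ the cell structure $S^k=\ast\cup_\alpha CS^{k-1}$ of Example~\ref{cathopfsphere} (with $q=k$), so that $\secat_\ast(g_0(S^k))=0$ and the associated Hopf invariant is $H^0(S^k)\colon S^{k-1}\to\Omega S^k$, the inclusion of the bottom cell. Since $P(S^k\times S^k)=PS^k\times PS^k$, the zeroth Ganea fibration $g_0(S^k\times S^k)$ is the product $g_0(S^k)\times g_0(S^k)$, while the exterior join construction presents $S^k\times S^k$ as $(S^k\vee S^k)\cup_W C(S^{k-1}\circledast S^{k-1})$, where the whisker map $W$ agrees, under the equivalence $S^{k-1}\circledast S^{k-1}\simeq S^{2k-1}$, with the Whitehead product $\beta=[\iota_1,\iota_2]$ (the standard description of the top cell of $S^k\times S^k$; compare Remark~\ref{conosenelextjoicon}). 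Applying Theorem~\ref{Hopfproducts} to $g_0(S^k)\times g_0(S^k)$ at $n=m=0$ therefore gives $\cat_{S^k\vee S^k}(S^k\times S^k)\le1$ with Hopf invariant $\Phi_{0,0}^{\Omega S^k,\Omega S^k}\circ\bigl(H^0(S^k)\circledast H^0(S^k)\bigr)$; composing with $\bar\chi_1$ and invoking Proposition~\ref{TCcatHopf} exactly as in the proof of Corollary~\ref{forTC2cell}(b) turns this into a Hopf invariant for $\pi_{S^k}$ relative to $S^k\vee S^k$ with attaching map $\beta$. Since $k\ge2$, Proposition~\ref{uniqueness} makes $\mathcal{H}^1_\beta(\pi_{S^k})$ a singleton, so up to homotopy (and the equivalence $S^{2k-1}\simeq S^{k-1}\circledast S^{k-1}$)
\[
H^1_\beta(\pi_{S^k})\ \simeq\ \Bigl(S^{k-1}\circledast S^{k-1}\xrightarrow{H^0(S^k)\circledast H^0(S^k)}\Omega S^k\circledast\Omega S^k\xrightarrow{\Phi_{0,0}^{\Omega S^k,\Omega S^k}}F_1(S^k\times S^k)\xrightarrow{\bar\chi_1}F_1(S^k)\Bigr).
\]

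I would then pass to integral homology. As $F_1(S^k)\simeq S^{2k-1}\vee S^{3k-2}\vee\cdots$ is $(2k-2)$-connected with $H_{2k-1}(F_1(S^k))\cong\Z$, the integer $d_k$ is, up to sign, the image of a generator of $H_{2k-1}(S^{2k-1})$ under the composite above. The first arrow is the inclusion of the bottom cell of $F_1(S^k)=\Omega S^k\circledast\Omega S^k$, hence an isomorphism on $H_{2k-1}$, carrying the generator to $\sigma(\alpha\otimes\alpha)$, where $\alpha$ generates $\widetilde H_{k-1}(\Omega S^k)\cong\Z$ and $\sigma$ denotes the join suspension. So $d_k$ is, up to sign, the degree of $\bar\chi_1\circ\Phi_{0,0}^{\Omega S^k,\Omega S^k}$ on the bottom cell $S^{2k-1}$.

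To evaluate this I would use the shuffle description of Proposition~\ref{shuffleinhomology}. Writing $\iota_A,\iota_B\colon\Omega S^k\to\Omega S^k\times\Omega S^k$ for the two axis inclusions and $\alpha_1=(\iota_A)_\ast\alpha$, $\alpha_2=(\iota_B)_\ast\alpha$, the two shuffle terms surviving in this degree give $\bigl(\Phi_{0,0}^{\Omega S^k,\Omega S^k}\bigr)_\ast\bigl(\sigma(\alpha\otimes\alpha)\bigr)=\sigma(\alpha_1\otimes\alpha_2)+(-1)^k\,\sigma(\alpha_2\otimes\alpha_1)$, the sign $(-1)^k=(-1)^{(k-1)^2+1}$ arising from transposing two classes of degree $k-1$ together with the reversal of the join parameter. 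On the other hand $\bar\chi_1=\bar\chi\circledast\bar\chi$ with $\bar\chi(\gamma,\delta)=\gamma^{-1}\delta$; restricting $\bar\chi$ to the first axis gives the loop-inversion map, which is $-\mathrm{id}$ on $\pi_{k-1}(\Omega S^k)\cong\pi_k(S^k)$, and restricting to the second axis gives a map homotopic to the identity, so $\bar\chi_\ast\alpha_1=-\alpha$ and $\bar\chi_\ast\alpha_2=\alpha$. Consequently $(\bar\chi_1)_\ast$ sends both $\sigma(\alpha_1\otimes\alpha_2)$ and $\sigma(\alpha_2\otimes\alpha_1)$ to $-\sigma(\alpha\otimes\alpha)$, so the generator of $H_{2k-1}(S^{2k-1})$ maps to $-\bigl(1+(-1)^k\bigr)\sigma(\alpha\otimes\alpha)$. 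Hence $\pm d_k=1+(-1)^k$, as claimed. (Note $1+(-1)^k=\chi(S^k)$: the lemma is the homotopy-theoretic incarnation of the fact that the primary obstruction to $\TC(S^k)\le1$ is the Euler characteristic.)

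The step I expect to be the main obstacle is the homological computation of $\Phi_{0,0}^{\Omega S^k,\Omega S^k}$ in the third paragraph: it rests on the shuffle formula of Proposition~\ref{shuffleinhomology}, and this instance is precisely the $n=m=0$ analogue of the bottom-cell degree computations of Example~\ref{degree-2cells}, obtained in the same way from the standard decomposition of $\Delta^1\times\Delta^1$. A secondary point needing care—though it is classical—is the precise identification of the whisker map $W$ with $[\iota_1,\iota_2]$ within the conventions of Section~\ref{seccionpreliminar}, together with the (legitimate) use of Theorem~\ref{Hopfproducts} at $n=m=0$, where the relevant Hopf invariant is the distinguished one of Example~\ref{cathopfsphere}.
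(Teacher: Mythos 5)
Your proposal is correct and follows essentially the same route as the paper: both identify $H^1_\beta(\pi_{S^k})$, up to homotopy, with the composition $\bar\chi_1\circ\Phi_{0,0}^{\Omega S^k,\Omega S^k}\circ\bigl(H^0(S^k)\circledast H^0(S^k)\bigr)$ by applying Theorem~\ref{Hopfproducts} at $n=m=0$ together with Proposition~\ref{TCcatHopf} (the paper compresses this to ``arguing as in Corollary~\ref{forTC2cell}(c)''), and both then read off the degree $\pm\bigl(1+(-1)^k\bigr)$ on the bottom cell from the $n=m=0$ instance of the shuffle formula (the paper defers this to Example~\ref{degree-2cells}(c), which you have unwound by hand, with the correct sign bookkeeping $(-1)^{(k-1)^2+1}=(-1)^k$ and $\bar\chi_\ast(x\times1)=-x$, $\bar\chi_\ast(1\times x)=x$).
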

\begin{proof}
Arguing as in Corollary \ref{forTC2cell}(c), the Hopf invariant $H^1_\beta(\pi_{S^k})$ is given by the composition
\[
\resizebox{1\textwidth}{!}{\xymatrix{
S^{k-1}\circledast S^{k-1} \ar[rr]^-{H^0(S^k)\circledast H^0(S^k)} && F_0(S^k)\circledast F_0(S^k) \ar[rr]^-{\Phi^{\Omega S^k,\Omega S^k}_{0,0}} && F_1(S^k\times S^k) \ar[r]^-{\bar\chi_1} & F_1(S^k)
}}
\]
where $H^0(S^k):S^{k-1}\to \Omega S^k$ can be identified with inclusion of the bottom cell. By Example \ref{degree-2cells}(c) the degree of the map $\bar\chi_1\circ \Phi^{\Omega S^k,\Omega S^k}_{0,0}$ on the bottom cell is $\pm(1+(-1)^k)$, and the result follows.
\end{proof}

\begin{rem}
Hopf set techniques can be used to give a purely homotopy explanation (alternative to that given in~\cite{GC-V}) of the fact that $\TC(S^k)$ agrees with the category of the homotopy cofiber of the diagonal inclusion $S^k\to S^k\times S^k$. The key point in the Hopf set approach comes from the observation that $H^1_\beta(\pi_{S^k})$ agrees up to sign with the (classical) Hopf invariant of the Whitehead square of the identity on $S^k$. This then gives a purely homotopy-theoretic calculation of the topological complexity of spheres. Furthermore, the idea can be applied successfully to 2-cell complexes $S^p\cup e^{q+1}$ satisfying the metastable condition $2p - 1 < q \leq 3(p - 1)$. Details are given in~\cite{GonzalezGrantVandembroucq}.
\end{rem}

\begin{thm}\label{noGaneaTC}
Let $Y$ be the stunted real projective space $\R P^6/\R P^2$, and let $X=Y\vee Y$. Then {$\TC(X)=4$ and, for all $k\ge2$ even, $\TC(X\times S^{k}) = 5$.}
\end{thm}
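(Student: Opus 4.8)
The plan is to pin down $\TC(X)=4$ by elementary cohomology, and to prove $\TC(X\times S^k)=5$ by combining a zero-divisor lower bound with the upper bound $\TC(X\times S^k)\le5$ coming from the Hopf-invariant machinery of Section~\ref{hifscsec}; this last inequality is the heart of the matter.

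\emph{The value $\TC(X)=4$.} The space $Y$ is $2$-connected of dimension $6$, so $\cat(Y)\le2$; and the generator $x\in H^3(Y;\Z/2)$ satisfies $x^2\neq0$ in $H^6(Y;\Z/2)$, so $\cat(Y)\ge2$. Hence $\cat(X)=\cat(Y\vee Y)=2$ and $\TC(X)\le2\cat(X)=4$. For the reverse inequality, let $a_1,a_2\in H^3(X;\Z/2)$ be the generators coming from the two wedge summands and put $\bar a_i=1\times a_i-a_i\times1$. Since $a_i^2$ generates $H^6$ of the $i$-th summand and products of classes supported on different summands vanish, one computes $\bar a_1^{\,2}\bar a_2^{\,2}=a_1^2\times a_2^2+a_2^2\times a_1^2\neq0$ in $H^{12}(X\times X;\Z/2)$, whence $\TC(X)\ge4$ by the zero-divisor cup-length bound. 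Multiplying further by $\bar s=1\times s-s\times1$, for the generator $s\in H^k(S^k;\Z/2)$, gives $\bar a_1^{\,2}\bar a_2^{\,2}\bar s\neq0$ in $H^\ast((X\times S^k)^{2};\Z/2)$, so $\TC(X\times S^k)\ge5$.

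\emph{The inequality $\TC(X\times S^k)\le5$.} By Lemma~\ref{TCproduct} it suffices to show $\secat(\pi_X\times\pi_{S^k})\le5$. Present $S^k\times S^k=(S^k\vee S^k)\cup_\beta CS^{2k-1}$ as the cofibre of the Whitehead product $\beta=[\iota_1,\iota_2]$, and present $X\times X=(X\times X)^{(11)}\cup_\alpha C\bigl(\bigvee_{4} S^{11}\bigr)$ as the attachment of its four top ($12$-dimensional) cells to the $11$-skeleton. Since $(X\times X)^{(11)}$ is $2$-connected of dimension $11$, Proposition~\ref{secatprops}(3)--(4) yields $\secat_{(X\times X)^{(11)}}(\pi_X)=\TC_{(X\times X)^{(11)}}(X)\le3$, while $\secat_{S^k\vee S^k}(\pi_{S^k})=\TC_{S^k\vee S^k}(S^k)=1$. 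Now apply Theorem~\ref{Hopfproducts} with $p=\pi_X$, $q=\pi_{S^k}$, $n=3$, $m=1$ (both attaching spheres being suspensions): by Proposition~\ref{exteriorjoin} this identifies $(X\times X)\times(S^k\times S^k)$ with the mapping cone of a whisker map $W$ on $\bigl(\bigvee_{4} S^{11}\bigr)\circledast S^{2k-1}$, gives $\secat$ of $\pi_X\times\pi_{S^k}$ over the relevant subspace $\le5$, and produces a lift whose associated Hopf invariant over that cone is the composite
\[
\Bigl(\,\bigvee\nolimits_{4} S^{11}\Bigr)\circledast S^{2k-1}\;\xrightarrow{\;H^3_{\phi,\alpha}(\pi_X)\circledast H^1_{\psi,\beta}(\pi_{S^k})\;}\;J^3(\Omega X)\circledast J^1(\Omega S^k)\;\xrightarrow{\;\Phi^{\Omega X,\Omega S^k}_{3,1}\;}\;J^5\bigl(\Omega(X\times S^k)\bigr).
\]
By Proposition~\ref{caracterizacionshida}, $\secat(\pi_X\times\pi_{S^k})\le5$ provided this composite is null-homotopic.

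\emph{Triviality of the Hopf invariant --- the main obstacle.} Two facts combine, in the spirit of Iwase's proof that Ganea's conjecture fails. First, for $k$ even, Lemma~\ref{TCHopfspheres} identifies $H^1_{\psi,\beta}(\pi_{S^k})$ with twice the inclusion of the bottom cell $S^{2k-1}\hookrightarrow F_1(S^k)=J^1(\Omega S^k)$; up to homotopy it factors through a degree-$2$ self-map of $S^{2k-1}$. Second, $H^3_{\phi,\alpha}(\pi_X)\in\bigl[\bigvee_{4} S^{11},F_3(X)\bigr]=\bigoplus_4\pi_{11}(F_3(X))$ is killed by $2$: since $F_3(X)=J^3(\Omega X)\simeq\widetilde{\Sigma}^3(\Omega X)^{\wedge4}$ is $10$-connected, Hurewicz gives $\pi_{11}(F_3(X))\cong H_{11}(F_3(X))\cong H_8\bigl((\Omega X)^{\wedge4}\bigr)\cong\bigl(\widetilde{H}_2(\Omega X)\bigr)^{\otimes4}$, and $\widetilde{H}_2(\Omega X)=\pi_2(\Omega X)=\pi_3(Y\vee Y)=\pi_3(Y)^{\oplus2}=(\Z/2)^2$ because $Y=\R P^6/\R P^2$ has $4$-skeleton the Moore space $S^3\cup_2 e^4$, so $\pi_3(Y)=H_3(Y)=\Z/2$; thus $\pi_{11}(F_3(X))$ is a $\Z/2$-vector space. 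Using multiplicativity of degrees under the join, the degree-$2$ factor on the $S^{2k-1}$ variable of $H^1_{\psi,\beta}(\pi_{S^k})$ lets us rewrite $H^3_{\phi,\alpha}(\pi_X)\circledast H^1_{\psi,\beta}(\pi_{S^k})$, up to homotopy, as $\bigl(2\,H^3_{\phi,\alpha}(\pi_X)\bigr)\circledast\bigl(S^{2k-1}\hookrightarrow F_1(S^k)\bigr)$; the left factor is now null-homotopic, and the join of a null-homotopic map with anything factors through the contractible cone $\mathrm{pt}\circledast F_1(S^k)\simeq CF_1(S^k)$, hence is itself null. Therefore the displayed composite is null-homotopic, $\TC(X\times S^k)\le5$, and together with the lower bound we obtain $\TC(X\times S^k)=5$. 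The delicate technical points are the precise identification of the attaching data of $(X\times X)\times(S^k\times S^k)$ as an exterior join (so that Theorem~\ref{Hopfproducts} applies with $n=3$, $m=1$) and the homological determination of $\pi_{11}(F_3(X))$; the homotopy-theoretic crux is the cancellation ``$2$-torsion $\circledast$ (degree $2$) $=0$'', which is the $\TC$-analogue of Iwase's mechanism.
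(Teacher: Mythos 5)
Your proposal is correct and takes essentially the same route as the paper: the upper bound $\TC(X\times S^k)\le5$ is established via Lemma~\ref{TCproduct}, the cell decompositions $X\times X=K\cup_\alpha C(\vee_4 S^{11})$ and $S^k\times S^k=(S^k\vee S^k)\cup_\beta CS^{2k-1}$, Theorem~\ref{Hopfproducts} with $n=3$, $m=1$, the Hurewicz computation showing $H^3_{\phi,\alpha}(\pi_X)$ is $2$-torsion, Lemma~\ref{TCHopfspheres} making $H^1_\beta(\pi_{S^k})$ factor through a degree-$\pm2$ map for $k$ even, and the Iwase-style cancellation ``$2$-torsion $\circledast$ even degree $= 0$''. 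The only cosmetic divergence is in the lower bound $\TC(X)\ge4$, where you use the zero-divisor computation $\bar a_1^2\bar a_2^2\neq0$ directly while the paper's primary argument goes through the cup-length of $H^*(Y\times Y;\Z/2)$ together with $\cat(Y\times Y)\le\TC(Y\vee Y)$ (a result of Dranishnikov / Grant--Lupton--Oprea), though the paper itself notes your zero-divisor alternative parenthetically.
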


\begin{proof}
We first show that $\TC(X)=4$. Since $X$ is $6$-dimensional and $2$-connected, the standard dimension/connectivity upper bound gives
\[
\TC(X) \le \frac{2\cdot 6}{3} = 4.
\]
One checks using the cofibration $\R P^2\to \R P^6 \to Y$ that {there is a ring monomorphism} $$\Z/2[u]/(u^3)\hookrightarrow H^*(Y;\Z/2)$$ where $|u|=3$. Therefore the cup-length of $H^*(Y\times Y;\Z/2)$ is $4$, and we have
\[
4\le \cat(Y\times Y)\le \TC(Y\vee Y) = \TC(X)
\]
on applying the result of \cite[Theorem 3.6]{Dran} or \cite[Corollary 2.9]{GLO2}. (Alternatively, the inequality $\TC(X)\ge 4$ follows directly from a zero-divisors calculation in mod $2$ cohomology.) This completes the proof of the first claim.

The lower bound $\TC(X\times S^k)\ge5$ is a quick calculation using zero-divisors in mod $2$ cohomology, and is omitted. We prove that $\TC(X\times S^k)\le 5$ using Hopf invariants.

By Lemma \ref{TCproduct} we have $\TC(X\times S^{k}) = \secat(\pi_X\times \pi_{S^k})$. Let $K$ denote the $11$-skeleton of the standard product cell structure on $X\times X$. Then $X\times X \simeq K\cup_\alpha CS$ where $S = \vee_{i=1}^4 S^{11}$ is a wedge of spheres and $\alpha: S\to K$ is the attaching map of the four $12$-cells. Similarly we have $S^k\times S^k = L\cup_{\beta} e^{2k}$, where $L = S^k\vee S^k$ and $\beta: S^{2k-1}\to L$ is the attaching map of the top cell.

By Proposition \ref{secatprops} we have $\TC_K(X)\le \frac{11}{3}<4$, and since relative topological complexity can increase by at most one on attaching the top cells, $\TC_K(X)=3$. Let $\phi: K\to G_3^{\TC}(X)$ be any (pointed) lift of the inclusion $K\hookrightarrow X\times X$ through $g_3^{\TC}(X)$. The associated Hopf invariant $H^3_{\phi,\alpha}(\pi_X): S\to F_3(X)$ is non-trivial, \red{since otherwise we would have $\TC(X)\leq3$, and is} of order $2$, since \red{the group in which it lies is entirely 2-torsion:}
\begin{align*}
[S,F_3(X)] & \;\cong\; \bigoplus \pi_{11}(F_3(X))
           \;\cong\; \bigoplus H_{11}(F_3(X))
           \;\cong\; \bigoplus H_{8}((\Omega X)^{\wedge 4})\\
           & \;\cong\; \bigoplus H_2(\Omega X)^{\otimes 4}
           \;\cong\; \bigoplus H_3(X)^{\otimes 4}
           \;\cong\; \bigoplus \Z/2.
\end{align*}

By Theorem \ref{Hopfproducts} we have
$
\secat_{X\times X\times L \cup K\times S^{k}\times S^{k}}(\pi_X\times \pi_{S^k}) \le 5
$
with an element of the Hopf set $\mathcal{H}_{W}^5(\pi_X\times \pi_{S^k})$ given by the composition
\[
\xymatrix{
S\circledast S^{2k-1} \ar[rrr]^-{H^3_{\phi,\alpha}(\pi_X)\circledast H^1_\beta(\pi_{S^k})} &&& F_3(X)\circledast F_1(S^{k}) \ar[rr]^-{\Phi^{\Omega X,\Omega S^k}_{3,1}} && F_5(X\times S^{k}).
 }
 \]
 The first map in this composition is null-homotopic. To see this, note that by Lemma \ref{TCHopfspheres} it factors as
 \[
 \xymatrix{
 S\circledast S^{2k-1} \ar[rrr]^-{H^3_{\phi,\alpha}(\pi_X)\circledast H^1_\beta(\pi_{S^k})} \ar[rrrd]_{H^3_{\phi,\alpha}(\pi_X)\circledast (\pm 2)} &&& F_3(X)\circledast F_1(S^{k})\\
 &&& F_3(X)\circledast S^{2k-1} \ar@{^{(}->}[u]
 }
 \]
 where the diagonal map is the join of a map of order $2$ with a map of even degree. Therefore $0\in \mathcal{H}_{W}^5(\pi_X\times \pi_{S^k})$ and $\TC(X\times S^k)\le 5$, as claimed.
\end{proof}

\section{Topological shuffle maps}
\label{secsiete}

Let $p:{\mathcal A}\to X$ and $q:{\mathcal B}\to Y$ be fibrations with respective fibres $A$ and $B$. In this section we construct, for any non negative integers $n$ and $m$, a commutative diagram of the following form:
$$
\xymatrix{
J^{n+1}(A)\times J^{m}(B)\cup J^{n}(A)\times J^{m+1}(B)\ar[rr]^-{{\Psi_{n,m}^{A,B}}} \ar[d]&& J^{n+m+1}(A\times B)\ar[d] \\
J^{n+1}_X({\mathcal A})\times J_Y^{m}({\mathcal B})\cup J_X^{n}({\mathcal A})\times J_Y^{m+1}({\mathcal B})\ar[rr]^-{{\Psi_{n,m}^{{\mathcal A},{\mathcal B}}}} \ar[d]&& J_{X\times Y}^{n+m+1}({\mathcal A}\times {\mathcal B})\ar[d] \\
X\times Y \ar@{=}[rr] &&X\times Y.}
$$
The construction of $\Psi_{n,m}^{{\mathcal A},{\mathcal B}}$ is given in Subsection \ref{shufflemap} 
based on the standard decomposition of a product of simplices into simplices, which is recalled in Subsection \ref{standarddecomp}.
{The needed diagram~(\ref{shufflewjoins}) in Subsection~\ref{HopfProducts} is then obtained by setting $\Phi_{n,m}^{A,B}:=\Psi_{n,m}^{A,B}\circ \xi$ where $$\xi: J^ n(A)\circledast J^m(B) \to J^{n+1}(A)\times J^{m}(B)\cup J^{n}(A)\times J^{m+1}(B)$$
is induced by the maps $\kappa_n\colon CJ^n(A)\to J^{n+1}(A)$ and $\kappa_n\colon CJ^m(B)\to J^{m+1}(B)$ introduced in Remark~\ref{fibredjoins}.}
{In Subsection~\ref{homotopycharacterization} we characterize the homotopy type of $\Phi_{n,m}^{A,B}$ in terms of $(n,m)$ shuffles, and}
in Subsection \ref{homologyComp} we compute the morphism induced by $\Phi_{n,m}^{A,B}$ in homology and prove Theorem~\ref{eli}.

\subsection{Standard decomposition of the product $\Delta^n\times \Delta^m$}
\label{standarddecomp}

We recall here the standard subdivision of the product of two simplices into simplices as described in \cite[p.68]{EilenbergSteenrod}, giving a different presentation which will be more convenient for our future
 computations in homology (see also \cite[pp. 277-278]{Hatcher}).

Let $n$ and $m$ two non-negative integers. Let us denote by $\mathcal{S}_{n+m}$ the set of permutations of $\{1,\ldots, n+m\}$ and by $\mathcal{S}_{n,m}\subset \mathcal{S}_{n+m}$ the subset of $(n,m)$ shuffles. A shuffle $\theta\in  \mathcal{S}_{n,m}$ will be indicated by
$\theta=(\theta(1)\cdots \theta(n)\,||\,\theta(n+1) \cdots \theta(n+m))$
{(so $1\leq\theta(1)<\cdots<\theta(n)\leq n+m$, $1\leq\theta(n+1)<\cdots<\theta(n+m)\leq n+m$, and $\{\theta(1),\ldots,\theta(n)\}\cap\{\theta(n+1),\ldots,\theta(n+m)\}=\varnothing$)} and can be represented in a $n\times m$ rectangular grid with vertices $(i,j)$ ($0\leq i\leq n$, $0\leq j\leq m$) by a continuous path going from $(0,0)$ to $(n,m)$ formed by $n$ horizontal edges and $m$ vertical edges. More precisely, the $k$-th edge of the edgepath $\theta$ is horizontal if $1\leq \theta^{-1}(k)\leq n$ and vertical if $n+1\leq \theta^{-1}(k)\leq n+m$. For instance, the edgepath representing the $(5,3)$ shuffle $(1\,2\,4\,6\,7\,||\,3\,5\,8)$ is shown in Figure 1 below. The signature of the shuffle $\theta$ is denoted by $(-1)^{|\theta|}$ where $|\theta|$ can be interpreted as the number of squares in the grid lying below the path $\theta$.

\vspace{-.45cm}
\psset{xunit=1.0cm,yunit=1.0cm}
\begin{pspicture}(-3,-1)(7.5,4)
\psgrid[subgriddiv=0,gridlabels=0,gridcolor=lightgray](0,0)(0,0)(5,3)
\psline(0,0)(1,0)
\psline(1,0)(2,0)
\psline(2,0)(2,1)
\psline(2,1)(3,1)
\psline(3,1)(3,2)
\psline(3,2)(5,2)
\psline(5,2)(5,3)
\begin{scriptsize}
\psdots[dotstyle=*](0,0)
\psdots[dotstyle=*](1,0)
\psdots[dotstyle=*](2,0)
\psdots[dotstyle=*](2,1)
\psdots[dotstyle=*](3,1)
\psdots[dotstyle=*](3,2)
\psdots[dotstyle=*](4,2)
\psdots[dotstyle=*](5,2)
\psdots[dotstyle=*](5,3)
\rput[bl](-1,-0.2){$(0,0)$}
\rput[bl](5.5,3){$(n,m)$}
\rput[bl](-.7,-.8){\normalsize Figure 1. The $(5,3)$ shuffle $(1\,2\,4\,6\,7\,||\,3\,5\,8)$}
\end{scriptsize}
\end{pspicture}

\vspace{2mm}
The product of the standard simplices $\Delta^n=[e_0,\ldots ,e_{n}]$ and $\Delta^m=[e'_0,\ldots ,e'_{m}]$
(where $(e_i)_{0\leq i\leq n}$ and  $(e'_j)_{0\leq j\leq m}$ are the canonical bases of $\R^ {n+1}$ and $\R^ {m+1}$)
 is the union of $\binom{n+m}{n}$ simplices of dimension $(n+m)$, each of which is determined by a $(n,m)$ shuffle. Explicitly, let $\theta$ be a $(n,m)$ shuffle.
Then the (ordered) $(n+m)$-simplex $\Delta_{\theta}\subset \Delta^ n\times \Delta^m$ associated to $\theta$ is given by
$\Delta_{\theta}=[v_1,\ldots , v_{n+m+1}]$
with $v_k=(e_{i_k},e'_{j_k})$ where $(i_k,j_k)$ is the $k$-th vertex of the edgepath $\theta$.
Observe that $v_1=(e_0,e_0')$ and $v_{n+m+1}=(e_{n},e_{m}')$ for any $\theta$.



 Taken together, the simplices $\Delta_{\theta}$ form a chain
 \begin{equation}\label{lachain}
 \sum\limits_{\theta\in {\mathcal S}_{n,m}}(-1)^{|\theta|}\Delta_{\theta}
\end{equation}

\noindent which represents a generator $\iota_{n,m}$ of
 \[
 H_{n+m}(\Delta^n\times \Delta^m, \partial(\Delta^n \times \Delta^m);\Z)\cong\tilde{H}_{n+m}((\Delta^n\times \Delta^m)/ \partial(\Delta^n \times \Delta^m);\Z)\cong\Z.
 \]


We denote by $\psi_{\theta}:\Delta_{\theta}\to \Delta^{n+m}$ the affine map that
sends the $k$-th vertex of $\Delta_{\theta}$ to the $k$-th vertex of $\Delta^{n+m}$. This gives a homeomorphism from $(\Delta_{\theta},\partial\Delta_{\theta})$ to $(\Delta^{n+m},\partial\Delta^{n+m})$ which induces a map $$\bar{\psi}_{\theta}:\Delta_{\theta}/\partial\Delta_{\theta}\to\Delta^{n+m}/\partial\Delta^{n+m}$$
of degree $1$.

When $\theta$ runs over the set ${\mathcal S}_{n,m}$ of $(n,m)$ shuffles, the maps $\psi_{\theta}$ glue together. In order to see that, it suffices to check that $\psi_{\theta}$ and $\psi_{\theta'}$ agree on $\Delta_{\theta}\cap \Delta_{\theta'}$ when this intersection is a simplex of dimension exactly $n+m-1$. This happens when the edgepaths $\theta$ and $\theta'$ differ in only one vertex, say the $k$-th vertex. In that case $2\leq k\leq n+m$,  {and $\Delta_{\theta}\cap \Delta_{\theta'}$ is the $(n+m-1)$-simplex determined by the (ordered) common vertices in the edge paths of $\theta$ and $\theta'$. But those vertices are sent preserving order, both by $\psi_{\theta}$ and $\psi_{\theta'}$, into the ordered vertices of the face of $\Delta^{n+m}$ opposite to its $k$-th vertex. (In the situation above, $|\theta|$ and $|\theta'|$ differ by one, which explains why~(\ref{lachain}) is a relative cycle, obviously representing a generator $\iota_{n,m}$.)}
As a consequence, the maps $\psi_{\theta}$ produce
a map
\begin{equation}\label{primerfinm}
\psi_{n,m}:\Delta^n \times \Delta^m \to \Delta^{n+m}
\end{equation}
{which} sends the boundary of each $\Delta_{\theta}$ {and, in particular,} the boundary of $\Delta^n\times \Delta^m$ to the boundary of $\Delta^{n+m}$.

When passing to the quotient of $\Delta^ n\times \Delta^m$ first by the boundary $\partial(\Delta^n \times \Delta^m)$ and secondly by the boundaries of the simplices $\Delta_{\theta}$ we get the \textit{shuffle pinch map}
$$\nu_{n,m}:(\Delta^ n\times \Delta^m)/\partial(\Delta^n \times \Delta^m)\to \bigvee\limits_{\theta\in {\mathcal S}_{n,m}}\Delta_{\theta}/\partial \Delta_{\theta}$$
which gives the following morphism in homology:
\begin{equation}\label{inducidoabajo}
\iota_{n,m}\mapsto \bigoplus\limits_{\theta\in {\mathcal S}_{n,m}}(-1)^{|\theta|}\iota_{n+m}^{\theta}.
\end{equation}
Here, as before, $\iota_{n,m}$ is the generator of $H_{n+m}(\Delta^n\times \Delta^m/ \partial(\Delta^n \times \Delta^m);\Z)$ corresponding to the chain~(\ref{lachain}), and $\iota_{n+m}^{\theta}\in H_{n+m}(\Delta_{\theta}/\partial \Delta_{\theta};\Z)=\Z$ is the generator corresponding to the ordered simplex $\Delta_{\theta}$.

The map $\psi_{n,m}:\Delta^n \times \Delta^m \to \Delta^{n+m}$ then fits into the commutative diagram
\begin{equation}\label{diagramPinchmap}
\xymatrix{
\Delta^n\times \Delta^m \ar[rr]^{\psi_{n,m}} \ar[d]&& \Delta^{n+m} \ar[d]\\
\disfrac{\Delta^n\times \Delta^m}{\partial (\Delta^n\times \Delta^m)}\ar[r]_{\nu_{n,m}} &\bigvee\limits_{\theta\in {\mathcal S}_{n,m}}\Delta_{\theta}/\partial \Delta_{\theta}\ar[r]^-{(\bar{\psi}_{\theta})}&\disfrac{\Delta^{n+m}}{\partial \Delta^{n+m}}.
}
\end{equation}
Since the maps $\bar{\psi}_{\theta}$ are of degree $1$, the bottom line induces in homology the morphism $$\iota_{n,m}\mapsto \sum\limits_{\theta\in {\mathcal S}_{n,m}}(-1)^{|\theta|}\iota_{n+m}
$$ where $\iota_{n+m}$ corresponds to the standard generator of $H_{n+m}(\Delta^{n+m}/\partial \Delta^{n+m};\Z)$.

\subsection{The maps $\Psi_{n,m}^{{\mathcal A},{\mathcal B}}$}
\label{shufflemap}

For any integer $k$ we shall denote by $T_k$ the map
$$\begin{array}{rcl}
T_k: {\mathcal A}^k_X\times {\mathcal B}^k_Y & \to & ({\mathcal A}\times {\mathcal B})^k_{X\times Y}\\
\big((a_1,\ldots,a_k), (b_1,\ldots,b_k)\big)&\mapsto & \big((a_1,b_1),\ldots,(a_k,b_k)\big),
\end{array}$$
and by $\Delta_k^{\mathcal A}:{\mathcal A}\to {\mathcal A}^k_X$ the $k$-th diagonal, $a\mapsto (a,a,\ldots,a)$. In addition, for an $(n,m)$ shuffle $\theta$, we shall define two sequences of integers $\alpha_0,\ldots ,\alpha_n$ and $\beta_0,\ldots ,\beta_m$.
In the grid illustrated in Figure~1, $\alpha_i$ will correspond to the number of vertices of the edgepath $\theta$ belonging to the column $i$, while $\beta_j$ will correspond to the number of vertices of the edgepath $\theta$ belonging to the row $j$. Explicitly, if $n\neq 0$, we set
$$
\begin{array}{lr}
\alpha_0=\theta(1),\\
\alpha_i=\theta(i+1)- \theta(i),& 1\leq i\leq n-1,\\
\alpha_{n}=n+m+1-\theta(n),
\end{array}
$$
and, if $n=0$, we set $\alpha_0=m+1$. Similarly, if $m\neq 0$, we set
$$
\begin{array}{lr}
\beta_0=\theta(n+1),\\
\beta_j=\theta(n+j+1)-\theta(n+j),& 1\leq j\leq m-1,\\
\beta_{m}=n+m+1-\theta(n+m),
\end{array}
$$
and, if $m=0$, we set $\beta_0=n+1$.

We thus define the map
\[
\delta_{\theta}:{\mathcal A}_X^{n+1}\times {\mathcal B}_Y^{m+1}\to ({\mathcal A}\times {\mathcal B})_{X\times Y}^{n+m+1}
\]
by $\delta_{\theta}=T_{n+m+1}\circ\big((\Delta^{\mathcal A}_{\alpha_0}\times \cdots \times \Delta^{\mathcal A}_{\alpha_{n}})\times (\Delta^{\mathcal B}_{\beta_0}\times \cdots \times \Delta^{\mathcal B}_{\beta_{m}})\big)$.

If two edgepaths $\theta$ and $\theta'$ differ in only one vertex, say the $k$-th vertex, the maps $\delta_{\theta}$ and $\delta_{\theta'}$ differ in only their $k$-th component. Since, in that case, the $k$-th {(barycentric)} components of $\psi_{\theta}$ and $\psi_{\theta'}$ vanish on $\Delta_{\theta}\cap \Delta_{\theta'}$ we obtain that the maps
\[
\xymatrix{
{\mathcal A}_X^{n+1}\times {\mathcal B}_Y^{m+1}\times \Delta_{\theta}\ar[r]^-{\delta_{\theta}\times \psi_{\theta}} &  ({\mathcal A}\times {\mathcal B})_{X\times Y}^{n+m+1}\times  \Delta^{n+m}\ar[r] &  J^{n+m}_{X\times Y}({\mathcal A}\times {\mathcal B})
}
\]
glue together to give a map
$${\mathcal A}^{n+1}_X\times {\mathcal B}^{m+1}_Y\times \Delta^ n\times \Delta^m\to J^{n+m}_{X\times Y}({\mathcal A}\times {\mathcal B}).$$
This map induces
$$\begin{array}{rcl}
\psi_{n,m}^{{\mathcal A},{\mathcal B}}:J^n_X({\mathcal A})\times J^m_Y({\mathcal B})& \to& J^{n+m}_{X\times Y}({\mathcal A}\times {\mathcal B})\\
\left(\langle \mathbf{a} \mid \mathbf{t}\rangle,\langle \mathbf{b} \mid \mathbf{s}\rangle\right)& \mapsto &
\langle \delta_{\theta}(\mathbf{a},\mathbf{b})\mid \psi_{\theta}(\mathbf{t},\mathbf{s})\rangle, \quad (\mathbf{t},\mathbf{s})\in \Delta_{\theta}.
\end{array}$$
The map $\psi_{n,m}^{{\mathcal A},{\mathcal B}}$ is over $X\times Y$ and {restricts to} a map between fibres
\begin{equation}\label{segundafinm}
\psi_{n,m}^{{ A},{ B}}:J^n({ A})\times J^m({ B}) \to J^{n+m}({A}\times { B})
\end{equation}
which is given by the construction above for trivial fibrations $A\to *$ and $B\to *$. That is, $\psi_{n,m}^{{ A},{ B}}$ comes from the gluing of maps $\delta_{\theta}\times \psi_{\theta}$ where $\delta_{\theta}:{ A}^{n+1}\times { B}^{m+1}\to  ({ A}\times { B})^{n+m+1}$ is given by
$$\delta_{\theta}=T_{n+m+1}\circ((\Delta^{A}_{\alpha_0}\times \cdots \times \Delta^{ A}_{\alpha_{n}})\times (\Delta^{B}_{\beta_0}\times \cdots \times \Delta^{B}_{\beta_{m}})).$$
{In turn,~(\ref{segundafinm}) recovers~(\ref{primerfinm}) for $A=B=\ast$.}

\begin{rem} We notice that the following commutative diagram produced by the map $\psi_{n,m}^{{\mathcal A},{\mathcal B}}$
$$\xymatrix{
J^n_X({\mathcal A})\times J^m_Y({\mathcal B})\ar[rr]^{\psi_{n,m}^{{\mathcal A},{\mathcal B}}}\ar[rd] && J^{n+m}_{X\times Y}({\mathcal A}\times {\mathcal B})\ar[ld]\\
&X\times Y
}$$
permits us to recover the well-known inequality $\secat(p\times q)\leq \secat(p)+\secat(q)$, which is classically established using the open cover definition of the sectional category.
\end{rem}

\begin{lem} The following diagram
$$\xymatrix{
J_X^{n+1}({\mathcal A}) \times J_Y^{m}({\mathcal B}) \ar[ddr]_{\psi_{n+1,m}^{{\mathcal A},{\mathcal B}}} &
J_X^{n}({\mathcal A})\times J_Y^{m}({\mathcal B})\ar[l]_{\hspace{3mm}\jmath_n\times \mathrm{Id}}\ar[r]^{\mathrm{Id}\times \jmath_m\hspace{3mm}}
\ar[d]^{{\psi_{n,m}^{{\mathcal A},{\mathcal B}}}}
& J_X^{n}({\mathcal A}) \times J_Y^{m+1}({\mathcal B})\ar[ddl]^{\psi_{n,m+1}^{{\mathcal A},{\mathcal B}}}\\ &
{J^{n+m}_{X\times Y}({\mathcal A}\times {\mathcal B})}
\ar[d]|-{\raisebox{1.5mm}{\scriptsize${\rule{0mm}{2mm}\jmath_{n+m}}$}} & \\
& J^{n+m+1}_{X\times Y}({\mathcal A}\times {\mathcal B})&
}$$
is commutative.
\end{lem}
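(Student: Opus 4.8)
The plan is to reduce the commutativity of the diagram to an explicit comparison of barycentric-coordinate formulas, building directly on the definitions of $\psi^{\mathcal{A},\mathcal{B}}_{n,m}$, $\psi^{\mathcal{A},\mathcal{B}}_{n+1,m}$, $\psi^{\mathcal{A},\mathcal{B}}_{n,m+1}$ and of the face inclusions $\jmath_n$, $\jmath_m$, $\jmath_{n+m}$ from Remark~\ref{fibredjoins}. Since all four maps in the diagram are defined over $X\times Y$ and are constructed by gluing the elementary maps $\delta_\theta\times\psi_\theta$ along the simplices $\Delta_\theta$ of the standard decomposition of a product of simplices, it suffices to verify the two triangles (left and right) on a single simplex $\Delta_\theta$ at a time; the gluing is then automatic because the maps $\psi_\theta$ agree on common faces (as established in Subsection~\ref{standarddecomp}). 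So I would fix an $(n,m)$ shuffle $\theta$ and trace an element $(\langle\mathbf{a}\mid\mathbf{t}\rangle,\langle\mathbf{b}\mid\mathbf{s}\rangle)$ with $(\mathbf{t},\mathbf{s})\in\Delta_\theta$ through both sides.

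First I would treat the left triangle. The composite $\jmath_{n+m}\circ\psi^{\mathcal{A},\mathcal{B}}_{n,m}$ sends our element to $\langle\delta_\theta(\mathbf{a},\mathbf{b}),\ast\mid\psi_\theta(\mathbf{t},\mathbf{s}),0\rangle$ in $J^{n+m+1}_{X\times Y}(\mathcal{A}\times\mathcal{B})$. For the other side, the map $\jmath_n\times\mathrm{Id}$ sends $(\langle\mathbf{a}\mid\mathbf{t}\rangle,\langle\mathbf{b}\mid\mathbf{s}\rangle)$ to $(\langle\mathbf{a},a_{n+1}\mid\mathbf{t},0\rangle,\langle\mathbf{b}\mid\mathbf{s}\rangle)$, which is then acted on by $\psi^{\mathcal{A},\mathcal{B}}_{n+1,m}$. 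The key observation is that a point of $\Delta^{n+1}\times\Delta^m$ whose last $\Delta^{n+1}$-barycentric coordinate vanishes lies in precisely those simplices $\Delta_{\theta'}$ of the $(n+1,m)$-decomposition whose edgepath passes through the vertex $(n,m)$ along a \emph{horizontal} final edge, i.e.\ $\theta'(n+1)=n+m+1$ in the $(n+1,m)$-numbering. Such a $\theta'$ is in natural bijection with our $\theta$ (it is ``$\theta$ with an extra horizontal step appended''), and under this bijection one computes $\alpha'_i=\alpha_i$ for $0\le i\le n$, $\alpha'_{n+1}=1$, and $\beta'_j=\beta_j$ for all $j$. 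Feeding this into the definition of $\delta_{\theta'}$, the extra diagonal factor $\Delta^{\mathcal A}_{\alpha'_{n+1}}=\Delta^{\mathcal A}_1=\mathrm{Id}_{\mathcal A}$ produces exactly the extra coordinate $a_{n+1}$, and the extra barycentric coordinate contributed by $\psi_{\theta'}$ is $0$; so $\delta_{\theta'}(\mathbf{a},a_{n+1},\mathbf{b})=(\delta_\theta(\mathbf{a},\mathbf{b}),(a_{n+1},\ast\text{-component}))$ and $\psi_{\theta'}(\mathbf{t},0,\mathbf{s})=(\psi_\theta(\mathbf{t},\mathbf{s}),0)$. In $J^{n+m+1}_{X\times Y}(\mathcal{A}\times\mathcal{B})$, the class with last barycentric coordinate $0$ does not depend on the last total-space coordinate (that is the defining relation $\sim$), so both sides give the same element. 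The right triangle is handled symmetrically, now appending a \emph{vertical} final edge and using $\Delta^{\mathcal B}_1=\mathrm{Id}_{\mathcal B}$.

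Two bookkeeping points need care, and I expect the indexing bijection between shuffles to be the main obstacle — not because it is deep, but because one must be scrupulous about which numbering convention ($(n,m)$ versus $(n+1,m)$ versus $(n+m)$) is in force at each stage, and about how the sequences $\alpha_\bullet$, $\beta_\bullet$ transform. The second point is well-definedness on the overlaps of the $\Delta_\theta$'s: I would note that since each of $\psi^{\mathcal{A},\mathcal{B}}_{n,m}$, $\psi^{\mathcal{A},\mathcal{B}}_{n+1,m}$, $\psi^{\mathcal{A},\mathcal{B}}_{n,m+1}$ is already a well-defined continuous map (as constructed in Subsection~\ref{shufflemap}), and $\jmath_n$, $\jmath_m$, $\jmath_{n+m}$ are continuous, the two composites along each triangle are \emph{a priori} continuous maps of the product join; having checked they agree on the dense (indeed, all of the) union of simplices $\mathcal A_X^{n+1}\times\mathcal B_Y^{m+1}\times\Delta_\theta$ with the quotient relations imposed, equality follows on the nose. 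Finally, everything respects base points — the appended coordinate is the base point $\ast$ and the appended barycentric coordinate is $0$, matching the base-point convention~(\ref{puntobasedeljoin}) — so the diagram commutes in the pointed category as required.
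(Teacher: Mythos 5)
Your overall approach is the same as the paper's: fix a shuffle $\theta$, trace an explicit element, identify the $(n+1,m)$-shuffle $\theta'$ obtained by appending a horizontal edge, compute the modified sequences $\alpha'_\bullet$, $\beta'_\bullet$, and conclude using the defining relation $\sim$ of the fibred join when the last barycentric coordinate vanishes.

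There is, however, a bookkeeping error in exactly the step you flagged as delicate. You claim $\beta'_j = \beta_j$ for all $j$, but the new vertex $(n+1,m)$ appended to the edgepath lies in row $m$, so row $m$ gains a vertex: the correct formula (which the paper uses) is $\beta'_j = \beta_j$ for $0\le j\le m-1$ and $\beta'_m = \beta_m + 1$. Indeed, $\beta'_m = (n+1)+m+1 - \theta'(n+1+m) = n+m+2 - \theta(n+m) = \beta_m + 1$. With your values, $(\Delta^{\mathcal B}_{\beta'_0}\times\cdots\times\Delta^{\mathcal B}_{\beta'_m})$ would output only $\sum\beta_j = n+m+1$ coordinates, so $T_{n+m+2}$ cannot even be applied and $\delta_{\theta'}$ is not well-defined; and the final component of $\delta_{\theta'}(\mathbf{a},a_{n+1},\mathbf{b})$ is actually $(a_{n+1},b_m)$, not ``$(a_{n+1},\ast)$''. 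Fortunately, as you correctly observe, the last barycentric coordinate of $\psi_{\theta'}(\mathbf{t},0,\mathbf{s})$ is $0$, so the class in $J^{n+m+1}_{X\times Y}(\mathcal A\times\mathcal B)$ does not depend on the final total-space coordinate; the two composites $\langle\delta_\theta(\mathbf a,\mathbf b),(a_{n+1},b_m)\mid\psi_\theta(\mathbf t,\mathbf s),0\rangle$ and $\langle\delta_\theta(\mathbf a,\mathbf b),(a_n,b_{m+1})\mid\psi_\theta(\mathbf t,\mathbf s),0\rangle$ thus agree, and the diagram commutes. So the conclusion stands, but the intermediate formulas need the above correction.
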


\begin{proof} Let $\langle \mathbf{a} \mid \mathbf{t}\rangle \in J_X^{n}({\mathcal A})$ and $\langle \mathbf{b} \mid \mathbf{s}\rangle \in J_Y^{m}({\mathcal B})$ with $(\mathbf{t},\mathbf{s})\in \Delta_{\theta}\subset \Delta^n\times \Delta^m$ and let $(\alpha_i)$ and $(\beta_j)$ the sequences of integers associated to the $(n,m)$ shuffle $\theta$.

Recall from Remark~\ref{fibredjoins} that
$$(\jmath_n\times \mathrm{Id})(\langle \mathbf{a} \mid \mathbf{t}\rangle,\langle \mathbf{b} \mid \mathbf{s}\rangle)=(\langle \mathbf{a},a_{n+1} \mid \mathbf{t},0\rangle,\langle \mathbf{b} \mid \mathbf{s}\rangle).$$
The inclusion $\Delta^n\times \Delta^m\hookrightarrow \Delta^{n+1}\times \Delta^m$, $(\mathbf{t},\mathbf{s})\mapsto
(\mathbf{t},0,\mathbf{s})$, restricts to the inclusion of $\Delta_{\theta}$ into $\Delta_{\theta'}\subset \Delta^{n+1}\times \Delta^m$ where $\theta'$ is the  $(n+1,m)$ shuffle given by
$$(\theta(1)\cdots \theta(n) \,\,\,n+m+1\,||\,\theta(n+1) \cdots \theta(n+m)).$$
Through this inclusion $\Delta_{\theta}$ can be seen as the face of $\Delta_{\theta'}$ opposite to its vertex $(e_{n+1},e'_m)$. The  sequences $(\alpha'_i)$ and $(\beta'_j)$ associated to $\theta'$ are given by
$$\alpha_0,\ldots ,\alpha_n,1 \qquad \beta_0,\ldots ,\beta_m+1$$
As a consequence we obtain
$${\psi_{n+1,m}^{{\mathcal A},{\mathcal B}}}(\jmath_n\times \mathrm{Id})(\langle \mathbf{a} \mid \mathbf{t}\rangle,\langle \mathbf{b} \mid \mathbf{s}\rangle)=\langle \delta_{\theta}(\mathbf{a},\mathbf{b}),(a_{n+1},b_m) \mid \psi_{\theta}(\mathbf{t},\mathbf{s}),0\rangle.$$
Similarly we see
$${\psi_{n,m+1}^{{\mathcal A},{\mathcal B}}}(\mathrm{Id}\times \jmath_m)(\langle \mathbf{a} \mid \mathbf{t}\rangle,\langle \mathbf{b} \mid \mathbf{s}\rangle)=\langle \delta_{\theta}(\mathbf{a},\mathbf{b}),(a_{n},b_{m+1}) \mid \psi_{\theta}(\mathbf{t},\mathbf{s}),0\rangle$$
which gives the result.
\end{proof}

As a consequence the maps $\psi^ {{\mathcal A},{\mathcal B}}_{n+1,m}$ and $\psi^ {{\mathcal A},{\mathcal B}}_{n,m+1}$ glue together and give a map
$$\xymatrix{
 \Psi_{n,m}^{{\mathcal A},{\mathcal B}}: J_X^{n+1}({\mathcal A})\times J_Y^{m}({\mathcal B})
\cup J_X^{n}({\mathcal A})\times J_Y^{m+1}({\mathcal B}) \ar[r]& J^{n+m+1}_{X\times Y}({\mathcal A}\times {\mathcal B})
}$$
which is over $X\times Y$. As before this map induces at the level of fibres a map
$$\xymatrix{
 \Psi_{n,m}^{{A},{B}}:J^{n+1}({ A})\times J^{m}({ B})
\cup J^{n}({A})\times J^{m+1}({B}) \ar[r]& J^{n+m+1}({ A}\times { B}).
}$$

\subsection{The maps $\Phi_{n,m}^{A,B}$}
\label{homotopycharacterization}

For any $k$, the inclusion $J^k(A) \to J^{k+1}(A)$ factors as
$$\xymatrix{
J^k(A)\ar[rr]\ar[rd] && J^{k+1}(A)\\
& CJ^{k}(A)\ar[ru]
}$$
where the map $CJ^{k}(A)\to J^{k+1}(A)$ sends $[\langle \mathbf{a} \mid \mathbf{t}\rangle,u]$ to $\langle \mathbf{a},\ast \mid (1-u)\mathbf{t},u\rangle$ (see Remark~\ref{fibredjoins}). With these maps the following diagram is commutative
$$\xymatrix{
CJ^n(A) \times J^{m}(B) \ar[d]& J^n(A) \times J^{m}(B)\ar[l]\ar[r]\ar@{=}[d] & J^n(A) \times CJ^{m}(B)\ar[d]\\
J^{n+1}(A) \times J^{m}(B)  & J^n(A) \times J^{m}(B)\ar[l]\ar[r] &J^n(A) \times J^{m+1}(B)
}$$
and induces a map $\xi: J^ n(A)\circledast J^ m(B)\to J^{n+1}(A)\times J^{m}(B)
\cup J^{n}(A)\times J^{m+1}(B)$. The composition $\Psi_{n,m}^{A,B}\circ \xi$ is a map
$$
\Phi_{n,m}^{A,B}: J^n(A)\circledast J^m(B)\to  J^{n+m+1}(A\times B)
$$
which we call {the} \emph{topological {$(n,m)$} shuffle map {for $(A,B)$}}. Note that by construction this map is natural in $A$ and $B$, {and renders the commutative diagram~(\ref{shufflewjoins}) which is the key ingredient in the proof of Theorem~\ref{Hopfproducts} for identifying Hopf invariants of products of fibrations.}

Items (a) and (b) of Remarks~\ref{rmkpinchmap} and~(\ref{lasidentificacionesobvias}) imply that $\Phi_{n,m}^{A,B}$ is a map between spaces each of which has the homotopy type of a $(n+m+1)$-fold suspension. In such terms, $\Phi_{n,m}^{A,B}$ turns out to be a sum of suspended maps which we describe next.

{Start by noticing that} $\Phi_{n,m}^{A,B}$ is given on $J^n(A)\times C J^m(B)$ by
$$
(\langle\mathbf{a}\!\mid\mathbf{t}\rangle,[\langle \mathbf{b}\!\mid\mathbf{s}\rangle,u]) \mapsto
\langle \delta_{\theta}(\mathbf{a},(\mathbf{b},\ast))\mid \psi_{\theta}(\mathbf{t},(1-u)\mathbf{s},u)\rangle
$$
when $(\mathbf{t},(1-u)\mathbf{s},u)\in \Delta_{\theta}, \theta \in {\mathcal S}_{n,m+1}$, and its expression on $CJ^n(A)\times J^m(B)$ is
$$
([\langle\mathbf{a}\!\mid\mathbf{t}\rangle,u],\langle \mathbf{b}\!\mid\mathbf{s}\rangle) \mapsto
\langle \delta_{\theta}((\mathbf{a},\ast),\mathbf{b})\mid \psi_{\theta}((1-u)\mathbf{t},u,\mathbf{s})\rangle
$$
when $((1-u)\mathbf{t},u,\mathbf{s})\in \Delta_{\theta}, \theta \in {\mathcal S}_{n+1,m}$. {In particular, for the identification map}
$$
r:J^{n+m+1}(A\times B)\to \widetilde{\Sigma}^{n+m+1}(A\times B)^{n+m+2}=\left((A\times B)^{n+m+2}\right)\wedge \disfrac{\Delta^{n+m+1}}{\partial(\Delta^{n+m+1})}
$$
in~(\ref{lasidentificacionesobvias}), the composition $r\circ\Phi^{A,B}_{n,m}$ is (based-)constant on $J^n(A)\times J^m(B)=J^n(A)\times CJ^m(B) \hspace{1mm}\cap\hspace{1mm} CJ^n(A)\times J^m(B)$ as well as on all points $(\ast,[\ast,u])\in J^n(A)\times CJ^m(B)$ and $([\ast,u],\ast)\in CJ^n(A)\times J^m(B)$ for $u\in I$, where base points are as in~(\ref{puntobasedeljoin}). Consequently $r\circ\Phi^{A,B}_{n,m}$ factors through the difference pinch map:
$$\xymatrix{
J^n(A)\circledast J^m(B) \ar[r]^{\Phi^{A,B}_{n,m}\;\,}\ar[d]_-{\nu} &
J^{n+m+1}(A\times B) \ar[d]^-r \\
\widetilde{\Sigma}\left(J^n(A)\times J^m(B)\right) \vee \widetilde{\Sigma}\left(J^n(A)\times J^m(B)\right) \ar[r]^-{(\Phi',\Phi'')\;} & \left((A\times B)^{n+m+2}\right)\wedge \disfrac{\Delta^{n+m+1}}{\partial(\Delta^{n+m+1})}.
}$$
The (co)components $\Phi'$ and $\Phi''$ are given by
$$
\Phi'\left(\left[(\langle\mathbf{a}\mid\mathbf{t}\rangle,\langle\mathbf{b}\mid\mathbf{s}\rangle),u\rule{0mm}{3mm}\right]\rule{0mm}{3.5mm}\right)=\delta_{\theta}(\mathbf{a},(\mathbf{b},\ast)) \wedge \psi_{\theta}(\mathbf{t},(1-u)\mathbf{s},u)
$$
when $(\mathbf{t},(1-u)\mathbf{s},u)\in \Delta_{\theta}, \theta \in {\mathcal S}_{n,m+1}$, and by
$$
\Phi''\left(\left[(\langle\mathbf{a}\mid\mathbf{t}\rangle,\langle\mathbf{b}\mid\mathbf{s}\rangle),u\rule{0mm}{3mm}\right]\rule{0mm}{3.5mm}\right)=\delta_{\theta}((\mathbf{a},\ast),\mathbf{b}) \wedge \psi_{\theta}((1-u)\mathbf{t},u,\mathbf{s})
$$
when $((1-u)\mathbf{t},u,\mathbf{s})\in \Delta_{\theta}, \theta \in {\mathcal S}_{n+1,m}$.

\begin{prop}\label{clarificado} Both $\Phi'$ and $\Phi''$ factor through the identification map
\begin{equation*}
\begin{array}{rcl}
\widetilde{\Sigma}(J^n(A) \times J^{m}(B)) &\stackrel{\mathrm{pr}}
\longrightarrow& (A^{n+1}\times B^{m+1})\wedge \disfrac{\Delta^n\times \Delta^m \times I}{\partial(\Delta^n\times \Delta^m \times I)} \\ \left[ \left(\langle \mathbf{a}\mid \mathbf{t}\rangle, \langle \mathbf{b}\mid \mathbf{s}\rangle\right), u \rule{0mm}{3mm}\right] & \longmapsto & (\mathbf{a},\mathbf{b})\wedge [\mathbf{t},\mathbf{s},u]
\end{array}
\end{equation*}
and the resulting maps $$\widehat{\Phi}',\widehat{\Phi}''\colon (A^{n+1}\times B^{m+1})\wedge \disfrac{\Delta^n\times \Delta^m \times I}{\partial(\Delta^n\times \Delta^m \times I)} \to \left((A\times B)^{n+m+2}\right)\wedge \disfrac{\Delta^{n+m+1}}{\partial(\Delta^{n+m+1})}$$ are described up to homotopy by
\begin{eqnarray}
\widehat{\Phi}'&\simeq&\sum\limits_{\theta\in {\mathcal S}_{n,m+1}}(-1)^{|\theta|}\widetilde{\Sigma}^{n+m+1}\delta_{\theta}\circ(\mathrm{Id}_{A^{n+1}}\times \mathrm{Id}_{B^m}\times i_1)\label{lafiprima}\\
\widehat{\Phi}''&\simeq&(-1)^{m}
\sum\limits_{\theta\in {\mathcal S}_{n+1,m}}(-1)^{|\theta|}\widetilde{\Sigma}^{n+m+1}\delta_{\theta}\circ(\mathrm{Id}_{A^{n}}\times i_1\times \mathrm{Id}_{B^{m+1}})\label{lafidobleprima}
\end{eqnarray}
where $i_1:Z\hookrightarrow Z\times Z$ is the inclusion $i_1(z)=(z,*)$.
\end{prop}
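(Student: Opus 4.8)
The plan is to verify both assertions by examining $\Phi'$ and $\Phi''$ one simplex $\Delta_\theta$ at a time, and then invoking the homological description of the shuffle pinch map from Subsection~\ref{standarddecomp}. The two cases are entirely parallel, so I will concentrate on $\Phi'$ and point out at the end the single place where $\Phi''$ differs.

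\emph{Factorization through $\mathrm{pr}$.} Recall that $(\mathbf{s},u)\mapsto((1-u)\mathbf{s},u)$ is the standard identification of $\Delta^m\times I$ with the cone $C\Delta^m=\Delta^{m+1}$. First I would check directly that it carries $\partial(\Delta^n\times\Delta^m\times I)$ into $\partial(\Delta^n\times\Delta^{m+1})$ (the three kinds of boundary faces going to boundary faces, with the face $u=1$ collapsing onto the last vertex of $\Delta^{m+1}$). Composing with $\psi_{n,m+1}$ and using, as recorded after~(\ref{primerfinm}), that $\psi_{n,m+1}$ sends $\partial(\Delta^n\times\Delta^{m+1})$ into $\partial\Delta^{n+m+1}$, one sees that the defining formula for $\Phi'$ returns the base point whenever $(\mathbf{t},\mathbf{s},u)\in\partial(\Delta^n\times\Delta^m\times I)$ and also whenever $(\mathbf{a},\mathbf{b})$ is the base point of $A^{n+1}\times B^{m+1}$ (then $\delta_\theta(\mathbf{a},(\mathbf{b},\ast))$ is the base point), while it otherwise depends on the class in $\widetilde{\Sigma}(J^n(A)\times J^m(B))$ only through $(\mathbf{a},\mathbf{b})$ and $(\mathbf{t},\mathbf{s},u)$. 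Since $\mathrm{pr}$ is precisely the quotient recording this data modulo $\partial(\Delta^n\times\Delta^m\times I)$ and the base point of $A^{n+1}\times B^{m+1}$, the map $\Phi'$ descends to the asserted $\widehat{\Phi}'$, and similarly $\Phi''$ descends to $\widehat{\Phi}''$.

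\emph{Homotopy description.} Unwinding the definition of $\Phi'$ over each $\Delta_\theta$, with $\theta$ now running over $\mathcal{S}_{n,m+1}$, one checks that $\widehat{\Phi}'$ is \emph{literally} the composite: smash the identity of $A^{n+1}\times B^{m+1}$ with $c:=\nu_{n,m+1}\circ(\text{cone})\colon \frac{\Delta^n\times\Delta^m\times I}{\partial}\to\bigvee_\theta\Delta_\theta/\partial\Delta_\theta$; on the $\theta$-th wedge summand apply $\delta_\theta'\wedge\bar{\psi}_\theta$, where $\delta_\theta'=\delta_\theta\circ(\mathrm{Id}_{A^{n+1}}\times\mathrm{Id}_{B^m}\times i_1)$ and $\bar{\psi}_\theta$ is the degree-$1$ map $\Delta_\theta/\partial\Delta_\theta\to\Delta^{n+m+1}/\partial\Delta^{n+m+1}$; then fold. (What makes this an \emph{equality}, not merely a homotopy, is that a point interior to $\Delta_\theta$ gets routed through the $\theta$-summand and there hit by $\delta_\theta'\wedge\psi_\theta$, whereas a point on a shared face --- equivalently, in $\partial(\Delta^n\times\Delta^{m+1})$ --- is sent to the base point on both sides.) Since $\delta_\theta'$ acts on the left smash factor and $\bar{\psi}_\theta$ on the right, $\delta_\theta'\wedge\bar{\psi}_\theta=(\widetilde{\Sigma}^{n+m+1}\delta_\theta')\circ(\mathrm{Id}\wedge\bar{\psi}_\theta)$, so the composite equals $\nabla\circ\bigl(\bigvee_\theta\widetilde{\Sigma}^{n+m+1}\delta_\theta'\bigr)\circ(\mathrm{Id}\wedge g)$ with $g:=(\bigvee_\theta\bar{\psi}_\theta)\circ\nu_{n,m+1}\circ(\text{cone})\colon\frac{\Delta^n\times\Delta^m\times I}{\partial}\to\bigvee_\theta S^{n+m+1}$. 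By~(\ref{inducidoabajo}) (for the pair $n,m+1$) together with $\deg\bar{\psi}_\theta=1$, the map $g$ carries the fundamental class to $\sum_\theta(-1)^{|\theta|}\iota^\theta_{n+m+1}$; as its source is a sphere (of dimension $\ge 2$ whenever $\mathcal{S}_{n,m+1}$ has more than one element), $g$ is homotopic to the standard pinch followed by a wedge of degree-$(-1)^{|\theta|}$ self-maps. Smashing with $A^{n+1}\times B^{m+1}$ and using that $\widetilde{\Sigma}^{n+m+1}(A^{n+1}\times B^{m+1})$ is a suspension --- so that precomposition is additive and $f\circ(k\cdot\mathrm{Id})=k\cdot f$, and the self-map $\mathrm{Id}\wedge(\text{deg }k)$ of a suspension is $k\cdot\mathrm{Id}$ --- the composite collapses, after identifying $\frac{\Delta^n\times\Delta^m\times I}{\partial}$ with $\Delta^{n+m+1}/\partial$ via its natural orientation, to $\sum_\theta(-1)^{|\theta|}\widetilde{\Sigma}^{n+m+1}\delta_\theta'$, which is~(\ref{lafiprima}).

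For $\Phi''$ one proceeds identically with $\mathcal{S}_{n+1,m}$ replacing $\mathcal{S}_{n,m+1}$ and $\delta_\theta''=\delta_\theta\circ(\mathrm{Id}_{A^n}\times i_1\times\mathrm{Id}_{B^{m+1}})$, \emph{except} that the cone parameter is now attached to the $\Delta^n$ factor: the identification of $\Delta^n\times\Delta^m\times I$ with $\Delta^{n+1}\times\Delta^m$ is $(\mathbf{t},\mathbf{s},u)\mapsto((1-u)\mathbf{t},u,\mathbf{s})$, which is the standard cone identification $\Delta^n\times I\xrightarrow{\cong}\Delta^{n+1}$ preceded by transposing the $1$-dimensional $I$-factor past the $m$-dimensional $\Delta^m$-factor; that transposition has degree $(-1)^m$, and this is the sole source of the global sign in~(\ref{lafidobleprima}). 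I expect the orientation bookkeeping --- coordinating the degree of the cone/transposition identification with the signs $(-1)^{|\theta|}$ coming from~(\ref{inducidoabajo}), and confirming that absorbing the degree-$1$ maps $\bar{\psi}_\theta$ contributes no further sign --- to be the only genuinely delicate step; the remainder is routine, the construction of Subsection~\ref{shufflemap} having been arranged precisely so that it comes out.
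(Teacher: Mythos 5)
Your proof takes essentially the same route as the paper's: you factor $\widehat{\Phi}'$ through $\bar{h}_{n,m+1}$ (your ``cone'' identification), the shuffle pinch $\nu_{n,m+1}$, and the wedge of maps $\delta_\theta\wedge\bar{\psi}_\theta$, then use the degree computation in~(\ref{inducidoabajo}) together with co-H additivity to collapse to the signed sum, and trace the global $(-1)^m$ in the $\widehat{\Phi}''$ case to the degree of $\bar{h}_{n+1,m}$, exactly as the paper does (you supply somewhat more explicit justification for the homotopy-additivity step and for why $\bar{h}_{n+1,m}$ has degree $(-1)^m$). One minor slip: the parenthetical ``equivalently, in $\partial(\Delta^n\times\Delta^{m+1})$'' is inaccurate, since shared faces $\Delta_\theta\cap\Delta_{\theta'}$ lie in the \emph{interior} of $\Delta^n\times\Delta^{m+1}$; the argument is unaffected, though, because $\psi_\theta$ still carries all of $\partial\Delta_\theta$ (shared faces included) into $\partial\Delta^{n+m+1}$.
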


\begin{proof}
The asserted factorizations of $\Phi'$ and $\Phi''$ follow from an easy check. Furthermore, {$\widehat{\Phi}'$ factors as}
$$
\resizebox{1\textwidth}{!}{
\xymatrix{
(A^{n+1}\times B^{m+1})\wedge \disfrac{\Delta^n\times \Delta^m \times I}{\partial(\Delta^n\times \Delta^m \times I)}\ar[d]^{\mathrm{Id}\wedge \bar h_{n,m+1}}\\
(A^{n+1}\times B^{m+1})\wedge \disfrac{\Delta^n\times \Delta^{m+1}}{\partial(\Delta^n\times \Delta^{m+1})}\ar[r]^{\mathrm{Id}\wedge\nu_{n,m+1}}&
\bigvee_{\theta\in{\mathcal S}_{n,m+1} }(A^{n+1}\times B^{m+1})\wedge \disfrac{\Delta_{\theta}}{\partial(\Delta_{\theta})}\ar[d]^{(\mathrm{Id}_{A^{n+1}}\times \mathrm{Id}_{B^{m}}\times i_1)\wedge{\mathrm{Id}}}\\
&\bigvee_{\theta\in{\mathcal S}_{n,m+1} }(A^{n+1}\times B^{m+2})\wedge \disfrac{\Delta_{\theta}}{\partial(\Delta_{\theta})}\ar[d]^{(\delta_{\theta}\wedge \bar\psi_{\theta})}\\
&\widetilde{\Sigma}^{n+m+1}(A\times B)^{n+m+2}=((A\times B)^{n+m+2})\wedge \disfrac{\Delta^{n+m+1}}{\partial(\Delta^{n+m+1})}
}}
$$
{where the maps $\bar\psi_{\theta}$ are as in~(\ref{diagramPinchmap}), and} $\bar h_{n,m+1}$ is induced by the (surjective) map $h_{n,m+1}:\Delta^n\times \Delta^m\times I \to  \Delta^n\times \Delta^{m+1}$ given by $h_{n,m+1}(\mathbf{t},\mathbf{s},u)=(\mathbf{t},(1-u)\mathbf{s},u)$. Since the maps $\bar h_{n,m+1}$ and $\bar\psi_{\theta}$ are of degree~$1$, {(\ref{inducidoabajo}) implies that the above composition is homotopic to the right-hand term in~(\ref{lafiprima}). The equivalence in~(\ref{lafidobleprima}) is obtained in the same manner, now using the (surjective) map $h_{n+1,m}:\Delta^n\times \Delta^m\times I \to \Delta^{n+1}\times \Delta^{m}$ given by $h_{n+1,m}((\mathbf{t},\mathbf{s},u))=((1-u)\mathbf{t},u,\mathbf{s})$}
%
and its induced map $\bar h_{n+1,m}$ which is of degree $(-1)^m$.
\end{proof}

Proposition~\ref{clarificado}, the triangle in~(\ref{difzetadif}), and the functoriality of standard difference pinch maps yields:
\begin{cor} There is a commutative diagram
$$\xymatrix{
J^n(A)\circledast J^m(B) \ar[d]_-{\mathrm{pr}\,\cdot\,\zeta} \ar[rr]^-{\Phi^{A,B}_{n,m}}& &J^{n+m+1}(A\times B)\ar[d]^-r\\
(A^{n+1}\times B^{m+1})\wedge \disfrac{\Delta^n\times \Delta^m \times I}{\partial(\Delta^n\times \Delta^m \times I)} \ar[rr]& &\widetilde{\Sigma}^{n+m+1}(A\times B)^{n+m+2}
}$$
where the bottom horizontal map is obtained by subtracting~(\ref{lafiprima}) from~(\ref{lafidobleprima}).
\end{cor}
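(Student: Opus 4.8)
The plan is to deduce the corollary formally by chaining together the factorization of $r\circ\Phi^{A,B}_{n,m}$ through the difference pinch map with the two identities supplied by Proposition~\ref{clarificado} and the triangle in~(\ref{difzetadif}). Recall that immediately before Proposition~\ref{clarificado} we produced a (strict) factorization $r\circ\Phi^{A,B}_{n,m}=(\Phi',\Phi'')\circ\nu$, where $\nu$ is the difference pinch map of Remark~\ref{rmkpinchmap}(c) and $(\Phi',\Phi'')=\nabla\circ(\Phi'\vee\Phi'')$ is the map out of the wedge determined by the two (co)components. First I would substitute, on the one hand, the factorizations $\Phi'=\widehat{\Phi}'\circ\mathrm{pr}$ and $\Phi''=\widehat{\Phi}''\circ\mathrm{pr}$ of Proposition~\ref{clarificado}, so that $\Phi'\vee\Phi''=(\widehat{\Phi}'\vee\widehat{\Phi}'')\circ(\mathrm{pr}\vee\mathrm{pr})$; and, on the other hand, the identity $\nu=\tilde\nu\circ\zeta$ read off from the triangle in~(\ref{difzetadif}), where $\tilde\nu$ denotes the \emph{standard} difference pinch map on $\widetilde{\Sigma}(J^n(A)\times J^m(B))$.

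The one point that needs a word of justification is that $\mathrm{pr}$ commutes with the relevant difference pinch maps. For this I would observe that the target $(A^{n+1}\times B^{m+1})\wedge\big((\Delta^n\times\Delta^m\times I)/\partial(\Delta^n\times\Delta^m\times I)\big)$ of $\mathrm{pr}$ is itself naturally a reduced suspension: since $\partial(\Delta^n\times\Delta^m\times I)=\partial(\Delta^n\times\Delta^m)\times I\cup(\Delta^n\times\Delta^m)\times\partial I$, one may collapse the $\partial(\Delta^n\times\Delta^m)$-direction first, identifying this space with $(A^{n+1}\times B^{m+1})\wedge\widetilde{\Sigma}\big((\Delta^n\times\Delta^m)/\partial(\Delta^n\times\Delta^m)\big)$. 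Under this identification the formula $\mathrm{pr}\big([(\langle\mathbf{a}\mid\mathbf{t}\rangle,\langle\mathbf{b}\mid\mathbf{s}\rangle),u]\big)=(\mathbf{a},\mathbf{b})\wedge[\mathbf{t},\mathbf{s},u]$ exhibits $\mathrm{pr}$ as the reduced suspension (in the parameter $u$) of the evident identification map $J^n(A)\times J^m(B)\to(A^{n+1}\times B^{m+1})\wedge\big((\Delta^n\times\Delta^m)/\partial(\Delta^n\times\Delta^m)\big)$. Since the standard difference pinch map is natural with respect to reduced suspensions of maps, this gives $(\mathrm{pr}\vee\mathrm{pr})\circ\tilde\nu=\tilde\nu'\circ\mathrm{pr}$, with $\tilde\nu'$ the standard difference pinch map on the target of $\mathrm{pr}$.

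Assembling the pieces, one obtains
\[
r\circ\Phi^{A,B}_{n,m}=\nabla\circ(\widehat{\Phi}'\vee\widehat{\Phi}'')\circ(\mathrm{pr}\vee\mathrm{pr})\circ\tilde\nu\circ\zeta=\nabla\circ(\widehat{\Phi}'\vee\widehat{\Phi}'')\circ\tilde\nu'\circ\mathrm{pr}\circ\zeta,
\]
and by the last sentence of Remark~\ref{rmkpinchmap}(c) the composite $\nabla\circ(\widehat{\Phi}'\vee\widehat{\Phi}'')\circ\tilde\nu'$ is the difference $\widehat{\Phi}''-\widehat{\Phi}'$. Since $\mathrm{pr}\circ\zeta=\mathrm{pr}\cdot\zeta$ is precisely the left-hand vertical map of the asserted square, and since Proposition~\ref{clarificado} identifies $\widehat{\Phi}'$ and $\widehat{\Phi}''$ up to homotopy with the right-hand sides of~(\ref{lafiprima}) and~(\ref{lafidobleprima}), the square commutes with bottom horizontal map $\widehat{\Phi}''-\widehat{\Phi}'$, i.e.\ the map obtained by subtracting~(\ref{lafiprima}) from~(\ref{lafidobleprima}). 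The only mild obstacle in this argument is the bookkeeping of the previous paragraph — checking that all the suspension coordinates and boundary collapses line up so that $\mathrm{pr}$ really is a suspension map — but no computation beyond that already carried out in Proposition~\ref{clarificado} is involved.
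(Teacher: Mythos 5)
Your argument is correct and is exactly the detailed elaboration of the paper's terse justification, which simply cites Proposition~\ref{clarificado}, the triangle in~(\ref{difzetadif}), and the naturality of the standard difference pinch map. Your verification that $\mathrm{pr}$ is the reduced suspension of the identification map $J^n(A)\times J^m(B)\to(A^{n+1}\times B^{m+1})\wedge\bigl((\Delta^n\times\Delta^m)/\partial(\Delta^n\times\Delta^m)\bigr)$ — via the identity $\frac{\Delta^n\times\Delta^m\times I}{\partial(\Delta^n\times\Delta^m\times I)}\cong\frac{\Delta^n\times\Delta^m}{\partial(\Delta^n\times\Delta^m)}\wedge\frac{I}{\partial I}$ — is precisely the point the paper elides under the phrase ``functoriality of standard difference pinch maps,'' and you then correctly apply Remark~\ref{rmkpinchmap}(c) to read off the difference $\widehat{\Phi}''-\widehat{\Phi}'$.
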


\subsection{Homology computations}
\label{homologyComp}

{In this section we let $R$ stand for a} commutative ring, and assume that  $H_*(A;R)$ and $H_*(B;R)$ are free graded $R$-modules of finite type. {We use the short hand $H(Z):=H_*(Z;R)$ and $\widetilde{H}(Z):=\widetilde{H}_*(Z;R)$,} and apply the standard K\" unneth formulae to identify the homology {(respectively reduced homology)} of {Cartesian} products {(respectively smash products)} with the tensor product of {the homologies (respectively reduced homologies) of the factors. In particular, the map induced in reduced homology by the identification map $X\times Y\to X\wedge Y$ corresponds under the isomorphism $\widetilde{H}(X\times Y)\cong\widetilde{H}(X)\otimes R\oplus R\otimes\widetilde{H}(Y)\oplus\widetilde{H}(X)\otimes\widetilde{H}(Y)$ with projection onto the third summand.}

\begin{thm}\label{morhipsmafterdesusp} Let $\rho:(A\times B)^{n+m+2} \to (A\times B)^{\wedge n+m+2}$ be the identification map. {Up to an automorphism of $\widetilde{H}(A \times B)^{\otimes n+m+2}$,} the morphism induced in {reduced} homology
$$
s^{-(n+m+1)}(\Phi_{n,m}^{A,B})_*:\widetilde{H}(A)^{\otimes n+1} \otimes \widetilde{H}(B)^{\otimes m+1}\to \widetilde{H}(A\times B)^{\otimes n+m+2}
$$
is given by {the restriction of}
\begin{multline*}
(-1)^{m}
\sum\limits_{\theta\in {\mathcal S}_{n+1,m}}(-1)^{|\theta|}(\rho\delta_{\theta})_*\circ(\mathrm{Id}_{A^{n}}\times i_1\times \mathrm{Id}_{B^{m+1}})_*\\
-\sum\limits_{\theta\in {\mathcal S}_{n,m+1}}(-1)^{|\theta|}(\rho\delta_{\theta})_*\circ(\mathrm{Id}_{A^{n+1}}\times \mathrm{Id}_{B^m}\times i_1)_*.
\end{multline*}
\end{thm}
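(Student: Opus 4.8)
The plan is to obtain the statement by applying reduced homology to the commutative square in the corollary following Proposition~\ref{clarificado} (the one with vertical maps $\mathrm{pr}\cdot\zeta$ and $r$, and bottom map the difference of~(\ref{lafidobleprima}) and~(\ref{lafiprima})), and then to identify the homological effect of each of the four arrows. All homology is taken with coefficients in $R$ and we use the K\"unneth isomorphisms freely, as is legitimate under the running freeness and finiteness hypotheses. Since Proposition~\ref{clarificado} already describes the bottom map up to homotopy as an explicit signed sum of $(n{+}m{+}1)$-fold suspensions of the maps $\delta_\theta$ precomposed with the inclusions $i_1$, the only real work is to check that the two vertical maps realise, in reduced homology, precisely the identifications implicit in the statement.

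For the right-hand vertical map, recall from~(\ref{lasidentificacionesobvias}) (with $n$ replaced by $n{+}m{+}1$ and $A$ by $A\times B$) that $r$ followed by $\widetilde{\Sigma}^{n+m+1}\rho$ is a homotopy equivalence $J^{n+m+1}(A\times B)\xrightarrow{\;\simeq\;}\widetilde{\Sigma}^{n+m+1}(A\times B)^{\wedge n+m+2}$. Hence, on post-composing the whole square with $(\widetilde{\Sigma}^{n+m+1}\rho)_*$, the right vertical becomes an isomorphism onto $s^{n+m+1}\bigl(\widetilde{H}(A\times B)^{\otimes n+m+2}\bigr)$; up to an automorphism of $\widetilde{H}(A\times B)^{\otimes n+m+2}$ this is the canonical identification of $\widetilde{H}\bigl(J^{n+m+1}(A\times B)\bigr)$ used to interpret the target of the theorem, and this is exactly the ``up to an automorphism'' clause. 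For the left-hand vertical map, Remarks~\ref{rmkpinchmap}(b) give that $\zeta$ followed by the collapse $\widetilde{\Sigma}(J^n(A)\times J^m(B))\to\widetilde{\Sigma}(J^n(A)\wedge J^m(B))$ is a homotopy equivalence; combining this with the identifications $r_A\colon J^n(A)\to\widetilde{\Sigma}^nA^{n+1}$ and $r_B\colon J^m(B)\to\widetilde{\Sigma}^mB^{m+1}$ of~(\ref{lasidentificacionesobvias}) and the obvious homeomorphism $(\Delta^n/\partial)\wedge(\Delta^m/\partial)\wedge(I/\partial I)\cong(\Delta^n\times\Delta^m\times I)/\partial(\Delta^n\times\Delta^m\times I)$, one finds that $(\mathrm{pr}\cdot\zeta)_*$ is split injective with image the K\"unneth summand
\[
\widetilde{H}(A)^{\otimes n+1}\otimes\widetilde{H}(B)^{\otimes m+1}\otimes\widetilde{H}\bigl((\Delta^n\times\Delta^m\times I)/\partial\bigr)
\]
of $\widetilde{H}\bigl((A^{n+1}\times B^{m+1})\wedge(\Delta^n\times\Delta^m\times I)/\partial\bigr)$, the last factor being $R$ concentrated in degree $n{+}m{+}1$. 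Corestricting along this inclusion is exactly the identification of $\widetilde{H}\bigl(J^n(A)\circledast J^m(B)\bigr)$ with $s^{n+m+1}\bigl(\widetilde{H}(A)^{\otimes n+1}\otimes\widetilde{H}(B)^{\otimes m+1}\bigr)$ used to interpret the source.

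Combining these: applying $\widetilde{H}(-)$ to the square, post-composing with $(\widetilde{\Sigma}^{n+m+1}\rho)_*$, and using~(\ref{lafiprima})--(\ref{lafidobleprima}) to compute $(\widehat{\Phi}''-\widehat{\Phi}')_*$ as the $(n{+}m{+}1)$-fold suspension of
\begin{multline*}
(-1)^m\sum_{\theta\in\mathcal{S}_{n+1,m}}(-1)^{|\theta|}(\rho\delta_\theta)_*\circ(\mathrm{Id}_{A^n}\times i_1\times\mathrm{Id}_{B^{m+1}})_*\\
-\sum_{\theta\in\mathcal{S}_{n,m+1}}(-1)^{|\theta|}(\rho\delta_\theta)_*\circ(\mathrm{Id}_{A^{n+1}}\times\mathrm{Id}_{B^m}\times i_1)_*,
\end{multline*}
one reads off that $s^{-(n+m+1)}(\Phi_{n,m}^{A,B})_*$ equals the restriction of this morphism to the summand $\widetilde{H}(A)^{\otimes n+1}\otimes\widetilde{H}(B)^{\otimes m+1}\subseteq\widetilde{H}(A^{n+1}\times B^{m+1})$, which is the assertion. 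The signs $(-1)^m$ and $(-1)^{|\theta|}$ and the handling of the degree-$\pm1$ maps $\bar\psi_\theta$, $\bar h_{n,m+1}$, $\bar h_{n+1,m}$ are all inherited directly from Proposition~\ref{clarificado} and require no new computation.

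The step I expect to be the main obstacle is the careful verification that $(\mathrm{pr}\cdot\zeta)_*$ is the inclusion of the correct K\"unneth summand. This requires unwinding Remarks~\ref{rmkpinchmap}(a)--(b), the homeomorphism $CS(1)\times CS(2)\cong C(S(1)\circledast S(2))$ of Remark~\ref{conosenelextjoicon}, and the identification maps of~(\ref{lasidentificacionesobvias}), while keeping track of the fact that the target of $\mathrm{pr}$ involves the \emph{unreduced} Cartesian powers $A^{n+1}$ and $B^{m+1}$ --- so the relevant summand sits strictly inside $\widetilde{H}(A^{n+1}\times B^{m+1})$, and consequently only a restriction of the full signed shuffle sum is detected. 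Morally this is the classical computation of the homology of a join in terms of the shifted reduced homology of the smash, but it is the point at which the word ``restriction'' and the ``up to an automorphism'' clause in the statement are earned.
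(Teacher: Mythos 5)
Your proposal is correct and follows essentially the same route as the paper's own proof: both pass through the commutative square of the corollary after Proposition~\ref{clarificado}, identify $(\mathrm{pr}\circ\zeta)_*$ as the $(n{+}m{+}1)$-fold suspension of the K\"unneth inclusion $\widetilde{H}(A)^{\otimes n+1}\otimes\widetilde{H}(B)^{\otimes m+1}\hookrightarrow\widetilde{H}(A^{n+1}\times B^{m+1})$, use that $\tilde\rho\circ r$ is a homotopy equivalence, and then globally desuspend. Your write-up is more explicit about the left vertical arrow than the paper's terse one-line justification, but the content and the key steps are the same.
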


\begin{proof} Let $\tilde\rho$ denote $\widetilde{\Sigma}^{n+m+1}\rho$. Since the sequence of identification maps
$$
\xymatrix{
J^{n+m+1}(A\times B)\ar[r]^-r &\widetilde{\Sigma}^{n+m+1}(A\times B)^{n+m+2} \ar[r]^-{\tilde\rho} & \widetilde{\Sigma}^{n+m+1}(A\times B)^{\wedge n+m+2}
}
$$
is a homotopy equivalence, the morphism induced by $\Phi_{n,m}^{A,B}$ in {reduced} homology coincides, {up to an automorphism,} with the morphism induced by {the composition $\mathrm{pr}\circ \zeta$ followed by the morphism induced by} the difference
\begin{multline*}
(-1)^{m}
\sum\limits_{\theta\in {\mathcal S}_{n+1,m}}(-1)^{|\theta|}\widetilde{\Sigma}^{n+m+1}\rho\delta_{\theta}\circ(\mathrm{Id}_{A^{n}}\times i_1\times \mathrm{Id}_{B^{m+1}})\\
-\sum\limits_{\theta\in {\mathcal S}_{n,m+1}}(-1)^{|\theta|}\widetilde{\Sigma}^{n+m+1}\rho \delta_{\theta}\circ(\mathrm{Id}_{A^{n+1}}\times \mathrm{Id}_{B^m}\times i_1).
\end{multline*}
{The result follows since the composition $\mathrm{pr}\circ\zeta$ induces the ($n+m+1$)-suspension of the inclusion $\widetilde{H}(A)^{\otimes n+1}\otimes \widetilde{H}(B)^{\otimes m+1}\hookrightarrow\widetilde{H}(A^{n+1}\times B^{m+1})$, and because} all the suspensions involved are $(n+m+1)${-fold} suspensions, so we can globally desuspend without altering signs.
\end{proof}

We now compute {the morphism induced by $\Phi^{A,B}_{n,m}$} in some particular cases.
Recall that, if $z\in  H(Z)$ is a primitive class, then, for any $k\geq 1$,
$$(\Delta_k^Z)_*(z)=\sum\limits_{i=1}^k 1\otimes \cdots 1 \otimes z_i\otimes 1\otimes \cdots \otimes 1$$
where $z_i=z$. For $k=1$, $\Delta_1^Z=\mathrm{Id}_Z$.

\begin{exs}
\begin{enumerate}[(a)]
\item Let $n=m=0$. In this case ${\mathcal S}_{1,0}=\{(1||\hspace{1.3mm})\}$ and $\hspace{.5mm}{\mathcal S}_{0,1}=\{(\hspace{1.3mm}||1)\}$ are both reduced to a single element. Thus, after a single desuspension, the morphism induced by $\Phi_{0,0}^{A,B}$ is given by\footnote{From this point on we omit any reference {to} the fact that we really want the obvious restriction of the morphism, and that each of the given descriptions is up to an automorphism.}
$$
\rho_*(T_2)_*(i_1\times\Delta_2^B)_*-\rho_*(T_2)_*(\Delta_2^A\times i_1)_*.
$$
For $x\in \widetilde{H}(A) $ and $y \in \widetilde{H}(B)$ primitive homology classes, we have
\begin{align*}
(T_2)_*(i_1\times\Delta^B_2)_*(x\otimes y)& {}=(T_2)_*((x\otimes 1)\otimes(1\otimes y+y\otimes 1))\\
&{}=(x\otimes 1)\otimes (1\otimes y)+(x\otimes y)\otimes (1\otimes 1).
\end{align*}
Therefore applying $\rho_*$, only the first term $(x\otimes 1)\otimes (1\otimes y)$ remains.

In the same manner,
\begin{align*}
(T_2)_*(\Delta_2^A\times i_1)_*(x\otimes y) & {}=(T_2)_*((1\otimes x+x\otimes 1)\otimes(y\otimes 1))\\
 & {}=(-1)^{|x||y|}(1\otimes y)\otimes (x\otimes 1)+(x\otimes y)\otimes (1\otimes 1).
 \end{align*}
Applying $\rho_*$, only the term $(-1)^{|x||y|}(1\otimes y)\otimes (x\otimes 1)$ remains and we finally get
$$s^{-1}(\Phi_{0,0}^{A,B})_*(x\otimes y)=(x\otimes 1)\otimes (1\otimes y)-(-1)^{|x||y|}(1\otimes y)\otimes (x\otimes 1).$$
\item Let $n=1$ and $m=0$. In this case, we get the following expression for $x_1,x_2\in \widetilde{H}(A) $ and $y \in \widetilde{H}(B)$ primitive homology classes:
\begin{align*}
s^{-2}(\Phi_{1,0}^{A,B})_*(x_1\otimes x_2\otimes y)
& = (x_1\otimes 1)\otimes (x_2\otimes 1)\otimes (1\otimes y)\\
&\quad -(-1)^{|x_2||y|}(x_1\otimes 1)\otimes (1\otimes y)\otimes (x_2\otimes 1)\\
&\quad +(-1)^{(|x_1|+|x_2|)|y|}(1\otimes y)\otimes (x_1\otimes 1)\otimes (x_2\otimes 1).
\end{align*}
\end{enumerate}
\end{exs}

In order to generalize these computations (in Proposition~\ref{shuffleinhomology} below), we need to fix some preliminary notation. For a permutation $\sigma\in\mathcal{S}_k$ and non-negative integers $d_1,\ldots,d_k$, let $(-1)^{s(\sigma,d_1,\ldots,d_k)}$ be the sign introduced by the morphism induced in reduced homology by maps $$\widetilde{\sigma}: Z^{\wedge k}\to Z^{\wedge k}, \hspace{2mm}(z_1,\ldots,z_k)\mapsto(z_{\sigma^{-1}(1)},\ldots,z_{\sigma^{-1}(k)}),$$ upon evaluation at a tensor $v_1\otimes \cdots \otimes v_k\in \widetilde{H}(Z)^{\otimes k}$ with $|v_i|=d_i$. Explicitly,
$$
\widetilde{\sigma}_*(v_1\otimes \cdots \otimes v_k)=(-1)^{s(\sigma, d_1,\ldots, d_k)}\hspace{.5mm} v_{\sigma^{-1}(1)}\otimes \cdots \otimes v_{\sigma^{-1}(k)}
$$
where $s(\sigma, d_1,\ldots, d_k)=\sum_{(i,j)\in I_\sigma}d_id_j$ and $I_\sigma=\{(i,j)\colon i<j \mbox{ and }\sigma(j)<\sigma(i)\}$ is the set of $\sigma$-inversions. Since we only care about the mod 2 value of $s(\sigma, d_1,\ldots, d_k)$, we can think of $s(\sigma, d_1,\ldots, d_k)$ as the number of shiftings that contribute with a $-1$ sign in permuting $v_1\otimes \cdots \otimes v_k$ to $v_{\sigma^{-1}(1)}\otimes \cdots \otimes v_{\sigma^{-1}(k)}$; that is, the number of $\sigma$-inversions $(i,j)$ for which $d_id_j$ odd.

\begin{prop}\label{shuffleinhomology}
Let $x_i\in \widetilde{H}(A) $ and $y_j \in \widetilde{H}(B)$ be primitive homology classes. The morphism
$${s^{-(n+m+1)}(\Phi_{n,m}^{A,B})_*\colon{}} \widetilde{H}(A)^{\otimes n+1} \otimes \widetilde{H}(B)^{\otimes m+1}\to
\widetilde{H}(A\times B)^{\otimes n+m+2}$$
takes $x_1\otimes \cdots \otimes x_{n+1}\otimes y_{n+2}\otimes \cdots \otimes y_{n+m+2}$ to 
$$(-1)^{m}\sum\limits_{\sigma \in {\mathcal S}_{n+1,m+1}}(-1)^{\varepsilon(\sigma)}z_{\sigma^{-1}(1)}\otimes \cdots \otimes z_{\sigma^{-1}(n+m+2)}$$
where ${\mathcal S}_{n+1,m+1}$ denotes the set of $(n+1,m+1)$ shuffles,
$$
z_{i}=\begin{cases}
x_{i}\otimes 1, & \mbox{ if } 1\leq i \leq n+1; \\
1\otimes y_{i}, & \mbox{ if } n+2\leq i \leq n+m+2,
\end{cases}
$$
and ${\varepsilon}(\sigma)=|\sigma|+s(\sigma,|x_1|,\ldots,|x_{n+1}|,|y_{n+2}|,\ldots,|y_{n+m+2}|)$.
\end{prop}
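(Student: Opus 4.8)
The plan is to derive the formula directly from Theorem~\ref{morhipsmafterdesusp}, which already writes $s^{-(n+m+1)}(\Phi_{n,m}^{A,B})_{*}$ as the difference of two signed sums, over $\mathcal{S}_{n+1,m}$ and over $\mathcal{S}_{n,m+1}$, of the composites $(\rho\delta_{\theta})_{*}\circ(\mathrm{Id}\times i_{1}\times\mathrm{Id})_{*}$. First I would substitute the defining expression $\delta_{\theta}=T_{n+m+2}\circ\big((\Delta^{A}_{\alpha_{0}}\times\cdots\times\Delta^{A}_{\alpha_{n}})\times(\Delta^{B}_{\beta_{0}}\times\cdots\times\Delta^{B}_{\beta_{m}})\big)$ (with the evident shift of indices for the two families of shuffles) and expand everything by means of the coproduct formula $(\Delta^{Z}_{k})_{*}(z)=\sum_{i}1\otimes\cdots\otimes z\otimes\cdots\otimes 1$, valid for a primitive class $z$; for primitives this expansion, and its iteration, is sign-free. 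The map $(\rho\, T_{n+m+2})_{*}$ is then understood as the Koszul-signed ``transposition'' $H(A)^{\otimes n+m+2}\otimes H(B)^{\otimes n+m+2}\to H(A\times B)^{\otimes n+m+2}$ followed by projection onto $\widetilde{H}(A\times B)^{\otimes n+m+2}$, i.e.\ by annihilation of every summand having a tensor factor equal to the unit $1\otimes 1$. The whole problem thereby reduces to bookkeeping: which summands of the resulting multi-sum survive, and with what sign.

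The combinatorial core is a rigidity statement. Fix $\theta\in\mathcal{S}_{n+1,m}$ and display its edgepath in the grid with columns $0,\dots,n$ ``real'' and column $n+1$ a padding column carrying the basepoint adjoined by $i_{1}$ on the $A$-side, and rows $0,\dots,m$. Since there are exactly $n+m+2$ primitive classes $x_{1},\dots,x_{n+1},y_{n+2},\dots,y_{n+m+2}$ to be distributed among the $n+m+2$ vertices of the edgepath, and since the vertices in a fixed column, respectively row, form a contiguous segment of the path, there is exactly one way to designate for each real column $i$ its ``$x_{i+1}$-vertex'' and for each row $j$ its ``$y$-vertex'' so that no vertex is designated twice; equivalently, $\rho_{*}$ annihilates all but a single summand of the expansion of $(\rho\delta_{\theta})_{*}(\cdots)$. (The designation is forced by running along the edgepath: the padding column can carry no $x$, which pins the $y$-vertices of the topmost rows, and this propagates downwards.) The factor-ordering of the surviving summand is the permutation of the $z_{i}$ recorded by the $(n+1,m+1)$-shuffle $\widehat{\theta}$ obtained from $\theta$ by appending the index $n+m+2$ to its $y$-block; symmetrically, for $\theta\in\mathcal{S}_{n,m+1}$ one appends $n+m+2$ to the $x$-block. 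These two assignments $\theta\mapsto\widehat{\theta}$ are bijections of $\mathcal{S}_{n+1,m}$, respectively $\mathcal{S}_{n,m+1}$, onto the set of $(n+1,m+1)$-shuffles whose last letter is a $y$, respectively an $x$; together they partition $\mathcal{S}_{n+1,m+1}$.

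It then remains to match the signs. The coefficient of the surviving summand attached to a shuffle $\theta$ is the product of: the sign $(-1)^{|\theta|}$ already present in Theorem~\ref{morhipsmafterdesusp}; the Koszul sign incurred by commuting the graded primitives past one another through $T_{n+m+2}$ and through the final reindexing onto $z_{\widehat{\theta}^{-1}(1)}\otimes\cdots\otimes z_{\widehat{\theta}^{-1}(n+m+2)}$, which equals $(-1)^{s(\widehat{\theta},|x_{1}|,\dots,|y_{n+m+2}|)}$ by the very definition of $s$; and the overall $(-1)^{m}$ of the first sum, respectively the overall $-1$ of the second. Using that $|\widehat{\theta}|=|\theta|$ in the first case while $|\widehat{\theta}|=|\theta|+m+1$ in the second (appending a $Y$ to a word of $n+1$ $X$'s and $m$ $Y$'s creates no new inversion, whereas appending an $X$ to a word of $n$ $X$'s and $m+1$ $Y$'s creates $m+1$ of them), one checks that for each $\theta$ both sums contribute precisely $(-1)^{m}(-1)^{|\widehat{\theta}|}(-1)^{s(\widehat{\theta},\ldots)}\,z_{\widehat{\theta}^{-1}(1)}\otimes\cdots\otimes z_{\widehat{\theta}^{-1}(n+m+2)}$. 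Summing over all shuffles and writing $\sigma$ for $\widehat{\theta}$ yields the stated formula, the residual indeterminacy ``up to an automorphism of $\widetilde{H}(A\times B)^{\otimes n+m+2}$'' being inherited verbatim from Theorem~\ref{morhipsmafterdesusp}.

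The main obstacle is the rigidity claim together with the sign reconciliation above: both are elementary but delicate, and the cleanest route is probably to establish them by induction on $n+m$, exploiting the compatibility of the maps $\psi^{\mathcal{A},\mathcal{B}}_{n,m}$ with the stabilizations $\jmath_{n},\jmath_{m}$ proved in the lemma of Subsection~\ref{shufflemap}; the Examples computed just above (the cases $n=m=0$ and $n=1$, $m=0$) serve as a check on the signs.
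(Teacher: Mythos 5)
Your proposal is correct and follows essentially the same strategy as the paper's proof, just presented in the forward direction. The paper starts from the claimed sum over $\mathcal{S}_{n+1,m+1}$ and splits it according to whether $\sigma(n+m+2)=n+m+2$ or $\sigma(n+1)=n+m+2$; each piece is then matched, term by term, with one of the two signed sums coming from Theorem~\ref{morhipsmafterdesusp}, using the same bijections $\mathcal{S}_{n+1,m}\to\{\sigma:\sigma(n+m+2)=n+m+2\}$ and $\mathcal{S}_{n,m+1}\to\{\sigma:\sigma(n+1)=n+m+2\}$ that you describe by ``appending $n+m+2$ to the $y$- (resp.\ $x$-) block,'' and the same sign identities $|\sigma|=|\theta|$ resp.\ $|\sigma|=|\theta|+m+1$ and $s(\sigma,\ldots)=s(\theta,\ldots)$ resp.\ $s(\sigma,\ldots)=s(\theta,\ldots)+|x_{n+1}|(|y_{n+2}|+\cdots+|y_{n+m+2}|)$. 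Where you differ is in emphasis: the paper compresses the crucial ``rigidity'' step --- that $(\rho\delta_\theta)_*$ applied to the padded tensor $x_1\otimes\cdots\otimes x_{n+1}\otimes 1\otimes y_{n+2}\otimes\cdots$ (resp.\ $\cdots\otimes y_{n+m+2}\otimes 1$) has exactly one surviving term after projection onto the smash factors, equal to the appropriately shuffled tensor --- into the phrase ``a straightforward calculation gives,'' whereas you sketch the underlying combinatorial argument (propagation of forced assignments along the edgepath starting from the padding column). This is a worthwhile elaboration of a point the paper leaves implicit, not a different method. Your remark that the Koszul sign ``by the very definition of $s$'' equals $(-1)^{s(\widehat\theta,\ldots)}$ is stated rather than derived, and your closing suggestion of an induction on $n+m$ is a fallback you do not carry out; but these gaps are exactly where the paper is also terse, so they do not indicate a structural flaw in your approach.
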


In other words, the sum of the statement can be written as
$$(-1)^{m}\hspace{-2.5mm}\sum\limits_{\sigma \in {\mathcal S}_{n+1,m+1}}\hspace{-2.3mm} (-1)^{|\sigma|}\hspace{.6mm}{\widetilde{\sigma}_*} ((x_1\otimes1)\otimes \cdots \otimes (x_{n+1}\otimes1)\otimes (1\otimes y_{n+2})\otimes\cdots\otimes (1\otimes y_{n+m+2})).$$

\begin{proof} We first observe that a $(n+1,m+1)$ shuffle $\sigma$ satisfies either $\sigma(n+m+2)=n+m+2$ or $\sigma(n+1)=n+m+2$. Then the sum of the statement splits in two parts:
\begin{multline}\label{sumstatementshuffle}
(-1)^{m}\sum\limits_{\sigma(n+m+2)=n+m+2} (-1)^{{\varepsilon}(\sigma)}z_{\sigma^{-1}(1)}\otimes \cdots \otimes z_{\sigma^{-1}(n+m+2)}\\
+ (-1)^{m}\sum\limits_{\sigma(n+1)=n+m+2} (-1)^{{\varepsilon}(\sigma)}z_{\sigma^{-1}(1)}\otimes \cdots \otimes z_{\sigma^{-1}(n+m+2)}.
\end{multline}

These two parts can be identified with the two parts of the morphism described in Theorem~\ref{morhipsmafterdesusp}. Indeed, if $\sigma(n+m+2)=n+m+2$ then $\sigma$ is completely determined by the  $(n+1,m)$ shuffle $\theta$ given by $\theta(i)=\sigma(i)$ and we have $|\sigma|=|\theta|$ and
\begin{multline*}
s(\sigma,|x_1|,\ldots,|x_{n+1}|,|y_{n+2}|,\ldots,|y_{n+m+2}|)\\
=s(\theta,|x_1|,\ldots,|x_{n+1}|,|y_{n+2}|,\ldots,|y_{n+m+1}|).
\end{multline*}
Then we have
\begin{multline*}
(-1)^{m}\sum\limits_{\sigma(n+m+2)=n+m+2} (-1)^{{\varepsilon}(\sigma)}z_{\sigma^{-1}(1)}\otimes \cdots \otimes z_{\sigma^{-1}(n+m+2)}\\
= (-1)^{m}\sum\limits_{\theta \in {\mathcal S}_{n+1,m}} (-1)^{|\theta|}\,\widetilde{\theta}_*(z_1\otimes \cdots \otimes z_{n+m+1})\otimes (1\otimes y_{n+m+2}).
\end{multline*}

On the other hand, the first part of the morphism described in Theorem~\ref{morhipsmafterdesusp} applied to $x_1\otimes \cdots \otimes x_{n+1}\otimes y_{n+2}\otimes \cdots \otimes y_{n+m+2}$ gives
$$(-1)^{m}
\sum\limits_{\theta\in {\mathcal S}_{n+1,m}}(-1)^{|\theta|}(\rho\delta_{\theta})_*(x_1\otimes \cdots \otimes x_{n+1}\otimes 1\otimes y_{n+2}\otimes \cdots \otimes y_{n+m+2})$$
and a straightforward calculation gives
\begin{multline*}
(\rho\delta_{\theta})_*(x_1\otimes \cdots \otimes x_{n+1}\otimes 1\otimes y_{n+2}\otimes \cdots \otimes y_{n+m+2})\\
= \widetilde{\theta}_*(z_1\otimes \cdots \otimes z_{n+m+1})\otimes (1\otimes y_{n+m+2})
\end{multline*}
which identifies the first part of $(\ref{sumstatementshuffle})$ with the first part of the morphism of
Theorem~\ref{morhipsmafterdesusp}.

We now suppose that $\sigma(n+1)=n+m+2$. Then $\sigma$ is completely determined by the  $(n,m+1)$ shuffle $\theta$ given by $\theta(i)=\sigma(i)$ for $1\leq i\leq n$ and $\theta(i)=\sigma(i+1)$ for $n+1\leq i\leq n+m+1$. We have $|\sigma|=|\theta|+m+1$ and
\begin{multline*}
s(\sigma,|x_1|,\ldots,|x_{n+1}|,|y_{n+2}|,\ldots,|y_{n+m+2}|)\\
=s(\theta,|x_1|,\ldots,|x_{n}|,|y_{n+2}|,\ldots,|y_{n+m+2}|)+|x_{n+1}|\left(|y_{n+2}|+\cdots+|y_{n+m+2}|\right).
\end{multline*}
Writing $e=|x_{n+1}|(|y_{n+2}|+\cdots+|y_{n+m+2}|)$, the second part of $(\ref{sumstatementshuffle})$ becomes
\begin{align*}
& (-1)^{m}  \sum\limits_{\sigma(n+1)=n+m+2} (-1)^{{\varepsilon}(\sigma)}z_{\sigma^{-1}(1)}\otimes \cdots \otimes z_{\sigma^{-1}(n+m+2)}\\
 & = -\sum\limits_{\theta \in {\mathcal S}_{n,m+1}}(-1)^{\varepsilon(\theta)+ e}
  (z_{\sigma^{-1}(1)}\otimes \cdots \otimes z_{\sigma^{-1}(n+m+1)})\otimes(x_{n+1}\otimes 1)\\
 & = -\sum\limits_{\theta \in {\mathcal S}_{n,m+1}} (-1)^{|\theta|+e}\;{\widetilde{\theta}_*}
(z_1\otimes \cdots \otimes z_{n+m+1})\otimes(x_{n+1}\otimes 1)\\
& = -\sum\limits_{\theta \in {\mathcal S}_{n,m+1}} (-1)^{|\theta|}(\rho\delta_{\theta})_*(x_1\otimes \cdots \otimes x_{n+1}\otimes y_{n+2}\otimes \cdots \otimes y_{n+m+2}\otimes 1),
\end{align*}
which completes the proof.
\end{proof}

\red{We are now ready for:}

\begin{proof}[Proof of Theorem~\ref{eli}]
In terms of the functorial homotopy equivalence $$F_{n+m+1}(Z)=J^{n+m+1}(\Omega Z)\simeq\widetilde{\Sigma}^{n+m+1}(\Omega Z)^{\wedge n+m+2},$$ the map
$\bar{\chi}_{n+m+1}: F_{n+m+1}(S^p\times S^p) \to F_{n+m+1}(S^p)$ takes the form
$$\widetilde{\Sigma}^{n+m+1}  \bar{\chi}^{\wedge n+m+2}\colon (\Omega S^p\times \Omega S^p)^{\wedge n+m+2}\wedge\disfrac{\Delta^{n+m+1}}{\partial \Delta^{n+m+1}} \to (\Omega S^p)^{\wedge n+m+2}\wedge \disfrac{\Delta ^{n+m+1}}{\partial \Delta^{n+m+1}}.$$
Since this is a $(n+m+1)$-fold suspension, we have
$$s^{-({n+m+1})}(\bar{\chi}_{n+m+1}\circ \Phi_{n,m}^{\Omega S^p,\Omega S^p})_*=(\bar{\chi}_*)^{\otimes n+m+2} \circ s^{-({n+m+1})}(\Phi_{n,m}^{\Omega S^p,\Omega S^p})_{*}$$
where $\bar{\chi}:\Omega S^p\times \Omega S^p\to \Omega S^p$ is given by $(\alpha,\beta)\mapsto \alpha^{-1}\beta$. We take $R=\Z$. {Recall that} $H_*(\Omega S^p)=\Z[x]$ where $x$ is primitive with degree $|x|=p-1$, {and that} the morphism
$$\bar{\chi}_*:H_*(\Omega S^p \times\Omega S^p) \to H_*(\Omega S^p)$$
is given by $\bar{\chi}_*(x\times 1)=-x$ and   $\bar{\chi}_*(1\times x)=x$.

By Proposition \ref{shuffleinhomology}, we know that $s^{-({n+m+1})}(\Phi_{n,m}^{\Omega S^p,\Omega S^p})_{*}$ takes the generator $x^{\otimes n+m+2}$ of $H_*(\Omega S^p)^{\otimes n+m+2}$ to
$$(-1)^{m}\sum\limits_{\sigma \in {\mathcal S}_{n+1,m+1}} (-1)^{{\varepsilon}(\sigma)}z_{\sigma^{-1}(1)}\otimes \cdots \otimes z_{\sigma^{-1}(n+m+2)}$$
where $z_i=x\otimes 1$ for $1\leq i\leq n+1$ and $z_i=1\otimes x$ for $n+2\leq i\leq n+m+2$.

Since in each term there are exactly $n+1$ components $x\times 1$ we obtain
$$(\bar{\chi}_*)^{\otimes n+m+2} \circ s^{-({n+m+1})}(\Phi_{n,m}^{\Omega S^p,\Omega S^p})_{*}(x^{\otimes n+m+2})=\pm\hspace{-3mm}\sum\limits_{\sigma \in {\mathcal S}_{n+1,m+1}} (-1)^{{\varepsilon}(\sigma)}x^{\otimes n+m+2}.$$

Now, if $\sigma$ is a permutation of positive signature, we have, for some integer $k$ {(depending on $\sigma$),}
$${\varepsilon}(\sigma)=|\sigma|+s(\sigma,\underbrace{|x|,\ldots,|x|}_{n+m+2})=|\sigma|+2k|x||x|\equiv 0\quad mod \,\,2$$
while,  if $\sigma$ is a permutation of negative signature, we have, for some integer $k$ {(depending on $\sigma$),}
$${\varepsilon}(\sigma)=|\sigma|+s(\sigma,\underbrace{|x|,\ldots,|x|}_{n+m+2})=|\sigma|+(2k+1)|x||x|\equiv 1+|x|=p\quad mod \,\, 2.$$
This gives the result.
\end{proof}

\end{document}